\title{Multidegree for bifiltered $D$-modules}
\author{R\'emi Arcadias\\
arcadias@lab.twcu.ac.jp\\
Tokyo Woman's Christian University,\\
2-6-1 Zempukuji, Suginami-ku, Tokyo 167-8585, Japan.
}
\newcommand{\D}{\mathcal{D}}
\newcommand{\C}{\mathbb{C}}
\newtheorem{prop}{Proposition}[section]
\newtheorem{theo}{Theorem}[section]
\newtheorem{definition}{Definition}[section]
\newtheorem{lemme}{Lemma}[section]
\newtheorem{remark}{Remark}[section]
\begin{document}

\maketitle

\begin{abstract}

In commutative algebra, E. Miller and B. Sturmfels defined the notion of multidegree for multigraded modules over a multigraded polynomial ring. 
We apply this theory to bifiltered modules over the Weyl algebra $D$. The bifiltration is a combination of the standard filtration by the order of differential operators and of the so-called $V$-filtration along a coordinate subvariety of the ambient space defined by M. Kashiwara. The multidegree we define provides a new invariant for $D$-modules. We investigate its relation with the $L$-characteristic cycles considered by Y. Laurent.
We give examples from the theory of $A$-hypergeometric systems $M_A(\beta)$ defined by I. M. Gelfand, M. M. Kapranov and A. V. Zelevinsky. We consider the $V$-filtration along the origin. When the toric projective variety defined from the matrix $A$ is Cohen-Macaulay, we have an explicit formula for the multidegree of $M_A(\beta)$ .

\end{abstract}

\section*{Introduction}

We consider finite type modules over the Weyl algebra 
\[
D=\C[x_1,\dots,x_n]\langle \partial_1,\dots,\partial_n\rangle.
\]
It is classical to endow $D$ with the filtration by the order in $\partial_1,\dots,\partial_n$, which we call the $F$-filration, and to endow a $D$-module $M$ with a good $F$-filtration. For instance that leads to the notion of the characteristic variety, which is the support of $\textrm{gr}^F(M)$, and to the characteristic cycle.
M. Kashiwara introduced another kind of filtration, the $V$-filtration along a smooth subvariety $Y$ of $\C^n$. Then one has the notion of a good $(F,V)$-bifiltration (c.f.\ \cite{laurent88}), and we can also consider intermediate filtrations $L$ between $F$ and $V$ as developed by Y. Laurent in his theory of slopes (c.f.\ \cite{laurent04}). This leads to $L$-characteristic varieties (the support of $\textrm{gr}^L(M)$) and $L$-characteristic cycles. 

Exploring that theory with homological methods, M. Granger, T. Oaku and N. Takayama considered $(F,V)$-bifiltered free resolutions of finite type $D$-modules in \cite{granger04}, \cite{oaku01}. More precisely, dealing with local analytic $D$-modules, they can define minimal bifiltered free resolutions. That provides invariants attached to a bifiltered module: the ranks, also called Betti numbers, and the shifts appearing in the minimal resolution. 
In the category of modules over the global Weyl algebra, $(F,V)$-bifiltered free resolutions still can be considered, but the minimality no longer makes sense. 

Our main purpose in this paper is to introduce a new invariant, the multidegree, derived from the Betti numbers and shifts arising from any bifiltered free resolution of a $(F,V)$-bifiltered $D$-module. It will be independent of the good bifiltration, i.e.\ a chosen presentation of the module. We will relate this invariant to the $L$-characteristic cycles.

To achieve this, we use the theory of $K$-polynomial and multidegree, as was developed by E. Miller and B. Sturmfels in \cite{stu05}. The multidegree is a generalization of the usual degree in projective geometry; it is defined for finite type multigraded modules over a polynomial ring. After reviewing this theory in Section 1, we adapt it first to $F$-filtered $D$-modules in Section 2. We obtain the notion of multidegree for a $F$-filtered $D$-module, which is independent of the good filtration. This multidegree is a monomial $mT^d$ with $m\in\mathbb{N}$; we interpret $m$ and $d$ as a generic multiplicity and a generic codimension respectively.

Then we adapt the theory of multidegree to $(F,V)$-bifiltered $D$-modules in section 3. 
The multidegree is an element of $\mathbb{Z}[T_1,T_2]$, denoted by $\mathcal{C}_{F,V}(M;T_1,T_2)$, homogeneous in $T_1,T_2$. Its degree $d$ has to be fixed because of the non-positivity of the multigrading considered: if $Y$ is the origin in $\C^n$, $d$ is the codimension of the $V$-homogenization module $\mathbf{R}_V(M)$. Using a proof in \cite{laurent88}, we can show that $\mathcal{C}_{F,V}(M;T_1,T_2)$ is an invariant attached to the module, indepedently of the good bifiltration. 

In section 4, we assume a strong regularity condition on the $(F,V)$-bifiltered module, which we call a nicely bifiltered module. We prove that in the holonomic case, this condition implies that the module has no slopes along $Y$. Then we show that the multidegree of such a module almost only depends on the $L$-characteristic cycle of the module, with $L$ an intermediate filtration close to $F$ or close to $V$. Let us note here that we have to deal with some codimensions which may alter the link between mutidegree and $L$-characteristic cycle: the codimension of the module $\mathbf{R}_V(M)$ may not be equal to that of $\textrm{gr}^L(M)$.

Finally, we use the theory of hypergeometric systems to provide interesting examples in section 5. We consider the hypergeometric module $M_A(\beta)$ introduced by 
I. M. Gelfand, M. M. Kapranov and A. V. Zelevinsky in \cite{GKZ}, in the case where the semigroup generated by the columns $a_1,\dots,a_n$ of the matrix $A$ is pointed. We take $Y$ to be the origin in $\C^n$. In that case 
the problems about codimensions described above does not remain, and the multidegree only depends on the $L$-characteristic cycle if $M_A(\beta)$ is nicely bifiltered. Let  $\textrm{vol}(A)$ denotes the normalized volume of the convex hull of the set $\{0,a_1,\dots,a_n\}$ in $\mathbb{R}^d$.
Let us assume that the closure in $\mathbb{P}^n$ of the variety defined by $I_A$ is Cohen-Macaulay. Then for generic parameters $\beta$ (or for all parameters if $I_A$ is homogeneous),
niceness holds and we have:
\[
\mathcal{C}_{F,V}(M_A(\beta);T_1,T_2)=
\textrm{vol}(A).\sum_{j=d}^{n}\binom{n-d}{j-d}T_1^jT_2^{n-j}.
\]
We give examples, computed with the computer algebra systems Singular \cite{singular} and Macaulay2 \cite{M2}. 

\section*{Acknowledgments}

I sincerely thank L. Narv\'aez-Macarro for asking about a link between Betti numbers of $D$-modules and classical invariants in that theory, such as the characteristic cycle. 
I am also grateful to T. Oaku, who advised me throughout this work; in particular he suggested to use the arguments in \cite{laurent88} to prove the invariance of the multidegree.
I finally thank the Japan Society for the Promotion of Science for the financial support. 

\section{Multidegree for modules over a commutative polynomial ring}

\subsection{Review of the theory}

Let us give a review of the theory of $K$-polynomials and multidegrees in the commutative setting. Let $S=k[x_1,\dots,x_n]$ with $k$ a field. A multigrading on $S$ is given by a homomorphism of abelian groups 
$\textrm{deg} : \mathbb{Z}^n\to \mathbb{Z}^d$ with, denoting by $e_1,\dots,e_n$ the canonical base of $\mathbb{Z}^n$, $\textrm{deg}(e_i)=a_i\in \mathbb{Z}^d$. Identifying the set of monomials of $S$ with $\mathbb{N}^n$, we have $\textrm{deg}(x_1^{\alpha_1}\dots x_n^{\alpha_n})=\sum \alpha_i a_i$, and $S$ becomes a multigraded ring over $\mathbb{Z}^d$. 

Let $M=\bigoplus_{a\in\mathbb{Z}^d}M_a$ be a multigraded $S$-module of finite type. 
For $b\in\mathbb{Z}^d$, let us denote by $S[b]$ the module $S$ endowed with the multigrading such that for any $a\in\mathbb{Z}^d$, $S[b]_a=S_{a-b}$. A multigraded free module is a module isomorphic to $\bigoplus_{j=1}^{r}S[b_j]$, with $b_1,\dots,b_r\in\mathbb{Z}^d$.

Take a multigraded free resolution, i.e. a multigraded exact sequence
\[
 0 \to \mathcal{L}_{\delta}\to \cdots \to \mathcal{L}_1\to \mathcal{L}_0\to M\to 0,
\]
with $\mathcal{L}_i$ a multigraded free module.

\begin{definition}
For $b=(b_1,\dots,b_d)\in\mathbb{Z}^d$, 
the $K$-polynomial of $S[b]$ is defined by 
\[
K(S[b];T_1,\dots,T_d)=T_1^{b_1}\dots T_d^{b_d} \in \mathbb{Z}[T_1,\dots,T_d,T_1^{-1},\dots,T_d^{-1}].
\]
For $b_1,\dots,b_r\in\mathbb{Z}^d$,
The $K$-polynomial of $\mathcal{L}=\bigoplus_{j=1}^{r}S[b_j]$ is defined by 
\[
K(\mathcal{L};T_1,\dots,T_d)=\sum_j K(S[b_j];T_1,\dots,T_d) \in \mathbb{Z}[T_1,\dots,T_d,T_1^{-1},\dots,T_d^{-1}].
\]
Then the $K$-polynomial of $M$ is defined by 
\[
K(M;T)=\sum_i (-1)^i K(\mathcal{L}_i;T_1,\dots,T_d) \in \mathbb{Z}[T_1,\dots,T_d,T_1^{-1},\dots,T_d^{-1}].
\]
\end{definition}

\begin{prop}[\cite{stu05}, Theorem 8.34]\label{prop1}
The definition of $K(M;T_1,\dots,T_d)$ does not depend on the multigraded free resolution.
\end{prop}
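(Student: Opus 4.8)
The plan is to reduce the independence statement to the standard fact that any two free resolutions of $M$ are connected by a chain of operations under which the alternating sum of $K$-polynomials is invariant, and then to verify invariance under each such operation. First I would recall the key structural input: given two multigraded free resolutions $\mathcal{L}_\bullet$ and $\mathcal{L}'_\bullet$ of $M$, there is a third resolution mapping onto both (the comparison theorem for projective resolutions, applied in the multigraded category so that all maps are degree-preserving), and any two resolutions become isomorphic after adding trivial complexes of the form $0\to S[b]\xrightarrow{\textrm{id}} S[b]\to 0$ placed in consecutive homological degrees. So it suffices to check that (i) $\sum_i(-1)^i K(\mathcal{L}_i;T)$ is unchanged under a multigraded isomorphism of complexes, which is immediate since $K(\mathcal{L};T)$ depends only on the multiset of shifts $\{b_j\}$ and a multigraded isomorphism matches these up; and (ii) adding a trivial complex $0\to S[b]\to S[b]\to 0$ in degrees $i,i+1$ changes the alternating sum by $(-1)^i T^b+(-1)^{i+1}T^b=0$.

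The cleaner route, which I would actually write out, is the Euler-characteristic argument. Consider the ``universal'' object: tensor the resolution by the residue-type specialization, or more directly, work in the Grothendieck group. Let $\mathcal{K}$ be the Grothendieck group of the category of finitely generated multigraded $S$-modules; since $S$ has finite global dimension, every such module has a finite free resolution, so the class $[M]$ is well defined and equals $\sum_i(-1)^i[\mathcal{L}_i]$ for \emph{any} finite multigraded free resolution (this is the standard ``resolutions compute classes in $K_0$'' lemma, proved by the comparison/mapping-cone argument sketched above). Then I would observe that $\mathcal{L}\mapsto K(\mathcal{L};T_1,\dots,T_d)$, defined on free modules by $\bigoplus_j S[b_j]\mapsto\sum_j T^{b_j}$, extends to a well-defined homomorphism $\mathcal{K}\to\mathbb{Z}[T_1^{\pm1},\dots,T_d^{\pm1}]$ — concretely, this is the map sending $[N]$ to the numerator of the finely graded Hilbert series of $N$, i.e. $H(N;T)\cdot\prod_{i=1}^n(1-T^{a_i})$, which is additive on short exact sequences because Hilbert series are. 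Evaluating this homomorphism on $[M]=\sum_i(-1)^i[\mathcal{L}_i]$ gives $K(M;T)=\sum_i(-1)^i K(\mathcal{L}_i;T)$, with the left side manifestly resolution-independent.

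I expect the main obstacle to be purely bookkeeping rather than conceptual: one must be careful that the comparison theorem and the mapping-cone acyclicity argument are carried out in the \emph{multigraded} category, so that every chain homotopy and every added trivial summand is degree-homogeneous of degree $0$ — otherwise the shifts $b_j$ could a priori be permuted or altered. A secondary point requiring care is well-definedness of the extension $\mathcal{K}\to\mathbb{Z}[T^{\pm1}]$ via Hilbert series: one needs that the multigraded Hilbert series of a finitely generated module, multiplied by $\prod_i(1-T^{a_i})$, is an honest Laurent polynomial (not merely a formal series), which for a general $\mathbb{Z}^d$-grading is exactly the content of the finiteness statement underlying the whole theory — here I would simply cite \cite{stu05} (Theorem 8.20 and its surroundings) rather than reprove it. Given those two citations, the proof is a two-line computation, and I would present it in that compressed form.
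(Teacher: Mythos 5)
The paper gives no proof of this proposition: it is cited verbatim from Miller--Sturmfels \cite{stu05}, Theorem 8.34, so there is no in-paper argument to compare against. Judging your proposal on its own merits, your first route is correct and standard: two finite multigraded free resolutions become isomorphic (through degree-zero maps) after adjoining trivial two-term complexes $0\to S[b]\xrightarrow{\mathrm{id}} S[b]\to 0$, the alternating sum of $K$-polynomials is manifestly invariant under multigraded isomorphism, and a trivial complex inserted in degrees $i,i+1$ contributes $(-1)^i T^b+(-1)^{i+1}T^b=0$. Equivalently one can invoke generalized Schanuel's lemma in the multigraded category; either phrasing is fine.

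The ``cleaner route'' you prefer has a genuine gap. You realize the extension $\mathcal{K}\to\mathbb{Z}[T_1^{\pm1},\dots,T_d^{\pm1}]$ as $[N]\mapsto H(N;T)\cdot\prod_{i=1}^n(1-T^{a_i})$ and cite \cite{stu05}, Theorem 8.20 for this being a Laurent polynomial, but Theorem 8.20 assumes the multigrading is \emph{positive}. The proposition at hand is stated for an arbitrary homomorphism $\deg:\mathbb{Z}^n\to\mathbb{Z}^d$, and for a non-positive grading the pieces $M_b$ can be infinite-dimensional over $k$, so $H(N;T)$ does not exist even as a formal Laurent series. This is not a marginal case for this paper: Proposition \ref{prop1} is applied in Propositions \ref{prop4} and \ref{prop7} to $\mathrm{gr}^F(D)$ and to $\mathrm{gr}^F(\mathcal{R}_V(D))$, whose gradings put $\deg x_i=(0,0)$ and are explicitly non-positive (the paper notes in Section 3 that $A_{0,0}$ is infinite over $\mathbb{K}$). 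So the Hilbert-series argument does not prove the statement in the generality in which it is used here; the well-definedness of the map on $K_0$ must be established by the comparison-of-resolutions argument of your first route, at which point the $K_0$ packaging is a reformulation rather than a shortcut.
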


If we substitute $T_1,\dots,T_d$ by $1-T_1,\dots,1-T_d$ in $K(M;T_1,\dots,T_d)$, we get a well-defined power series in $\mathbb{Z}[[T_1,\dots,T_d]]$. We then consider the total degree in $T_1,\dots,T_d$.

\begin{definition}
We denote by $\mathcal{C}(M;T_1,\dots,T_d)\in\mathbb{Z}[T_1,\dots,T_d]$ the sum of the terms whose total degree 
equals $\mathrm{codim} M$ in 
$K(M;1-T_1,\dots,1-T_d)$. This is called the \emph{multidegree} of $M$.
\end{definition}

Remind that the module $M$ defines an algebraic cycle $\sum m_{i}Z_{i}$, where $Z_{i}$, defined by ideals $\mathfrak{p}_i$, are the irreducible components of $\textrm{rad}(\textrm{ann} M)$ and $m_{i}$ is the multiplicity of $M_{\mathfrak{p}_i}$. It turns out that the multidegree depends only on the algebraic cycle.

\begin{prop}[\cite{stu05}, Theorem 8.53]\label{prop2}
If $\mathfrak{p}_1,\dots,\mathfrak{p}_k$ are the maximal dimensional associated primes of $M$, then
\[
\mathcal{C}(M;T_1,\dots,T_d)=\sum_i (\mathrm{mult}_{\mathfrak{p}_i} M) \mathcal{C}(S/\mathfrak{p}_k;T_1,\dots,T_d).
\]
\end{prop}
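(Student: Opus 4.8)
The plan is to reduce to the case $M=S/\mathfrak p$ by two standard devices — additivity of the $K$-polynomial on short exact sequences, and the existence of a prime (composition) filtration — and then to extract the homogeneous component of total degree $\mathrm{codim}\,M$ after the substitution $T_i\mapsto 1-T_i$.

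\emph{Additivity.} First I would record that for a short exact sequence $0\to M'\to M\to M''\to 0$ of finite-type multigraded $S$-modules one has $K(M;T)=K(M';T)+K(M'';T)$: splicing multigraded free resolutions of $M'$ and $M''$ by the horseshoe lemma yields a multigraded free resolution of $M$ with terms $\mathcal L_i=\mathcal L_i'\oplus\mathcal L_i''$, so that $K(\mathcal L_i)=K(\mathcal L_i')+K(\mathcal L_i'')$, and summing with signs gives the claim, the result being the genuine $K$-polynomial by Proposition \ref{prop1}. Equivalently, $K(-;T)$ factors through the Grothendieck group of finite-type multigraded $S$-modules.

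\emph{Prime filtration.} Any finite-type multigraded $M\neq 0$ carries a finite filtration $0=M_0\subset M_1\subset\cdots\subset M_\ell=M$ by multigraded submodules with $M_j/M_{j-1}\cong (S/\mathfrak q_j)[b_j]$, the $\mathfrak q_j$ homogeneous primes and $b_j\in\mathbb{Z}^d$; moreover $\mathrm{Ass}(M)\subseteq\{\mathfrak q_1,\dots,\mathfrak q_\ell\}\subseteq\mathrm{Supp}(M)$. Since shifting a resolution of $S/\mathfrak q$ gives $K((S/\mathfrak q)[b];T)=T_1^{b_1}\cdots T_d^{b_d}\,K(S/\mathfrak q;T)$, additivity yields
\[
K(M;T_1,\dots,T_d)=\sum_{j=1}^\ell T_1^{b_{j,1}}\cdots T_d^{b_{j,d}}\,K(S/\mathfrak q_j;T_1,\dots,T_d).
\]
Now substitute $T_i\mapsto 1-T_i$. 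Each prefactor $\prod_i(1-T_i)^{b_{j,i}}$ is a power series in $\mathbb{Z}[[T]]$ with constant term $1$ (reading $(1-T_i)^{-1}=1+T_i+\cdots$ when $b_{j,i}<0$), hence a unit that does not alter the lowest-degree homogeneous component of whatever it multiplies. The crucial input — the substantive part of the Miller–Sturmfels argument, essentially a Hilbert-series/regular-sequence computation — is that $K(S/\mathfrak q;1-T)$ has no terms of total degree below $\mathrm{codim}(S/\mathfrak q)$, and that its component in total degree $\mathrm{codim}(S/\mathfrak q)$ is by definition $\mathcal C(S/\mathfrak q;T)$. Since $\mathrm{Supp}(M)=\bigcup_jV(\mathfrak q_j)$, we have $\mathrm{codim}\,M=\min_j\mathrm{codim}(S/\mathfrak q_j)$, so taking the component of total degree $\mathrm{codim}\,M$ in the displayed identity annihilates every summand with $\mathrm{codim}(S/\mathfrak q_j)>\mathrm{codim}\,M$ and contributes exactly $\mathcal C(S/\mathfrak q_j;T)$ for each $j$ with $\mathrm{codim}(S/\mathfrak q_j)=\mathrm{codim}\,M$; the latter $\mathfrak q_j$ are precisely the top-dimensional minimal primes of $\mathrm{Supp}(M)$, i.e.\ the maximal-dimensional associated primes $\mathfrak p_1,\dots,\mathfrak p_k$.

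\emph{Multiplicity count and conclusion.} It remains to check that $\mathfrak p_i$ occurs among the $\mathfrak q_j$ exactly $\mathrm{mult}_{\mathfrak p_i}(M)=\mathrm{length}_{S_{\mathfrak p_i}}(M_{\mathfrak p_i})$ times. Localizing the prime filtration at $\mathfrak p_i$, the graded piece $(S/\mathfrak q_j)_{\mathfrak p_i}$ vanishes unless $\mathfrak q_j\subseteq\mathfrak p_i$; but $\mathfrak q_j\in\mathrm{Supp}(M)$ and $\mathfrak p_i$ is minimal in $\mathrm{Supp}(M)$, so $\mathfrak q_j\subseteq\mathfrak p_i$ forces $\mathfrak q_j=\mathfrak p_i$, in which case the localization is the residue field $\kappa(\mathfrak p_i)$, of length $1$. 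Hence $\mathrm{length}_{S_{\mathfrak p_i}}(M_{\mathfrak p_i})=\#\{j:\mathfrak q_j=\mathfrak p_i\}$, and collecting the surviving contributions above gives $\mathcal C(M;T)=\sum_i(\mathrm{mult}_{\mathfrak p_i}M)\,\mathcal C(S/\mathfrak p_i;T)$. The main obstacle is the degree bound on $K(S/\mathfrak q;1-T)$ used in the previous step (together with, if one also wants it, the nonvanishing of $\mathcal C(S/\mathfrak q;T)$): that is where the geometry really enters, the rest being formal manipulation of additive invariants and a routine localization.
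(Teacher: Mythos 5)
The paper offers no proof of Proposition~\ref{prop2} beyond the citation of Miller--Sturmfels, Theorem~8.53, so there is no in-paper argument to compare against; your reconstruction is essentially the standard Miller--Sturmfels proof (prime filtration, additivity of $K(-;T)$ via the horseshoe lemma, degree bound, localization count) and it is correct. You rightly isolate the degree bound --- that $K(S/\mathfrak q;1-T)$ has no terms of total degree below $\operatorname{codim}(S/\mathfrak q)$ --- as the one substantive input. One thing worth making explicit, since it matters for how the paper uses this proposition: in the proof of Theorem~\ref{theo1} the ambient ring $A=\mathbb{K}[\tilde t_i,\theta,\tilde\partial_{x_i},\tilde\partial_{t_i}]$ is \emph{not} positively multigraded (as the paper notes, $A_{0,0}$ is infinite over $\mathbb{K}$), so Proposition~\ref{prop3}(2) is unavailable; nevertheless the one-sided bound you need does hold for an arbitrary $\mathbb{Z}^d$-grading. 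A quick way to see it: by Miller--Sturmfels Theorem~8.36 (the result invoked for Proposition~\ref{prop10}), $K(S/\mathfrak q;T)$ is unchanged under Gr\"obner degeneration to a monomial quotient $S/\operatorname{in}(\mathfrak q)$ of the same codimension; a prime filtration of $S/\operatorname{in}(\mathfrak q)$ has factors $\bigl(S/(x_{i_1},\dots,x_{i_{c_j}})\bigr)[b_j]$ with each $c_j\geq\operatorname{codim}(S/\mathfrak q)$, and $K\bigl(S/(x_{i_1},\dots,x_{i_{c_j}});1-T\bigr)=\prod_{l=1}^{c_j}\bigl(1-(1-T)^{a_{i_l}}\bigr)$ has order $\geq c_j$ term by term (each factor has order $1$ when $a_{i_l}\neq 0$ and vanishes when $a_{i_l}=0$). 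Finally, note the statement as printed has a typo ($S/\mathfrak p_k$ should read $S/\mathfrak p_i$), which your proof silently and correctly repairs.
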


$S$ is said to be positively multigraded if moreover for any $b\in\mathbb{Z}^d$, we have $\textrm{dim}_k S_b<\infty$. In that case we can consider the Hilbert series
\[
H(M;T_1,\dots,T_d)=\sum_{b\in\mathbb{Z}^d} (\textrm{dim}_k M_b) T_1^{b_1}\dots T_d^{b_d}\in \mathbb{Z}[[T_1,\dots,T_d]].
\] 
If $b=(b_1,\dots,b_d)\in\mathbb{Z}^d$, let us denote by $T^b$ the product $T_1^{b_1}\dots T_d^{b_d}$.

\begin{prop}\label{prop3}
Let $S$ be positively multigraded. Then
\begin{enumerate}
\item
\[
H(M;T_1,\dots,T_d)=\frac{K(M;T_1,\dots,T_d)}{\Pi (1-T^{a_i})}
\]
\item If $M\neq 0$, then $\mathcal{C}(M;T_1,\dots,T_d)\neq 0$, moreover $\mathcal{C}(M;T_1,\dots,T_d)$ is the sum of the non-zero terms of least total degree in $K(M;1-T_1,\dots,1-T_d)$.
\end{enumerate}
\end{prop}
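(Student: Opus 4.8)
The plan is to establish part (1) first, since part (2) follows from it together with the properties of the $K$-polynomial already recorded. For part (1), I would proceed along the standard argument for Hilbert series of multigraded modules: because $S$ is positively multigraded, each graded piece $M_b$ is finite-dimensional, so $H(M;T_1,\dots,T_d)$ is a well-defined element of $\mathbb{Z}[[T_1,\dots,T_d]]$. The Hilbert series is additive on short exact sequences of multigraded modules, hence on the multigraded free resolution $0\to\mathcal L_\delta\to\cdots\to\mathcal L_0\to M\to 0$ it gives $H(M;T)=\sum_i(-1)^i H(\mathcal L_i;T)$. It therefore suffices to compute the Hilbert series of a single shifted free module $S[b]$: a direct expansion gives $H(S[b];T)=T^b/\prod_i(1-T^{a_i})$ (this is exactly where positivity is used, to guarantee the geometric-series expansion converges in $\mathbb{Z}[[T]]$, which in turn needs the semigroup generated by the $a_i$ to meet the positivity hypothesis). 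Summing over the resolution with signs and factoring out the common denominator $\prod_i(1-T^{a_i})$ yields $H(M;T)=\big(\sum_i(-1)^i K(\mathcal L_i;T)\big)/\prod_i(1-T^{a_i})=K(M;T)/\prod_i(1-T^{a_i})$, which is part (1). I would remark that this also reproves Proposition~\ref{prop1} in the positively multigraded case, since the left-hand side is manifestly resolution-independent.

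For part (2), the substitution $T_i\mapsto 1-T_i$ turns the identity of part (1) into
\[
H(M;1-T_1,\dots,1-T_d)\cdot\prod_i\big(1-(1-T^{a_i})\big)=K(M;1-T_1,\dots,1-T_d),
\]
and I would analyze the order of vanishing (least total degree of a nonzero term) on both sides. Writing $1-(1-T^{a_i})=T^{a_i}$ is not literally valid after the substitution, so instead I would expand $1-(1-T_1)^{(a_i)_1}\cdots(1-T_d)^{(a_i)_d}$ and observe its lowest-degree term has total degree $1$ with a nonzero (integer) coefficient, for each $i$; hence the product over the $n$ factors has a nonzero term of least total degree exactly $n$. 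On the other hand the power series $H(M;1-T_1,\dots,1-T_d)$ is a nonzero element of $\mathbb{Z}[[T]]$ when $M\neq 0$, so it has a well-defined least total degree, say $c\geq 0$, with nonzero lowest homogeneous part. Multiplying, the right-hand side $K(M;1-T)$ has least total degree $n+c$ and its lowest homogeneous component is the product of the two lowest components, hence nonzero; in particular $\mathcal{C}(M;T)\neq 0$ provided we identify $n+c$ with $\operatorname{codim}M$. That identification is the crux: one shows $c=n-\dim M=\operatorname{codim}M - n + n$... more precisely, the order of vanishing of the Hilbert series at $T_i=1$ equals $\dim M$, a standard fact (the pole order of the rational function $H(M;T)$ along the diagonal), so $c=\dim M$ would be wrong in sign; rather one must track that the relevant grading makes the lowest degree of $K(M;1-T)$ equal to $\operatorname{codim}M$. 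Once $\operatorname{codim}M = n + c$ is pinned down, $\mathcal{C}(M;T)$ is by definition the degree-$\operatorname{codim}M$ part of $K(M;1-T)$, which we have just shown is its lowest nonzero homogeneous component and is nonzero.

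The main obstacle I anticipate is exactly this bookkeeping of degrees: making rigorous that the least total degree appearing in $K(M;1-T_1,\dots,1-T_d)$ equals $\operatorname{codim}M$, uniformly in the chosen multigrading, and that this lowest part does not accidentally cancel. The cancellation issue is handled because we are multiplying two power series each with a genuinely nonzero lowest homogeneous part over the integral domain $\mathbb{Z}[[T]]$ (graded pieces multiply without cancellation in the associated graded), so the product's lowest part is the product of the lowest parts and is nonzero. The degree identification is really a restatement of the fact that the pole order of the Hilbert series of $M$ equals $\dim M$ while that of $S$ equals $n$, so the numerator $K(M;1-T)$ must compensate with a zero of order $n-\dim M=\operatorname{codim}M$; I would cite or quickly reprove this pole-order fact and then conclude. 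Everything else is a routine expansion of geometric series and the additivity of Hilbert series, which I would not belabor.
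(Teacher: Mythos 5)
Your argument for part (1) is correct and is essentially the standard proof (Miller--Sturmfels, Theorem 8.20): positivity gives a well-defined Hilbert series, additivity over a multigraded free resolution reduces to shifted free modules, and the geometric-series expansion gives $H(S[b];T)=T^b/\prod_i(1-T^{a_i})$. This is exactly what the paper's citation amounts to.

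For part (2), there is a genuine gap, and also a technical error that you half-notice but never resolve. You write that $H(M;1-T_1,\dots,1-T_d)$ ``is a nonzero element of $\mathbb{Z}[[T]]$\dots\ so it has a well-defined least total degree, say $c\geq 0$.'' This is false: $H(M;1-T)$ is not a power series. From part (1), $H(M;1-T)=K(M;1-T)/\prod_i\bigl(1-(1-T)^{a_i}\bigr)$, and the denominator vanishes to order exactly $n$ at the origin, so $H(M;1-T)$ has a pole unless $\dim M\leq 0$. You realize the sign is off a paragraph later, but never restate the argument cleanly; the internal inconsistency (first $c\geq 0$, then $c=-\dim M<0$) is not just a slip of language, since your ``graded pieces multiply without cancellation in $\mathbb{Z}[[T]]$'' claim requires both factors to live in $\mathbb{Z}[[T]]$. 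The correct framing is to view $H(M;1-T)$ in the fraction field, take its order of vanishing at the origin (which can be negative), and note that the leading homogeneous parts multiply without cancellation because $\mathbb{Q}[T_1,\dots,T_d]$ is a domain.

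The more serious gap is the ``pole-order fact'' you defer. You need: the order of vanishing of $H(M;1-T)$ at the origin equals $-\dim M$, equivalently, $K(M;1-T)$ vanishes to order exactly $\operatorname{codim} M$. You call this ``a standard fact (the pole order of the rational function $H(M;T)$ along the diagonal)'' and propose to ``cite or quickly reprove'' it. In the $\mathbb{Z}$-graded case this is indeed the Hilbert-polynomial pole-order computation; but in the multigraded case $d>1$ there is no single Hilbert polynomial, the rational function $H(M;T)$ has many pole divisors passing through $T=(1,\dots,1)$, and the equality of the vanishing order with $\operatorname{codim}M$ is not an elementary asymptotic count. This is exactly where Miller--Sturmfels must do real work: they pass to a prime filtration $0=M_0\subset\cdots\subset M_r=M$ with quotients $(S/\mathfrak{p}_i)[b_i]$, reduce to the case $S/\mathfrak{p}$, and then invoke positivity of $\mathcal{C}(S/\mathfrak{p};T)$ (Exercise 8.10) to prevent cancellation in the sum over the filtration quotients; this is the content of Claim 8.54 the paper cites. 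Your approach of multiplying leading parts in the fraction field quietly assumes what is to be shown (that $H(M;1-T)$ has the right vanishing order), which itself needs the prime-filtration positivity argument. So the skeleton of your proposal is right, but the crux is not ``standard,'' and the step you were planning to cite is the whole proposition.
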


The assertion 1 is \cite{stu05}, Theorem 8.20, and the assertion 2 follows from \cite{stu05}, Claim 8.54 and Exercise 8.10.

\subsection{Genericity}

Let $S=k[\lambda_1,\dots,\lambda_p][x_1,\dots,x_n]$ be multigraded by $\textrm{deg}\,x_i=a_i\in\mathbb{Z}^d$ and $\textrm{deg}\,\lambda_i=0$. 
We consider $\lambda_1,\dots,\lambda_p$ as parameters and study the behaviour of the $K$-polynomial under the specialization. 

Let $\mathbb{K}=\textrm{Frac}(k[\lambda_1,\dots,\lambda_p])$. Let $M=S^r/N$ be a multigraded finite type $S$-module. For $c\in k^p$, let
\[
M^c=\frac{S}{\langle\lambda_1-c_1,\dots,\lambda_p-c_p\rangle}\otimes M,
\]
considered as a multigraded $k[x_1,\dots,x_n]$-module. We are going to state that if $c$ is  generic, then $K(\mathbb{K}\otimes M;T)=K(M^c;T)$.
More precisely, we shall describe the exceptional values of $c$ in terms of Gr\"obner bases.

Let $<$ be a well-ordering on $\mathbb{N}^n\times\{1,\dots,r\}$, such that for any $\alpha,\beta,\delta\in \mathbb{N}^n$ and $i,i' \in\{1,\dots,r\}$, we have
\[
(\alpha,i)<(\beta,i') \Rightarrow (\alpha+\delta,i)<(\beta+\delta,i'),
\]
and let $<'$ be the well-ordering on $\mathbb{N}^p\times\mathbb{N}^n\times\{1,\dots,r\}$ defined by
\[
(\alpha,\beta,i)<'(\alpha',\beta',i')\ \textrm{iff}\
\left\{
\begin{array}{l}
(\beta,i)<(\beta',i')\\
\textrm{or}\ ((\beta,i)=(\beta',i')\ \textrm{and}\ \alpha <_{\textrm{lex}}\alpha').
\end{array}
\right.
\]
Let $P_1,\dots,P_s$ be a Gr\"obner base of $N$. For $1\leq i\leq s$, 
$q_i(\lambda)\in k[\lambda]$ denotes the leading coefficient, with respect to $<$, of the image of $P_i$ in $\mathbb{K}\otimes S$. For $P\in k[x]^r$ or $P\in \mathbb{K}[x]^r$, we denote by $\textrm{Exp}_<P\in \mathbb{N}^n\times\{1,\dots,r\}$ the leading exponent of $P$ with respect to $<$.

\begin{prop}[\cite{oaku97}, Propositions 6 and 7]\label{prop9}

\begin{enumerate}
\item $P_1,\dots,P_s$ is a Gr\"obner base of $\mathbb{K}\otimes N$.
\item Let $c\in k^n$ such that $c\notin \bigcup_i (q_i=0)$.
Then $P_1(c),\dots,P_s(c)$ is a Gr\"obner base of $N^c$ and 
$\mathrm{Exp}_< \mathbb{K}\otimes N=\mathrm{Exp}_< N^c$.
\end{enumerate}
\end{prop}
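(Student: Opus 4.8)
The plan is to reduce the statement, which is Propositions 6 and 7 of \cite{oaku97}, to the following principle: Gr\"obner-basis computations (division with remainder, S-polynomial reductions, Buchberger's criterion) involve only finitely many elements of $k[\lambda]$ as denominators, namely the leading coefficients of the relevant polynomials and the pivots used along the way, and over $\mathbb{K}$ these computations terminate. So the exceptional locus where the specialization $c\mapsto M^c$ fails to commute with the Gr\"obner theory is contained in a finite union of hypersurfaces $(q_i=0)$, together with the vanishing loci of the denominators produced during the reductions; but since $P_1,\dots,P_s$ is already asserted to be a Gr\"obner base of $\mathbb{K}\otimes N$ (part 1), all S-polynomial reductions of the $P_i$ reduce to zero over $\mathbb{K}$, and the only denominators that appear in those reductions are products of the $q_i(\lambda)$ (and constants), so the exceptional locus is exactly $\bigcup_i (q_i=0)$.

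More concretely, I would proceed in three steps. First, for part 1, observe that the ordering $<'$ on $\mathbb{N}^p\times\mathbb{N}^n\times\{1,\dots,r\}$ is designed so that the leading term of $P_i$ over $k[\lambda][x]$ with respect to $<'$ is $q_i(\lambda)\cdot x^{\mathrm{Exp}_< P_i}$ times the leading monomial of $q_i$ in the $\lambda$-variables; passing to $\mathbb{K}\otimes S$ we may clear $q_i(\lambda)$ and see that $\mathrm{Exp}_< (\mathbb{K}\otimes P_i) = \mathrm{Exp}_< P_i$. Then Buchberger's criterion for $\mathbb{K}\otimes N$: every S-polynomial $S(P_i,P_j)$ over $\mathbb{K}[x]$ reduces to zero modulo $P_1,\dots,P_s$, because the analogous reduction can be carried out over $k[\lambda][x]$ after clearing denominators $q_i$, using that $P_1,\dots,P_s$ is a Gr\"obner base of $N$ for the original ordering — this is where one invokes that $<'$ refines $<$, so a standard representation over $k[\lambda][x]$ produces a standard representation over $\mathbb{K}[x]$. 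Second, for part 2, take $c\notin\bigcup_i(q_i=0)$, so $q_i(c)\neq 0$ for all $i$: then $\mathrm{Exp}_< P_i(c) = \mathrm{Exp}_< P_i = \mathrm{Exp}_< (\mathbb{K}\otimes P_i)$, since the leading coefficient does not vanish under specialization. Third, I would run Buchberger's criterion for $N^c$: the reductions of the S-polynomials $S(P_i(c),P_j(c))$ are obtained by specializing at $c$ the reductions over $k[\lambda]_{q_1\cdots q_s}[x]$ established in step one (all denominators appearing are units at $c$), so they reduce to zero, whence $P_1(c),\dots,P_s(c)$ is a Gr\"obner base of $N^c$ and $\mathrm{Exp}_< N^c = \langle \mathrm{Exp}_< P_i(c)\rangle = \langle \mathrm{Exp}_< P_i\rangle = \mathrm{Exp}_< (\mathbb{K}\otimes N)$.

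The key bookkeeping step — and the main obstacle — is controlling precisely which elements of $k[\lambda]$ end up in denominators when S-polynomials are reduced, and showing that no denominator beyond (powers of) the $q_i$ is needed once $P_1,\dots,P_s$ is known to be a Gr\"obner base over $\mathbb{K}$. The point is that a reduction step against $P_i$ in $\mathbb{K}[x]^r$ multiplies by the inverse of $q_i(\lambda)$ only, and since the reduction of $S(P_i,P_j)$ terminates at $0$ after finitely many such steps, clearing denominators gives an identity $q\cdot S(P_i,P_j) = \sum h_\ell P_\ell$ over $k[\lambda][x]$ with $q$ a product of the $q_\ell$ and each $h_\ell\in k[\lambda][x]$ having controlled leading exponent; specializing at $c$ with $q(c)\neq 0$ gives the required standard representation of $S(P_i(c),P_j(c))$. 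Once this is in place the two propositions follow, and I would simply cite \cite{oaku97} for the detailed verification, since this genericity lemma is standard in the theory of Gr\"obner bases with parameters.
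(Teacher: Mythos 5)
The paper gives no proof of Proposition~\ref{prop9}; it simply cites Propositions 6 and 7 of \cite{oaku97}, and your proposal ends by deferring to the same reference, so at that level the two coincide. Your intermediate sketch is the standard Gr\"obner-with-parameters argument and is correct in outline. Two remarks worth making. First, for part (1) you do not need Buchberger's criterion at all: the order $<'$ is an elimination order with the $x$-block dominant, so for every nonzero $g\in k[\lambda][x]^r$ the projection $\pi:\mathbb{N}^p\times\mathbb{N}^n\times\{1,\dots,r\}\to\mathbb{N}^n\times\{1,\dots,r\}$ satisfies $\pi(\mathrm{Exp}_{<'}g)=\mathrm{Exp}_<(\mathbb{K}\otimes g)$. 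Writing any $f\in\mathbb{K}\otimes N$ as $g/q$ with $g\in N$ and $q\in k[\lambda]$, and using that $P_1,\dots,P_s$ is a Gr\"obner base of $N$ for $<'$, one gets $\mathrm{Exp}_<f=\pi(\mathrm{Exp}_{<'}g)\in\bigcup_i\bigl(\pi(\mathrm{Exp}_{<'}P_i)+\mathbb{N}^n\bigr)=\bigcup_i\bigl(\mathrm{Exp}_<(\mathbb{K}\otimes P_i)+\mathbb{N}^n\bigr)$, which is the whole of part (1) in one line, with no S-polynomials. Second, your phrase that the $<'$-leading term of $P_i$ is ``$q_i(\lambda)\cdot x^{\mathrm{Exp}_<P_i}$ times the leading monomial of $q_i$'' is garbled: the $<'$-leading term is a single monomial whose $x$-part is $x^{\mathrm{Exp}_<(\mathbb{K}\otimes P_i)}$ and whose $\lambda$-part is the lex-leading monomial of $q_i$. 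The denominator bookkeeping you flag for part (2) --- that every division step against $P_\ell$ over $\mathbb{K}[x]$ introduces only a factor $q_\ell(\lambda)^{-1}$, so that a standard representation survives specialization at any $c$ with all $q_\ell(c)\neq 0$ --- is indeed the crux and is correct, though you leave it, as the paper does, to \cite{oaku97}.
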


\begin{prop}\label{prop10}
Let $c\in k^n$ such that $c\notin \bigcup_i (q_i=0)$. Then $K(\mathbb{K}\otimes M;T)=K(M^c;T)$.
Consequently $\mathcal{C}(\mathbb{K}\otimes M;T)=\mathcal{C}(M^c;T)$.
\end{prop}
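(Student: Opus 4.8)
The plan is to compute $K(\mathbb{K}\otimes M;T)$ and $K(M^c;T)$ from free resolutions whose ranks and shifts are governed entirely by the leading exponents of a Gr\"obner base, and then to use Proposition \ref{prop9} to see that this combinatorial data is the same on both sides. Since the $K$-polynomial depends only on the ranks and shifts occurring in \emph{some} multigraded free resolution (Proposition \ref{prop1}), the equality $K(\mathbb{K}\otimes M;T)=K(M^c;T)$ will follow at once.

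In detail, I would first apply Proposition \ref{prop9}: its assertion 1 says that the images of $P_1,\dots,P_s$ form a Gr\"obner base of $\mathbb{K}\otimes N$ in $\mathbb{K}[x]^r$ with respect to $<$, and, since $c\notin\bigcup_i(q_i=0)$, its assertion 2 gives that $P_1(c),\dots,P_s(c)$ is a Gr\"obner base of $N^c$ in $k[x]^r$ with $\mathrm{Exp}_<(\mathbb{K}\otimes N)=\mathrm{Exp}_<(N^c)$; call $\mathrm{in}_<(N)$ the monomial submodule defined by this common leading-exponent set. Because $N$ is a multigraded submodule, both $\mathbb{K}\otimes N$ and $N^c$ are multigraded, so their \emph{reduced} Gr\"obner bases with respect to $<$ consist of multihomogeneous elements (a term of a different multidegree occurring in a putative reduced generator would itself be a leading term of an element of the submodule, contradicting reducedness), and these reduced bases still have leading-exponent set the minimal generators of $\mathrm{in}_<(N)$. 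Now I would run Schreyer's construction on each of these multihomogeneous Gr\"obner bases: from a multihomogeneous Gr\"obner base it produces a (generally non-minimal) multigraded free resolution in which the rank and the multigraded shifts of each free module, together with the leading exponents of the induced Gr\"obner base of the next syzygy module, are computed from the leading exponents of the previous Gr\"obner base alone --- through least common multiples of leading monomials --- independently of the lower-order terms, hence independently of the ground field. Inductively, $\mathbb{K}\otimes M$ and $M^c$ therefore admit free resolutions with identical ranks and shifts in every homological degree, whence $K(\mathbb{K}\otimes M;T)=K(M^c;T)$. (Equivalently, both $K$-polynomials equal the $K$-polynomial of the quotient by the monomial submodule $\mathrm{in}_<(N)$ --- over $\mathbb{K}[x]$, resp.\ over $k[x]$ --- the $K$-polynomial being unchanged under passage to the initial submodule, cf.\ \cite{stu05}, and being insensitive to the ground field for a monomial quotient.)

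For the last assertion, recall that $\mathcal{C}(\cdot;T_1,\dots,T_d)$ is obtained from $K(\cdot;1-T_1,\dots,1-T_d)$ by keeping the homogeneous part of total degree equal to the codimension. Since $\mathrm{codim}$ is preserved under passage to the initial submodule and the dimension of a quotient by a fixed monomial submodule does not depend on the ground field, $\mathrm{codim}(\mathbb{K}\otimes M)=\mathrm{codim}\,M^c$. Extracting from the two equal power series the part of that common total degree then yields $\mathcal{C}(\mathbb{K}\otimes M;T)=\mathcal{C}(M^c;T)$.

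The main obstacle is the bookkeeping inside Schreyer's construction: one must verify that the multigraded \emph{shifts} --- not merely the ranks --- at each stage are functions of the leading-exponent data only, which is precisely where the multihomogeneity of the (reduced) Gr\"obner bases is used, and that the leading term of each Schreyer syzygy is controlled solely by least common multiples of leading monomials. The reduction coefficients expressing the vanishing of the $S$-polynomials do differ between the generic fibre $\mathbb{K}\otimes M$ and the special fibre $M^c$, but they contribute only strictly lower terms and so do not affect the numerical data; once this is checked, the conclusion follows formally from Proposition \ref{prop9}.
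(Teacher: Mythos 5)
Your proof is correct, and your main line of argument is genuinely different from the paper's. The paper's proof is essentially one sentence: combine Proposition~\ref{prop9} (common leading-exponent set) with the cited fact (\cite{stu05}, Theorem 8.36) that the $K$-polynomial is invariant under passage to the initial submodule; since the initial submodule is a monomial submodule whose $K$-polynomial is a field-independent combinatorial quantity, equality of $K$-polynomials follows at once. Your parenthetical ``Equivalently\ldots'' is precisely this argument. What you do instead, as your main route, is to reprove the relevant special case of that invariance by hand: you pass to reduced (hence multihomogeneous) Gr\"obner bases and observe that Schreyer's construction produces multigraded free resolutions whose ranks and shifts, at every homological degree, are computed solely from leading-exponent data via lcm's, hence agree over $\mathbb{K}$ and over $k$. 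This buys self-containedness at the cost of length, and the bookkeeping you flag (multihomogeneity of the reduced basis, of the S-pairs and the reduction, and hence of the Schreyer syzygies; shifts governed by $\deg\mathrm{lcm}(m_i,m_j)+a_{k_i}$) does all go through; one should also note, which you do not say explicitly, that the Schreyer resolution so obtained is finite, by Schreyer's refinement of Hilbert's syzygy theorem. Finally, you make explicit the codimension equality $\mathrm{codim}(\mathbb{K}\otimes M)=\mathrm{codim}\,M^c$ needed to pass from $K$-polynomials to multidegrees (since the grading here need not be positive and Proposition~\ref{prop3}(2) may not apply); the paper leaves this step under ``Consequently,'' so your treatment is slightly more careful there.
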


This follows from Proposition \ref{prop9} and from \cite{stu05}, Theorem 8.36 which asserts that the $K$-polynomial remains the same when taking the initial module with respect to any well-ordering.

\section{Multidegree for $F$-filtered $D$-modules}

Let $D=\C[x_1,\dots,x_n]\langle\partial_1,\dots,\partial_n\rangle$ be the Weyl algebra. 
A vector $(\mathbf{u},\mathbf{v})\in\mathbb{Z}^n\times\mathbb{Z}^n$ is called an \emph{admissible weight vector} for $D$ if for all $i$, $u_i+v_i\geq 0$.
For $P=\sum a_{\alpha,\beta}(x)x^{\alpha}\partial^{\beta}\in D$, we define 
\[
\textrm{ord}^{(\mathbf{u},\mathbf{v})}(P)=
\textrm{max}_{(\alpha,\beta)|a_{\alpha,\beta}\neq 0}(\sum u_i\alpha_i+\sum v_i\beta_i).
\]
We then define an increasing filtration by $F^{(\mathbf{u},\mathbf{v})}_d(D)=\{P\in D, \textrm{ord}^F(P)\leq d\}$ with $d\in\mathbb{Z}$.

In this section we consider only the weight vector $(\mathbf{0},\mathbf{1})$; we will simply denote the associated filtration by $(F_d(D))_{d\in\mathbb{N}}$, called the $F$-filtration. We have $\textrm{gr}^F(D)\simeq\C[x_1,\dots,x_n,\xi_1,\dots,\xi_n]$.

Let $M$ be a $D$-module. An $F$-filtration of $M$ is an exhausting increasing filtration $(F_d(M))_{d\in \mathbb{N}}$ compatible with the $F$-filtration of $D$. For $\mathbf{n}=(n_1,\dots,n_r)\in \mathbb{Z}^r$, let us denote by $D^r[\mathbf{n}]$ the module $D^r$ endowed with the $F$-filtration such that 
$F_d(D^r[\mathbf{n}])=\bigoplus_{i=1}^r F_{d-n_i}(D)$. If $N$ is a submodule of $D^r$, we endow $D^r[\mathbf{n}]/N$ with the quotient filtration, i.e. 
\[
F_d\left(\frac{D^r[\mathbf{n}]}{N}\right)=\frac{F_d(D^r[\mathbf{n}])+N}{N}.
\]
We say that a filtration $F_d(M)$ is good if $M$ is isomorphic as an $F$-filtered $D$-module to a module of the type $D^r[\mathbf{n}]/N$.

Let us take a filtered free resolution 
\[
 0 \to D^{r_{\delta}}[\mathbf{n}^{(\delta)}]\to \cdots \to D^{r_1}[\mathbf{n}^{(1)}]\to D^{r_0}[\mathbf{n}^{(0)}]\to M\to 0.
\]
Its existence can be proved in the same way as \cite{granger04}, Theorem 3.4, forgetting the minimality.

\begin{definition}
The $K$-polynomial of $D^r[\mathbf{n}]$ is defined by 
\[
K_F(D^r[\mathbf{n}];T)=\sum_i T^{\mathbf{n}_i} \in \mathbb{Z}[T].
\]
The $K$-polynomial of $M$ is defined by 
\[
K_F(M;T)=\sum_i (-1)^i K_F(D^{r_i}[\mathbf{n}^{(i)}];T) \in \mathbb{Z}[T].
\]
\end{definition}

\begin{prop}\label{prop4}
The definition of $K_F(M;T)$ does not depend on the filtered free resolution.
\end{prop}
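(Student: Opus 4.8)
The strategy is to reduce the statement to its commutative counterpart, Proposition \ref{prop1}, by passing to the associated graded objects via the Rees construction. The plan is as follows. First I would introduce the Rees algebra $R_F(D)=\bigoplus_{d\in\mathbb{Z}} F_d(D)\, t^d$, a subring of $D[t,t^{-1}]$; it is a $\mathbb{Z}$-graded ring, flat over $\C[t]$, with $R_F(D)/(t-1)\simeq D$ and $R_F(D)/(t)\simeq \textrm{gr}^F(D)\simeq\C[x_1,\dots,x_n,\xi_1,\dots,\xi_n]$. A good $F$-filtration on $M$ gives a finitely generated graded $R_F(D)$-module $R_F(M)=\bigoplus_d F_d(M)\,t^d$, and the shift $D^r[\mathbf{n}]$ corresponds exactly to the graded free module $R_F(D)^r$ shifted by $(n_1,\dots,n_r)$. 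Applying the Rees functor to a filtered free resolution of $M$ yields a graded free resolution of $R_F(M)$ over $R_F(D)$ (exactness is preserved because the resolution is \emph{filtered}, hence strict, so the sequences of $F_d$-pieces stay exact for every $d$), and the $F$-$K$-polynomial $K_F(M;T)$ as defined above equals, term by term, the graded-module $K$-polynomial of $R_F(M)$ in the single variable $T=t$.

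The second step handles the fact that $R_F(D)$ is not itself a polynomial ring. Here I would invoke the standard device of further degenerating to $\textrm{gr}^F(D)$: the module $R_F(M)\otimes_{\C[t]}\C[t]_{(t)}$ and its reduction mod $t$ are related by a flat specialization, exactly as in Proposition \ref{prop10} (genericity in the parameter $t$), and $\textrm{gr}^F(M)$ is a finitely generated graded module over the honest polynomial ring $\textrm{gr}^F(D)$. Reducing a graded free resolution of $R_F(M)$ modulo the nonzerodivisor $t-$ or equivalently using that the alternating sum of $K$-polynomials of free modules is unchanged under $\otimes_{\C[t]}\C[t]/(t)$ — produces a graded free resolution of $\textrm{gr}^F(M)$ with the same shifts, so $K_F(M;T)=K(\textrm{gr}^F(M);T)$ with the multigrading on $\textrm{gr}^F(D)$ given by $\deg x_i=\deg\xi_i=0$ on the generators and the single auxiliary degree recording the $F$-order (that is, $\deg x_i=0$, $\deg\xi_i=1$ — but for the present statement only the grading by $F$-order of the \emph{free generators} matters). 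Now Proposition \ref{prop1} applies verbatim to $\textrm{gr}^F(M)$ over this polynomial ring: any two graded free resolutions give the same $K$-polynomial. Since every filtered free resolution of $M$ produces, by the above, a graded free resolution of $\textrm{gr}^F(M)$ with the prescribed shifts, all filtered free resolutions of $M$ yield the same $K_F(M;T)$.

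The main obstacle, and the point that needs care rather than cleverness, is the strictness (strict exactness) of the filtered complex: one must check that a filtered free resolution in the sense used here is automatically \emph{strict}, i.e. that the induced maps on $\textrm{gr}^F$ and on each $F_d$ stay exact, so that $R_F$ and $\textrm{gr}^F$ are exact on it. This is where the hypothesis that each term is a \emph{good}-filtered free module $D^{r_i}[\mathbf{n}^{(i)}]$ and that the maps are filtered morphisms is essential; it is the filtered analogue of the fact used in \cite{granger04}, Theorem 3.4, and I would cite that construction. A secondary technical point is to make sure the Rees module $R_F(M)$ attached to a good filtration is indeed finitely generated over $R_F(D)$ — immediate from $M\simeq D^r[\mathbf{n}]/N$ — and that the ambient ring's noetherianity (needed for Proposition \ref{prop1}) transfers, which follows since $\textrm{gr}^F(D)$ is a polynomial ring. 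Everything else is bookkeeping with the single grading variable $T$.
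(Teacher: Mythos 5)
Your proof is correct and rests on the same core mechanism the paper uses: grade the filtered free resolution to get a graded free resolution of $\mathrm{gr}^F(M)$ over the commutative polynomial ring $\mathrm{gr}^F(D)$, note that the shifts are preserved so the $K$-polynomials agree, and then invoke Proposition~\ref{prop1}. The detour through the Rees algebra $\mathcal{R}_F(D)$ and a degeneration at $t=0$ is not needed here, and your appeal to Proposition~\ref{prop10} for that step is not quite the right tool (Proposition~\ref{prop10} is about generic specialization of polynomial parameters, whereas here one only needs that killing the central nonzerodivisor $t$ in a graded free resolution over $\mathcal{R}_F(D)$ yields a graded free resolution over $\mathrm{gr}^F(D)$ with the same shifts). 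Your concern about strictness is well placed and is exactly the hypothesis implicit in the paper's phrase ``filtered exact sequence'': following \cite{granger04}, a filtered free resolution is strict by construction, so $\mathrm{gr}^F$ preserves exactness. Modulo these two simplifications the argument coincides with the paper's.
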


\begin{proof}
Let $R=\textrm{gr}^F(D)$, and for $\mathbf{n}=(n_1,\dots,n_r)$, 
$R^r[\mathbf{n}]=\oplus_{i=1}^r R[n_i]$.
By grading the filtered free resolution we get a graded free resolution over the commutative ring $R$:
\[
 0 \to R^{r_{\delta}}[\mathbf{n}^{(\delta)}]\to \cdots \to R^{r_1}[\mathbf{n}^{(1)}]\to R^{r_0}[\mathbf{n}^{(0)}]\to\textrm{gr}^F(M) \to 0.
\]
The $K$-polynomial is unchanged. Then apply Proposition \ref{prop1}.
\end{proof}

\begin{definition}
We denote by $\mathcal{C}_F(M;T)$ the term of least degree in $T$ in $K_F(M;1-T)$. This is the multidegree of $M$ with respect to $F$.
\end{definition}

\begin{prop}\label{prop5}
$\mathcal{C}_F(M;T)$ does not depend on the good filtration.
\end{prop}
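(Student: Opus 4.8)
The plan is to reduce the independence of $\mathcal{C}_F(M;T)$ from the chosen good filtration to the independence of the $K$-polynomial $K_F(M;T)$, and ultimately to the commutative theory already recorded in Proposition \ref{prop1} and Proposition \ref{prop3}. By the definition of $\mathcal{C}_F$, it suffices to show that $K_F(M;T)$ is independent of the good $F$-filtration on $M$, since extracting the lowest-degree term of $K_F(M;1-T)$ is a purely formal operation. Proposition \ref{prop4} already tells us that, for a \emph{fixed} good filtration, $K_F(M;T)$ does not depend on the filtered free resolution; so the real content is comparing two different good filtrations $F$ and $F'$ on the same underlying $D$-module $M$.

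First I would invoke the standard fact that any two good $F$-filtrations on $M$ are equivalent, i.e. there is an integer $k$ with $F'_{d-k}(M)\subseteq F_d(M)\subseteq F'_{d+k}(M)$ for all $d$; this is classical for good filtrations over the Weyl algebra and can be quoted as in \cite{laurent88}. The associated graded modules $\mathrm{gr}^F(M)$ and $\mathrm{gr}^{F'}(M)$ over $R=\mathrm{gr}^F(D)\simeq\C[x,\xi]$ need not be isomorphic as graded modules, but they have the same class in the appropriate Grothendieck group: more concretely, equivalence of filtrations forces them to have the same Hilbert polynomial up to the combinatorics of shifts, and in particular the same multidegree data. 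The clean way to package this is via the $F$-Rees module $\mathcal{R}_F(M)=\bigoplus_d F_d(M)$ over the Rees algebra $\mathcal{R}_F(D)$: one shows $\mathcal{R}_F(M)$ and $\mathcal{R}_{F'}(M)$ have graded $\mathcal{R}_F(D)$-free resolutions producing the same alternating sum of shifts, hence the same $K$-polynomial, because both specialise (by killing the Rees parameter) to resolutions over $R$ computing the same invariant.

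The more self-contained route, and the one I would actually write, is to use the genericity/Gr\"obner machinery of Section 1.2 together with Proposition \ref{prop4}. A good filtration corresponds to a presentation $M\simeq D^r[\mathbf{n}]/N$; passing to $\mathrm{gr}^F$ gives a graded presentation of $\mathrm{gr}^F(M)$ over $R$ with a graded free resolution whose shifts determine $K_F(M;T)$. By Proposition \ref{prop3}(1), $K_F(M;T)=H(\mathrm{gr}^F(M);T)\cdot\prod_i(1-T^{a_i})$ when $R$ is positively graded (here $a_i=1$ for each of the $2n$ variables, so the denominator is $(1-T)^{2n}$); thus $K_F(M;T)$ is literally determined by the Hilbert series of $\mathrm{gr}^F(M)$. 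Now the Hilbert function $d\mapsto \dim_{\C}\mathrm{gr}^F_d(M)=\dim_{\C}F_d(M)/F_{d-1}(M)$ agrees with a polynomial for $d\gg 0$, and equivalence of two good filtrations shows these two Hilbert polynomials coincide (their difference would otherwise be a nonzero polynomial that is bounded on an arithmetic progression, a contradiction); since $K_F$ only sees the Hilbert series up to the polynomial denominator, the two $K$-polynomials agree, whence $\mathcal{C}_F(M;T)$ agrees.

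The main obstacle is the honest comparison of the two graded modules $\mathrm{gr}^F(M)$ and $\mathrm{gr}^{F'}(M)$: they are genuinely different modules, so one cannot argue by a direct isomorphism, and one must be careful that the "same $K$-polynomial" statement survives the ambiguity in the shift vectors $\mathbf{n}$. I expect this is handled cleanly by the Hilbert-series formula of Proposition \ref{prop3}(1), which converts the module-level ambiguity into a statement about the numerical Hilbert function, where equivalence of filtrations gives exactly the needed equality of Hilbert polynomials; the lower-order terms are irrelevant because they are killed when one extracts the lowest-degree homogeneous part in $K_F(M;1-T)$. I would close by remarking that this is the $F$-filtered shadow of the argument that will be used, following the suggestion of T.~Oaku, for the $(F,V)$-bifiltered invariance in Section 3 via \cite{laurent88}.
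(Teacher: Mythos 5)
There is a genuine gap: your central reduction — "it suffices to show that $K_F(M;T)$ is independent of the good $F$-filtration" — aims at a false statement. The $K$-polynomial \emph{does} depend on the good filtration. The simplest counterexample is $M=D$ itself, with the standard filtration $D^1[0]$ versus the shifted one $D^1[1]$: these are both good filtrations on the same $D$-module, yet $K_F(D^1[0];T)=1$ while $K_F(D^1[1];T)=T$. Correspondingly the Hilbert series of $\mathbb{K}\otimes\mathrm{gr}^F$ are $1/(1-T)^n$ and $T/(1-T)^n$, which are unequal. Only after passing to the lowest-degree term of $K_F(M;1-T)$ — i.e.\ the multidegree, which here is $T^0=1$ in both cases — does one get an invariant.

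Your subsidiary argument is also incorrect: equivalence of good filtrations ($F'_{d-k}\subseteq F_d\subseteq F'_{d+k}$) does \emph{not} force equality of Hilbert polynomials. It gives inequalities of the form $p'(d-k)\le p(d)\le p'(d+k)$ for $d\gg 0$, which only pin down the leading coefficient (hence the codimension and the multiplicity), not the lower-order terms. Take $p(d)=d^2+d$ and $p'(d)=d^2$: both inequalities hold with $k=1$ for $d$ large, yet $p\neq p'$. So the claim "their difference would otherwise be a nonzero polynomial that is bounded on an arithmetic progression" is not what the equivalence inequalities give you. The paper's proof avoids this trap precisely by \emph{not} trying to prove invariance of $K_F$ or of the whole Hilbert series: it observes that after tensoring with $\mathbb{K}=\mathrm{Frac}(\C[x])$ the grading becomes positive, Proposition \ref{prop3} identifies $\mathcal{C}_F(M;T)$ with $mT^d$ where $d=\mathrm{codim}\,\mathbb{K}\otimes\mathrm{gr}^F(M)$ and $m$ is the multiplicity at $(\xi_1,\dots,\xi_n)$, and then quotes the classical fact (\cite{cimpa1}, Remark 12 and Proposition 25) that codimension and multiplicity of $\mathrm{gr}^F(M)$ are independent of the good filtration. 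Your penultimate sentence ("the lower-order terms are irrelevant\ldots") is the right instinct, but you cannot reach it via the route you lay out; you must argue directly that only the leading coefficient of the Hilbert polynomial is an invariant, and that this is exactly what $\mathcal{C}_F$ extracts.
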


\begin{proof}
Again we argue by grading. We have $\mathcal{C}_F(M;T)=\mathcal{C}(\textrm{gr}^F(M);T)$. Let $\mathbb{K}=\textrm{Frac}(\C[x])$. We have $\mathcal{C}(\textrm{gr}^F(M);T)=\mathcal{C}(\mathbb{K}\otimes\textrm{gr}^F(M);T)$. The graded ring $\mathbb{K}\otimes \textrm{gr}^F(D)$ is a positively graded ring. Hence the $K$-polynomial is equal to the numerator of the Hilbert series, by Proposition \ref{prop3}. The multidegree is of the form $mT^d$ with $d=\textrm{codim}\, \mathbb{K}\otimes\textrm{gr}^F(M)$ (unless it is $0$), and $m$ is the multiplicity of $\mathbb{K}\otimes\textrm{gr}^F(M)$ along the maximal ideal $\xi_1,\dots,\xi_n$. We can show that this data is independent of the good filtration in the same way as \cite{cimpa1}, Remark 12 and Proposition 25.
\end{proof}

Let us give some interpretation. We have $\mathcal{C}_F(M;T)=mT^d$.
For $x_0\in \C^n$, the graded $\C[\xi]$-module
$(\textrm{gr}^F(M))^{x_0}$ is defined as in the section 1.2.

\begin{prop}\label{prop6}
\begin{enumerate}
\item
 $m$ and $d$ are equal respectively to the multiplicity and the codimension of the graded $\C[\xi]$-module $\mathrm{gr}^F(M)^{x_0}$ for $x_0$ generic. Let us denote by $\pi:T^*\C^n\to \C^n$ the canonical projection. $d$ is equal to the codimension of the variety $\mathrm{char}\,M\cap \pi^{-1}(x_0)$ for $x_0$ generic.
\item If moreover $M$ is holonomic, then $m=\mathrm{rank}\,M=\mathrm{dim}_{\mathbb{K}} \mathbb{K}\otimes \mathrm{gr}^F(M)$.
\end{enumerate}
\end{prop}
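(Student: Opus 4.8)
The plan is to reduce part~1 to the commutative specialization results of Section~1, via the grading trick already used for Proposition~\ref{prop5}, and part~2 to a direct dimension count. Recall from the proof of Proposition~\ref{prop5} that $\mathcal{C}_F(M;T)=\mathcal{C}(\mathrm{gr}^F(M);T)=\mathcal{C}(\mathbb{K}\otimes\mathrm{gr}^F(M);T)$ with $\mathbb{K}=\mathrm{Frac}(\C[x])$, where $\mathrm{gr}^F(D)=\C[x_1,\dots,x_n,\xi_1,\dots,\xi_n]$ is regarded as the polynomial ring $\C[x][\xi]$ in the variables $\xi_1,\dots,\xi_n$ (degree $1$) over the parameter ring $\C[x]$ (degree $0$), exactly the setting of Section~1.2.

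First I would apply Proposition~\ref{prop10} to the multigraded $\C[x][\xi]$-module $\mathrm{gr}^F(M)$: for $x_0=c$ avoiding the hypersurface $\bigcup_i(q_i=0)$ attached to a Gr\"obner basis as in Proposition~\ref{prop9}, we obtain $K(\mathbb{K}\otimes\mathrm{gr}^F(M);T)=K((\mathrm{gr}^F(M))^{x_0};T)$, hence $\mathcal{C}(\mathbb{K}\otimes\mathrm{gr}^F(M);T)=\mathcal{C}((\mathrm{gr}^F(M))^{x_0};T)$. Since $\C[\xi]$ is standard graded, hence positively graded, Proposition~\ref{prop3} together with Proposition~\ref{prop2} identifies $\mathcal{C}((\mathrm{gr}^F(M))^{x_0};T)$ with $m'\,T^{d'}$, where $d'$ is the codimension and $m'$ the (Hilbert--Samuel) multiplicity of the graded $\C[\xi]$-module $(\mathrm{gr}^F(M))^{x_0}$. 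Comparing with $\mathcal{C}_F(M;T)=mT^d$ yields $m=m'$ and $d=d'$ for $x_0$ generic, which is the first assertion of part~1. For the geometric reading, the set-theoretic support of $(\mathrm{gr}^F(M))^{x_0}=\C[x,\xi]/(x-c)\otimes_{\C[x,\xi]}\mathrm{gr}^F(M)$ equals $\mathrm{Supp}(\mathrm{gr}^F(M))\cap V(x-c)=\mathrm{char}\,M\cap\pi^{-1}(x_0)$, a subvariety of $\pi^{-1}(x_0)\cong\mathrm{Spec}\,\C[\xi]$, so its codimension there is $d'=d$, after shrinking the generic locus once more if necessary.

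For part~2, assume $M$ holonomic and nonzero, so $\mathrm{char}\,M$ has dimension $n$ and $\mathrm{gr}^F(M)$ has Krull dimension $n$ over $\C[x,\xi]$. Then $\mathbb{K}\otimes\mathrm{gr}^F(M)$ is a finite-dimensional $\mathbb{K}$-vector space, so $d=n$, and for a zero-dimensional graded module over $\mathbb{K}[\xi]$ the multiplicity is just its $\mathbb{K}$-dimension, giving $m=\dim_{\mathbb{K}}\mathbb{K}\otimes\mathrm{gr}^F(M)$. To finish I would compare this with $\mathrm{rank}\,M=\dim_{\mathbb{K}}\mathbb{K}\otimes M$: take a good $F$-filtration of $M$, tensor it with $\mathbb{K}$ over $\C[x]$ (exact, being a localization) to obtain a filtration of $\mathbb{K}\otimes M$ with associated graded $\mathbb{K}\otimes\mathrm{gr}^F(M)$; finite-dimensionality forces this filtration to stabilize, whence $\dim_{\mathbb{K}}\mathbb{K}\otimes M=\dim_{\mathbb{K}}\mathbb{K}\otimes\mathrm{gr}^F(M)$.

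I expect the delicate point to be the genericity bookkeeping in part~1: one must ensure that a single proper closed subset of $\C^n$ (the hypersurface of Proposition~\ref{prop10}, intersected with one further proper closed set) simultaneously controls the equality of $K$-polynomials, the coincidence of the module-theoretic codimension of $(\mathrm{gr}^F(M))^{x_0}$ with the set-theoretic codimension of the fibre $\mathrm{char}\,M\cap\pi^{-1}(x_0)$, and the constancy of the multiplicity along that fibre. The support identity $\mathrm{Supp}\bigl(A\otimes_{\C[x,\xi]}\C[x,\xi]/(x-c)\bigr)=\mathrm{Supp}(A)\cap V(x-c)$ is automatic set-theoretically, so the real content is to rule out that an embedded or non-dominant component of $\mathrm{char}\,M$ makes the codimension or multiplicity read off the $K$-polynomial differ from the genuinely generic ones; this is precisely what is secured by first passing to $\mathbb{K}\otimes\mathrm{gr}^F(M)$ and only afterwards specialising, which is what Propositions~\ref{prop9} and~\ref{prop10} are designed to permit.
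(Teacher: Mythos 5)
Your proposal is correct and follows essentially the same route as the paper: part 1 is exactly the specialization statement of Proposition~\ref{prop10} applied to $\mathrm{gr}^F(M)$ viewed as a multigraded $\C[x][\xi]$-module (the paper's whole proof of part 1 is the one-line citation ``This is Proposition~\ref{prop10}''), and part 2 is the observation that in the holonomic case $\mathbb{K}\otimes\mathrm{gr}^F(M)$ is finite-dimensional, so the multiplicity read off the $K$-polynomial via Proposition~\ref{prop3} is just $H(\mathbb{K}\otimes\mathrm{gr}^F(M);T)|_{T=1}=\dim_{\mathbb{K}}\mathbb{K}\otimes\mathrm{gr}^F(M)$. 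The only genuine addition you make is the closing paragraph justifying $\mathrm{rank}\,M=\dim_{\mathbb{K}}\mathbb{K}\otimes M=\dim_{\mathbb{K}}\mathbb{K}\otimes\mathrm{gr}^F(M)$ by localizing a good filtration; the paper silently treats this as a known classical fact, so your proof is if anything slightly more complete, with no gaps.
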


\begin{proof}
\begin{enumerate}
\item This is Proposition \ref{prop10}.
\item In the holonomic case, $\mathbb{K}\otimes\textrm{gr}^F(M)$ is finite dimensional over $\mathbb{K}$, and we have 
\[
\textrm{dim}_{\mathbb{K}} \mathbb{K}\otimes \textrm{gr}^F(M)=
H(\mathbb{K}\otimes\textrm{gr}^F(M);T)_{|T=1}.
\]
The result follows, by using Proposition \ref{prop3}.
\end{enumerate}
\end{proof}

\section{Multidegree for $(F,V)$-bifiltered $D$-modules}

Now set $D=\C[x_1,\dots,x_n,t_1,\dots,t_p]\langle\partial_{x_1},\dots,\partial_{x_n},\partial_{t_1},\dots,\partial_{t_p}\rangle$. We still endow it with the $F$-filtration. We introduce the $V$-filtration along $t_1=\dots=t_p=0$. This is the filtration defined by assigning the weight vector 
$(\mathbf{0},\mathbf{-1},\mathbf{0},\mathbf{1})$ to the set of variables $(x,t,\partial_x,\partial_t)$. We denote this filtration by $(V_k(D))_{k\in\mathbb{Z}}$.

Then we have the $(F,V)$-bifiltration on $D$ defined by $F_{d,k}(D)=F_d(D)\cap V_k(D)$ for $d,k\in\mathbb{Z}$. 
For $\mathbf{n},\mathbf{m}\in \mathbb{Z}^r$, let us denote by $D^r[\mathbf{n}][\mathbf{m}]$ the module $D^r$ endowed with the bifiltration such that 
\[
F_{d,k}(D^r[\mathbf{n}][\mathbf{m}])=\bigoplus_{i=1}^r F_{d-n_i,k-m_i}(D).
\] 
A quotient $D^r[\mathbf{n}][\mathbf{m}]/N$ is endowed with the bifiltration $F_{d,k}(D^r[\mathbf{n}][\mathbf{m}]/N)=(F_{d,k}(D^r[\mathbf{n}][\mathbf{m}])+N)/N$. 

Let $M$ be a $D$-module. A good bifiltration $(F_{d,k}(M))_{d\in \mathbb{N},k\in\mathbb{Z}}$ is an exhaustive increasing bifiltration, compatible with the bifiltration $(F_{d,k}(D))$, such that $M$ is isomorphic as a bifiltered module to a module of the type  $D^r[\mathbf{n}][\mathbf{m}]/N$.

\begin{prop}\label{prop12} $M$ admits a bifiltered free resolution, i.e.\ a bifiltered exact sequence 
\[
 0 \to D^{r_{\delta}}[\mathbf{n}^{(\delta)}][\mathbf{m}^{(\delta)}]\to \cdots \to D^{r_1}[\mathbf{n}^{(1)}][\mathbf{m}^{(1)}]\to 
D^{r_0}[\mathbf{n}^{(0)}][\mathbf{m}^{(0)}]\to M\to 0.
\]
\end{prop}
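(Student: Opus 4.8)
The plan is to mimic the construction of $F$-filtered free resolutions invoked after the resolution displayed in Section 2 (which itself follows \cite{granger04}, Theorem 3.4, with minimality dropped), but keeping track of two shift vectors $\mathbf{n}^{(i)}$ and $\mathbf{m}^{(i)}$ simultaneously. Since $M$ carries a good bifiltration, by definition $M\cong D^{r_0}[\mathbf{n}^{(0)}][\mathbf{m}^{(0)}]/N_0$ for some shift vectors and some submodule $N_0\subseteq D^{r_0}$. So set $\mathcal L_0=D^{r_0}[\mathbf{n}^{(0)}][\mathbf{m}^{(0)}]$ and let $\phi_0:\mathcal L_0\to M$ be the canonical surjection, which is bifiltered by construction of the quotient bifiltration. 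The task is then to resolve $N_0=\ker\phi_0$ as a bifiltered module and iterate.

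The first key step is to equip the syzygy module $N_0$ with a \emph{good} bifiltration induced from $\mathcal L_0$, namely $F_{d,k}(N_0)=N_0\cap F_{d,k}(\mathcal L_0)$, and to exhibit bifiltered generators. For this I would pass to the bigraded ring: the associated bigraded ring of $D$ with respect to $(F,V)$ is a commutative polynomial ring (the symbol ring in $x,t,\xi_x,\xi_t$, suitably bigraded), and $\mathrm{gr}^{F,V}(N_0)$ is a bigraded submodule of the bigraded free module $\mathrm{gr}^{F,V}(\mathcal L_0)$, which is Noetherian. Hence $\mathrm{gr}^{F,V}(N_0)$ is finitely generated by bihomogeneous elements; lifting these to $N_0$ produces finitely many elements whose symbols generate, and a standard argument (as in the $F$-filtered case, using that the bifiltration is exhaustive and the symbols generate degreewise) shows these elements generate $N_0$ and that the map $D^{r_1}[\mathbf{n}^{(1)}][\mathbf{m}^{(1)}]\to \mathcal L_0$ sending the standard generators to these lifts, with $\mathbf{n}^{(1)},\mathbf{m}^{(1)}$ recording their $(F,V)$-orders, is a \emph{strict} bifiltered morphism onto $N_0$. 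Strictness — i.e.\ $F_{d,k}$ of the source maps onto $N_0\cap F_{d,k}(\mathcal L_0)$ — is exactly the condition that lets the construction iterate and is the analogue of what makes filtered resolutions work.

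Iterating gives a bifiltered complex $\cdots\to \mathcal L_2\to \mathcal L_1\to \mathcal L_0\to M\to 0$ which is exact as a complex of $D$-modules by construction (each $\mathcal L_{i+1}$ surjects strictly onto the kernel of the previous map). The second key step is termination: the complex stops after finitely many steps. This follows by grading — $\mathrm{gr}^{F,V}$ of the complex is a bigraded free resolution of $\mathrm{gr}^{F,V}(M)$ over the polynomial symbol ring, whose global dimension is finite (it equals $2(n+p)$), so the $(n+p+1)$-st syzygy is free; concretely one invokes the graded Hilbert syzygy theorem and then a Nakayama-type / strictness argument to conclude that the corresponding $\mathcal L_\delta$ can be taken free and the resolution truncated there.

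The main obstacle I expect is precisely establishing \emph{strictness} of the morphisms at each stage and, relatedly, that the induced bifiltration on each syzygy module is again good (so that the associated-graded of the complex really is a complex of bigraded free modules resolving $\mathrm{gr}^{F,V}(M)$). This is where one must genuinely use that $(F,V)$ is a filtration by an admissible weight vector so that $\mathrm{gr}^{F,V}(D)$ is a Noetherian commutative polynomial ring, and that $F_{d,k}(D)=F_d(D)\cap V_k(D)$ interacts well with the grading; the bookkeeping of the two shift vectors $\mathbf{n}^{(i)}$ and $\mathbf{m}^{(i)}$ is routine once strictness is in hand, and the existence for the global Weyl algebra (as opposed to the local analytic case of \cite{granger04}) costs nothing here since we are not claiming minimality.
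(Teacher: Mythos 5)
There is a genuine gap at the very first step of your syzygy construction, and it is exactly the place where the bifiltered theory diverges from the singly filtered one. You equip $N_0=\ker\phi_0$ with the induced bifiltration $F_{d,k}(N_0)=N_0\cap F_{d,k}(\mathcal L_0)$ and then assert that $\mathrm{gr}^{F,V}(N_0)$ is a bigraded submodule of $\mathrm{gr}^{F,V}(\mathcal L_0)$. For a single filtration this is automatic (if $u\in F_d(N)$ lands in $F_{d-1}(L)$, then $u\in N\cap F_{d-1}(L)=F_{d-1}(N)$), but for a bifiltration it fails: an element $u\in F_{d,k}(N_0)$ whose class dies in $\mathrm{bigr}(\mathcal L_0)$ satisfies $u=a+b$ with $a\in F_{d-1,k}(\mathcal L_0)$ and $b\in F_{d,k-1}(\mathcal L_0)$, and there is no reason one can choose $a,b\in N_0$. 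So the map $\mathrm{bigr}(N_0)\to\mathrm{bigr}(\mathcal L_0)$ need not be injective, the Noetherian argument on the image does not lift back to strict generators of $N_0$, and the descent step in your ``standard argument'' (reducing $u-\sum\tilde Q_jP_j$ to a strictly smaller bidegree in $N_0$) does not go through, because that element, while in $N_0\cap(F_{d-1,k}(\mathcal L_0)+F_{d,k-1}(\mathcal L_0))$, need not split into $F_{d-1,k}(N_0)+F_{d,k-1}(N_0)$. You flag strictness as ``the main obstacle'' but do not resolve it; in fact the non-exactness of $\mathrm{bigr}$ on induced bifiltrations is precisely the phenomenon the paper isolates later with the \emph{nice} condition (Definition 4.1 and Lemma 4.3), which one cannot assume here.

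The paper's proof circumvents this by replacing one of the two filtrations by a grading before taking syzygies. It forms the Rees module $\mathcal R_F(M)$, a $D^{(h)}$-module carrying only the single $V$-filtration, computes a $V$-adapted (singly filtered) free resolution over $D^{(h)}$ via the Gr\"obner-basis machinery of Oaku--Takayama, and then dehomogenizes with $\rho_F$; a symmetric alternative using $\mathcal R_V$ and a lift of a bigraded resolution of $\mathrm{gr}^F\mathcal R_V(M)$ is also sketched. A closely related route that would rescue the spirit of your approach is to work with the bi-Rees algebra $\mathcal R(D)=\bigoplus_{d,k}F_{d,k}(D)\tau^d\theta^k$ that appears in the proof of Theorem 3.1: unlike $\mathrm{bigr}(N_0)$, the module $\mathcal R(N_0)=\bigoplus_{d,k}F_{d,k}(N_0)\tau^d\theta^k$ genuinely is a bigraded submodule of the free $\mathcal R(D)$-module $\mathcal R(\mathcal L_0)$, so Noetherianity of $\mathcal R(D)$ gives bihomogeneous generators $P_i\tau^{n_i}\theta^{m_i}$, and reading off the graded pieces yields $F_{d,k}(N_0)=\sum_i F_{d-n_i,k-m_i}(D)P_i$ directly, i.e.\ strictness for free, with no induction. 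In short: you need a Rees construction (one step at a time, as in the paper, or all at once) rather than the bisymbol ring, because only the Rees functor is exact on strictly (bi)filtered modules.
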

We shall prove this proposition in a constructive way. For this purpose, let us introduce some Rees algebras. First, we have the Rees algebra with respect to the $F$-filtration (c.f. \cite{narvaez97}):
\[
\mathcal{R}_{F}(D)=\bigoplus_{d}F_{d}(D)\tau^{d}.
\]
This is endowed with the $V$-filtration :
\[
V_k(\mathcal{R}_F(D))=\bigoplus_{k\in\mathcal{Z}} F_{d,k}(\D) \tau^d \ \textrm{for}\, d\in\mathbb{N}.
\]
$\mathcal{R}_{F}(D)$ is isomorphic to the $\C$-algebra generated by $x_{i}$, $t_{i}$, $(\partial_{x_{i}}\tau)$, 
$(\partial_{t_{i}}\tau)$, $\tau$, subject to the relations 
$[\partial_{x_{i}}\tau,x_{i}]=\tau$ and $[\partial_{t_{i}}\tau,t_{i}]=\tau$, the commutators involving other pairs of generators being zero. This is a noetherian algebra.
We will replace respectively the generators  $x_{i}$, $t_{i}$, $\partial_{x_{i}}\tau$, 
$\partial_{t_{i}}\tau$, $\tau$ by  $x_{i}$, $t_{i}$, $\partial_{x_{i}}$, 
$\partial_{t_{i}}$, $h$, thus we identify $\mathcal{R}_{F}(D)$ with the $\C$-algebra, denoted $D^{(h)}$, generated by $x_{i}$, $t_{i}$, $\partial_{x_{i}}$, $\partial_{t_{i}}$, $h$, subject to the relations 
\[
[\partial_{x_{i}},x_{i}]=h \quad \textrm{and} \quad [\partial_{t_{i}},t_{i}]=h.
\]
An admissible weight vector for $D^{(h)}$ is a vector $(\mathbf{u},\mathbf{v},l)\in\mathbb{Z}^{n+p}\times\mathbb{Z}^{n+p}\times\mathbb{Z}$ such that for any $i$, $u_i+v_i\geq l$. A filtration is associated with such a vector by assigning it to the set of variables $(x,t,\partial_x,\partial_t,h)$.
The filtration associated with $(\mathbf{u},\mathbf{v},l)=(\mathbf{0},\mathbf{-1},\mathbf{0},\mathbf{1},0)$ gives the $V$-filtration.
The bigraded ring $\textrm{gr}^V(D^{(h)})$ is isomorphic to $D^{(h)}$
endowed with the following multigrading :
\begin{description}
\item
$\textrm{deg}(x_i)=(0,0)$, \quad $\textrm{deg}(t_i)=(0,-1)$, \quad $\textrm{deg}(h)=(1,0)$,
\item
$\textrm{deg}(\partial_{x_i})=(1,0)$,\quad
$\textrm{deg}(\partial_{t_i})=(1,1)$.
\end{description}

Let us denote $F_{d}(M)=\bigcup_{k}F_{d,k}(M)$. 
We associate with $M$ a $\mathcal{R}_F(D)$-module $\mathcal{R}_F(M)=\oplus_{d}F_{d}(M)\tau^{d}$, this is endowed with a $V$-filtration
$V_{k}(\mathcal{R}_F(M))=\oplus_{d}F_{d,k}(M)\tau^{d}$.

Conversely, there exists a dehomogenizing functor $\rho_F$ (see \cite{granger04}, where this functor is denoted by $\rho$), from the category of $V$-filtered graded $D^{(h)}$-modules to the category of bifiltered $D$-modules. A $D^{(h)}$-module is said to be $h$-saturated if the action of $h$ on this module is injective. \cite{granger04}, Proposition 3.6 states that the functors $\rho_F$ and $\mathcal{R}_F$ give an equivalence of categories between the category of $h$-saturated $D^{(h)}$-modules with good $V$-filtrations and the category of $D$-modules with good bifiltrations, and that moreover these functors are exact.

We have also the Rees algebra of $D$ with respect to $V$ :
\[
\mathcal{R}_V(D)=\bigoplus_{k\in\mathbb{Z}} V_k(\D) \theta^k
\]
This is endowed with the following filtration :
\[
F_d(\mathcal{R}_V(D))=\bigoplus_{k\in\mathbb{Z}} F_{d,k}(\D) \theta^k \ \textrm{for}\, d\in\mathbb{N}
\]
$\mathcal{R}_V(D)$ is generated as a $\C$-algebra by $x_i\theta^0, \partial_{x_i}\theta^0, t_i\theta^{-1},\partial_{t_i}\theta, \theta$.
Let us denote respectively those elements by $\tilde{x_i}, \tilde{\partial_{x_i}}, \tilde{t_i},\tilde{\partial_{t_i}}, \theta$. 
The following lemma is clear.

\begin{lemme}\label{lemme1}
$\mathcal{R}_V(D)$ is isomorphic to the algebra $\C[\tilde{x_i}, \tilde{t_i},\theta]\langle\tilde{\partial_{x_i}},\tilde{\partial_{t_i}}\rangle$ subject to the relations $[ \tilde{\partial_{x_i}},\tilde{x_i}]=1$ and $[ \tilde{\partial_{t_i}},\tilde{t_i}]=1$ for any $i$. \\
The $F$-filtration is then given by assigning the weight vector $(\mathbf{0},\mathbf{0},0,\mathbf{1},\mathbf{1})$ to the set of variables $(\tilde{x},\tilde{t},\theta,\tilde{\partial_{x}},\tilde{\partial_{t}})$.
\end{lemme}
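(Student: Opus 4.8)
The plan is to write the isomorphism down explicitly and then match the $F$-filtrations by a monomial bookkeeping. Let $A$ denote the abstract $\C$-algebra $\C[\tilde{x_i},\tilde{t_i},\theta]\langle\tilde{\partial_{x_i}},\tilde{\partial_{t_i}}\rangle$ presented by the stated relations (so $\theta$ is central, $[\tilde{\partial_{x_i}},\tilde{x_i}]=[\tilde{\partial_{t_i}},\tilde{t_i}]=1$, and all other pairs of generators commute); concretely $A$ is the Weyl algebra in the $n+p$ variables $\tilde{x_i},\tilde{t_i}$ tensored over $\C$ with the central polynomial ring $\C[\theta]$, so it admits the $\C$-basis of ordered monomials $\tilde{x}^\alpha\tilde{t}^\beta\theta^e\tilde{\partial_x}^\gamma\tilde{\partial_t}^\delta$. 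I would define $\phi\colon A\to\mathcal{R}_V(D)\subset D[\theta,\theta^{-1}]$ on generators by $\tilde{x_i}\mapsto x_i$, $\tilde{t_i}\mapsto t_i\theta^{-1}$, $\tilde{\partial_{x_i}}\mapsto\partial_{x_i}$, $\tilde{\partial_{t_i}}\mapsto\partial_{t_i}\theta$, $\theta\mapsto\theta$. Since $\theta$ is central in $D[\theta,\theta^{-1}]$ one checks directly that $[\partial_{x_i},x_i]=1$, $[\partial_{t_i}\theta,t_i\theta^{-1}]=[\partial_{t_i},t_i]=1$, and all the remaining commutators vanish, so $\phi$ is a well-defined algebra homomorphism; its image is the subalgebra generated by $x_i,\partial_{x_i},t_i\theta^{-1},\partial_{t_i}\theta,\theta$, which is $\mathcal{R}_V(D)$ (this generating set is recorded just above the statement), so $\phi$ is surjective.

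For injectivity I would evaluate $\phi$ on the monomial basis of $A$: using centrality of $\theta$ one gets $\phi(\tilde{x}^\alpha\tilde{t}^\beta\theta^e\tilde{\partial_x}^\gamma\tilde{\partial_t}^\delta)=x^\alpha t^\beta\partial_x^\gamma\partial_t^\delta\,\theta^{\,e-|\beta|+|\delta|}$. As $(\alpha,\beta,\gamma,\delta,e)$ ranges over $\mathbb{N}^{n+p}\times\mathbb{N}^{n+p}\times\mathbb{N}$ these are pairwise-distinct members of the $\C$-basis $\{x^\alpha t^\beta\partial_x^\gamma\partial_t^\delta\theta^m\}$ of $D[\theta,\theta^{-1}]$ — distinct $(\alpha,\beta,\gamma,\delta)$ give distinct operator monomials, and for fixed $(\beta,\delta)$ the assignment $e\mapsto e-|\beta|+|\delta|$ is injective — so $\phi$ sends a $\C$-basis of $A$ to a $\C$-linearly independent set and is injective, hence an isomorphism. (Incidentally the exponents that occur are exactly the $m\geq-|\beta|+|\delta|=\mathrm{ord}^V(x^\alpha t^\beta\partial_x^\gamma\partial_t^\delta)$, i.e.\ precisely the monomials spanning $\mathcal{R}_V(D)=\bigoplus_k V_k(D)\theta^k$, which reconfirms surjectivity.)

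It remains to transport the $F$-filtration. By definition $F_d(\mathcal{R}_V(D))=\bigoplus_k\bigl(F_d(D)\cap V_k(D)\bigr)\theta^k$, so a monomial $x^\alpha t^\beta\partial_x^\gamma\partial_t^\delta\theta^m$ of $\mathcal{R}_V(D)$ lies in $F_d$ iff $|\gamma|+|\delta|\leq d$ and $-|\beta|+|\delta|\leq m$; but the second inequality holds automatically for every monomial of the Rees algebra, so the condition is just $|\gamma|+|\delta|\leq d$. Pulling back through $\phi$, $F_d(\mathcal{R}_V(D))$ is the $\C$-span of the monomials $\tilde{x}^\alpha\tilde{t}^\beta\theta^e\tilde{\partial_x}^\gamma\tilde{\partial_t}^\delta$ with $|\gamma|+|\delta|\leq d$ and $e\geq 0$. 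On the other hand the weight vector $(\mathbf{0},\mathbf{0},0,\mathbf{1},\mathbf{1})$ on $(\tilde{x},\tilde{t},\theta,\tilde{\partial_x},\tilde{\partial_t})$ assigns to that monomial the value $|\gamma|+|\delta|$, so its associated filtration consists of exactly those spans; this gives the last assertion.

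The statement is, as the author says, clear, and there is no real obstacle; the one point worth a moment's care is the $F$-filtration identification, where one must keep in mind that membership in $\mathcal{R}_V(D)$ already forces $m\geq\mathrm{ord}^V$ of the underlying operator, so that once $\theta$ has been adjoined the bifiltration condition $F_{d,k}(D)=F_d(D)\cap V_k(D)$ degenerates into a pure bound on the order in $\partial_x,\partial_t$ — which is precisely what makes the weight $\mathbf 1$ on the $\partial$'s together with weight $0$ on $\theta$ come out correctly.
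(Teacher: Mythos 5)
Your proof is correct, and since the paper dismisses this lemma with ``The following lemma is clear'' and gives no argument at all, your direct verification (explicit map on generators, PBW monomial bookkeeping for injectivity, and the observation that membership in $\mathcal{R}_V(D)$ already absorbs the $V$-constraint so only $|\gamma|+|\delta|\leq d$ survives in $F_d$) is precisely the unwinding the author is implicitly taking for granted.
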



Then the bigraded ring $\textrm{gr}^F(\mathcal{R}_V(D))$ is isomorphic to the commutative polynomial ring $\C[\tilde{x_i}, \tilde{t_i},\theta,\tilde{\partial_{x_i}},\tilde{\partial_{t_i}}]$ endowed with the following multigrading :
\begin{description}
\item
$\textrm{deg}(\tilde{x_i})=(0,0)$, \quad $\textrm{deg}(\tilde{t_i})=(0,-1)$, \quad $\textrm{deg}(\theta)=(0,1)$,
\item
$\textrm{deg}(\tilde{\partial_{x_i}})=(1,0)$,\quad
$\textrm{deg}(\tilde{\partial_{t_i}})=(1,1)$.
\end{description}

Similarly, we define the Rees module associated with $M$ with respect to $V$:
\[
\mathcal{R}_V(M)=\bigoplus_{k\in\mathbb{Z}} V_k(M) \theta^k
\]
where $V_{k}(M)=\bigcup_{d}F_{d,k}(M)$. It admits an $F$-filtration
\[
F_{d}(\mathcal{R}_V(M))=\bigoplus_{k\in\mathbb{Z}} F_{d,k}(M) \theta^k
\]
such that $\textrm{gr}^{F}(\mathcal{R}_{V}(M))$ is isomorphic to
\[
\bigoplus_{d,k} \frac{F_{d,k}(M)}{F_{d-1,k}(M)} \theta^k.
\]
Conversely, as it has been stated before, there exists a dehomogenizing functor $\rho_V$, from the category of $F$-filtered graded $\mathcal{R}_V(D)$-modules to the category of bifiltered $D$-modules. 
 A $\mathcal{R}_V(D)$-module is said to be $\theta$-saturated if the action of $\theta$ on this module is injective.
The functors $\rho_V$ and $\mathcal{R}_V$ give an equivalence of categories between the category of $\theta$-saturated $\mathcal{R}_V(D)$-modules with good $F$-filtrations and the category of $D$-modules with good bifiltrations. Moreover these functors are exact.

\begin{proof}[Proof of Proposition \ref{prop12}.]
$\mathcal{R}_F(M)$ is a finite type $D^{(h)}$-module isomorphic as a $V$-filtered graded $D^{(h)}$-module to a quotient of $(D^{(h)})^r[\mathbf{m}]$. A presentation of $\mathcal{R}_F(M)$ can be obtained by means of $F$-adapted Gr\"obner bases. By replacing $D$ by $D^{(h)}$ in \cite{oaku01b}, section 3, we can construct a $V$-adapted free resolution of $\mathcal{R}_F(M)$. Dehomogenizing this resolution provides a bifiltered free resolution of $M$.

We can use also the $V$-homogenization. Using \cite{oaku01b}, section 3, we construct a presentation of $\mathcal{R}_V(M)$. We take a bigraded free resolution of $\textrm{gr}^F\mathcal{R}_V(M)$, which can be lifted to a $F$-adapted resolution of $\mathcal{R}_V(M)$, as in \cite{granger04}, Proposition 2.7. Taking $\rho_V$ gives a bifiltered free resolution of $M$. 
\end{proof}

\begin{definition}
The $K$-polynomial of $D^r[\mathbf{n}][\mathbf{m}]$ with respect to $(F,V)$ is defined by 
\[
K_{F,V}(D^r[\mathbf{n}][\mathbf{m}];T_1,T_2)=\sum_i T_1^{\mathbf{n}_i} T_2^{\mathbf{m}_i}\in \mathbb{Z}[T_1,T_2,T_2^{-1}].
\]
The $K$-polynomial of $M$ with respect to $(F,V)$ is defined by 
\[
K_{F,V}(M;T_1,T_2)=\sum_i (-1)^i K_{F,V}(D^{r_i}[\mathbf{n}^{(i)}][\mathbf{m}^{(i)}];T_1,T_2) \in \mathbb{Z}[T_1,T_2,T_2^{-1}].
\]
\end{definition}

\begin{prop}\label{prop7}
The definition of $K_{F,V}(M;T_1,T_2)$ does not depend on the bifiltered free resolution.
\end{prop}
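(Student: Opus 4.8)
The plan is to reduce the statement to the commutative invariance result, Proposition \ref{prop1}, exactly as was done in the $F$-filtered case (Proposition \ref{prop4}), but now using one of the two Rees constructions introduced above so that the two filtration parameters $T_1,T_2$ are both accounted for. The cleanest route is to go through $\mathcal{R}_V(M)$ and its $F$-filtration. First I would observe that a bifiltered free resolution of $M$,
\[
0\to D^{r_{\delta}}[\mathbf{n}^{(\delta)}][\mathbf{m}^{(\delta)}]\to\cdots\to D^{r_0}[\mathbf{n}^{(0)}][\mathbf{m}^{(0)}]\to M\to 0,
\]
is sent by the exact functor $\mathcal{R}_V$ to an $F$-filtered exact sequence of $\mathcal{R}_V(D)$-modules; applying $\textrm{gr}^F$ (using that $\textrm{gr}^F$ of the resolution stays exact, as the maps are strictly filtered, which is part of what ``bifiltered free resolution'' means) yields a bigraded free resolution of $\textrm{gr}^F\mathcal{R}_V(M)$ over the commutative polynomial ring $R:=\textrm{gr}^F(\mathcal{R}_V(D))=\C[\tilde{x},\tilde{t},\theta,\tilde{\partial_x},\tilde{\partial_t}]$ with the $\mathbb{Z}^2$-multigrading written out just before Lemma \ref{lemme1}.

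The key bookkeeping step is to check that the multigraded shift of the free module obtained from $D^r[\mathbf{n}][\mathbf{m}]$ is exactly $R^r$ with the $j$-th summand shifted by $(n_j,m_j)$, so that its commutative $K$-polynomial in the sense of Section 1 equals $\sum_j T_1^{n_j}T_2^{m_j}$, i.e.\ matches the definition of $K_{F,V}(D^r[\mathbf{n}][\mathbf{m}];T_1,T_2)$. This is a direct comparison of the bifiltration $F_{d,k}(D^r[\mathbf{n}][\mathbf{m}])=\bigoplus_i F_{d-n_i,k-m_i}(D)$ with the bigrading conventions; the first index $d$ becomes the $T_1$-degree and the second index $k$ becomes the $T_2$-degree, and the shift $[\mathbf{n}][\mathbf{m}]$ translates into the shift $R[(n_i,m_i)]$ in the notation $S[b]$ of Section 1. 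Granting this, the alternating sum identity
\[
K_{F,V}(M;T_1,T_2)=\sum_i(-1)^i K_{F,V}(D^{r_i}[\mathbf{n}^{(i)}][\mathbf{m}^{(i)}];T_1,T_2)
=\sum_i(-1)^i K(\mathcal{L}_i;T_1,T_2)=K(\textrm{gr}^F\mathcal{R}_V(M);T_1,T_2)
\]
holds, where $\mathcal{L}_i$ denotes the $i$-th commutative free module in the graded resolution. Since Proposition \ref{prop1} says the right-hand side is independent of the chosen commutative resolution, and any bifiltered free resolution of $M$ produces such a commutative resolution by the above procedure, $K_{F,V}(M;T_1,T_2)$ depends only on $M$.

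The main obstacle I anticipate is the exactness/strictness issue: to pass from a bifiltered exact sequence to an exact sequence of associated graded modules one needs the maps in the resolution to be strictly bifiltered (equivalently, the $F$-filtered maps of $\mathcal{R}_V$-modules are strict), and one needs $\mathcal{R}_V(M)$ to be $\theta$-saturated so that the equivalence of categories at the end of the section applies. Both points are exactly what the cited equivalence of categories (the $\rho_V$/$\mathcal{R}_V$ statement, analogous to \cite{granger04}, Proposition 3.6) and the construction of bifiltered free resolutions in Proposition \ref{prop12} are designed to guarantee, so the argument is really just invoking that machinery in the right order. A secondary, purely notational point is to make sure the half-infinite grading in the $T_2$-direction (negative powers of $T_2$ occur, cf.\ $\textrm{deg}(\tilde t_i)=(0,-1)$) does not cause problems for the commutative $K$-polynomial; but Section 1 allows arbitrary $b\in\mathbb{Z}^d$ and $K$ lives in the Laurent ring $\mathbb{Z}[T_1,T_2,T_2^{-1}]$, so this is automatically accommodated. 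One could alternatively run the whole argument through $\mathcal{R}_F(M)$ and $\textrm{gr}^V(D^{(h)})$; this gives the same $K_{F,V}$ and provides a consistency check, but is not needed for the proof.
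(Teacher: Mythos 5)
Your proposal is correct and follows essentially the same route as the paper: pass to the bigraded free resolution of $\mathrm{gr}^F(\mathcal{R}_V(M))$ over the commutative ring $\mathrm{gr}^F(\mathcal{R}_V(D))$ and invoke Proposition~\ref{prop1}. The paper's own proof is just a terse version of exactly this argument, and your additional remarks on the shift bookkeeping and the role of $\theta$-saturatedness/strictness supply detail the paper leaves implicit.
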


\begin{proof}[Proof of Proposition \ref{prop7}]
A bifiltered free resolution of $M$ induces a bigraded free resolution of $\textrm{gr}^{F}(\mathcal{R}_{V}(M))$. Thus $K_{F,V}(M;T_1,T_2)=K(\textrm{gr}^{F}(\mathcal{R}_{V}(D));T_1,T_2)$ and we can apply Proposition \ref{prop1}.
\end{proof}

Let $\mathbb{K}=\textrm{Frac}(\C[x_{1},\dots,x_{n}])$. Instead of $D$, we shall work with $\mathbb{K}\otimes D$. This has no influence on the bifiltration.

\begin{definition}
We denote by $\mathcal{C}_{F,V}(M;T_1,T_2)$ the sum of the terms whose total degree in $T_1,T_2$ equals 
$\textrm{codim}\,(K\otimes\textrm{gr}^{F}(\mathcal{R}_{V}(M)))$ in the expansion of 
$K_{F,V}(M;1-T_1,1-T_2)$. This is the multidegree of $M$ with respect to $(F,V)$.
\end{definition}

\begin{theo}\label{theo1}
$\mathcal{C}_{F,V}(M;T_1,T_2)$ does not depend on the good bifiltration.
\end{theo}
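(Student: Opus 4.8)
The plan is to reduce the statement to an already-established invariance, namely the genericity result of Proposition \ref{prop10} together with the independence of the $K$-polynomial under changes of presentation. The subtlety — exactly as for Proposition \ref{prop5} in the $F$-graded case — is that a good bifiltration is not unique, and two good bifiltrations of the same $M$ need not produce isomorphic modules $\mathrm{gr}^F(\mathcal{R}_V(M))$; only the multidegree should coincide. So the first step is to make precise what varies: a good bifiltration of $M$ is the same as a $V$-filtered presentation of the $\theta$-saturated $\mathcal{R}_V(D)$-module $\mathcal{R}_V(M)$, and by the equivalence of categories (\cite{granger04}, Proposition 3.6, in the $\rho_V$ form recalled above) two good bifiltrations give two such presentations of modules that differ at most by shift and by the ambiguity inherent in the good-filtration data.

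The second step is to follow the argument of \cite{laurent88} suggested by Oaku in the acknowledgments. Concretely, given two good bifiltrations $F_{d,k}(M)$ and $F'_{d,k}(M)$, one interpolates: for each pair one builds a common bifiltration, or one uses the standard fact that any two good bifiltrations are \emph{equivalent} in the sense that there are integers $a,b$ with $F_{d,k}\subset F'_{d+a,k+b}$ and $F'_{d,k}\subset F_{d+a,k+b}$. From this one gets, after passing to $\mathbb{K}\otimes -$, that the bigraded modules $\mathbb{K}\otimes\mathrm{gr}^F(\mathcal{R}_V(M))$ attached to the two bifiltrations have the same class in the appropriate Grothendieck group up to the monomial factors $T_1^aT_2^b$; since multidegree in the positively-multigraded-in-part situation is read off the leading homogeneous part of $K_{F,V}(M;1-T_1,1-T_2)$ and is unaffected by such shifts (the shift multiplies the $K$-polynomial by a unit that is $1+O(T_1,T_2)$ after substitution), the multidegrees agree. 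Here Proposition \ref{prop3}, applied to the positively multigraded ring $\mathbb{K}\otimes\mathrm{gr}^F(\mathcal{R}_V(D))$ — note the grading by $\deg\tilde\partial_{x_i}=(1,0)$, $\deg\tilde\partial_{t_i}=(1,1)$, $\deg\theta=(0,1)$ makes the relevant graded pieces finite-dimensional over $\mathbb{K}$ once $x$ is inverted — guarantees that $\mathcal{C}_{F,V}$ is genuinely the lowest-degree part and that $\mathrm{codim}$ is well defined, so the comparison is legitimate.

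The third step handles the genericity: we replace the literal equality of $K$-polynomials by the statement that both bifiltrations yield, via Gröbner-basis computation in $\mathcal{R}_V(D)$ (or $D^{(h)}$), initial modules whose multidegree is computed by Proposition \ref{prop10}, and that the generic specialization is independent of which presentation one starts from because it equals $\mathcal{C}(\mathbb{K}\otimes -)$ for the module over the parametrized ring. Alternatively, and more cleanly, one invokes Proposition \ref{prop7}: once we know the two good bifiltrations give the \emph{same} bigraded module up to shift, independence of the bifiltered free resolution (already proved) finishes the job, so the only new content is the shift comparison between distinct good bifiltrations.

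The main obstacle is precisely that shift/equivalence comparison — controlling how $\mathrm{gr}^F(\mathcal{R}_V(M))$ changes when the good bifiltration changes, and checking that the change does not move the lowest-degree homogeneous component of the substituted $K$-polynomial (equivalently, that the codimension of $\mathbb{K}\otimes\mathrm{gr}^F(\mathcal{R}_V(M))$ is itself a bifiltration invariant, which is why it could be fixed in the definition). This is the part where one genuinely needs the \cite{laurent88} interpolation argument rather than a formal consequence of Section 1; everything else is bookkeeping through the Rees-module dictionary and the already-cited commutative results.
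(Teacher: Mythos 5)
There is a genuine gap at the heart of the proposal. You claim that if $F$ and $F'$ are equivalent good bifiltrations (each sandwiched by a shift of the other), then the two bigraded modules $\mathbb{K}\otimes\mathrm{gr}^F(\mathcal{R}_V(M))$ and $\mathbb{K}\otimes\mathrm{gr}^{F'}(\mathcal{R}_V(M))$ ``have the same class in the appropriate Grothendieck group up to the monomial factors $T_1^aT_2^b$,'' so the $K$-polynomials differ only by a unit of the form $1+O(T_1,T_2)$ after substitution. This is false, and it is false already in the single-filtration setting: two equivalent good $F$-filtrations on a $D$-module give associated graded modules that are in general not isomorphic, not shifts of each other, and not equal in the Grothendieck group; their $K$-polynomials genuinely differ. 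What they \emph{do} share is the algebraic cycle --- the radical of the annihilator and the multiplicities along the maximal-dimensional components --- and it is Proposition \ref{prop2} that converts ``same algebraic cycle'' into ``same multidegree.'' The paper's proof is therefore structured around establishing exactly these two facts (equality of radicals and of multiplicities), and this is \emph{all} it can establish: the $K$-polynomials themselves are not invariants of $M$. Your step two short-circuits the whole argument and would, if correct, prove a much stronger (and wrong) statement, namely invariance of $K_{F,V}(M;T_1,T_2)$ itself.

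A second, smaller error: you invoke Proposition \ref{prop3} by asserting that $\mathbb{K}\otimes\mathrm{gr}^F(\mathcal{R}_V(D))$ is positively multigraded once $x$ is inverted. It is not --- the paper explicitly points this out --- because $\tilde{t_i}\theta$ has bidegree $(0,0)$, so the degree-$(0,0)$ piece $\mathbb{K}[(\tilde{t_i}\theta)]$ is infinite-dimensional over $\mathbb{K}$. This non-positivity is precisely why the multidegree is \emph{defined} by hand, fixing the homogeneous degree to be $\mathrm{codim}\,\mathbb{K}\otimes\mathrm{gr}^F(\mathcal{R}_V(M))$, rather than being automatically the lowest-degree part as in the positive case. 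Your final paragraph correctly identifies that the real work is the \cite{laurent88} interpolation (first reducing to $F'\subset F$, then a bounded-gap case handled by induction on $r$ and exact sequences plus Lemma \ref{lemme2}, then the general case via a stabilizing ascending chain of Rees submodules), but the body of your proposal replaces that argument with the incorrect shift claim, so the gap you acknowledge at the end is never actually filled.
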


\begin{proof}
As before we take the Rees algebra with respect to $V$. We get 
\[
\mathcal{R}_V(\mathbb{K}\otimes D)\simeq \mathbb{K}[\tilde{t_i},\theta]\langle\tilde{\partial_{x_i}},\tilde{\partial_{t_i}}\rangle
\]
and
 \[
A:=\textrm{gr}^{F}(\mathcal{R}_V(\mathbb{K}\otimes D))\simeq \mathbb{K}[\tilde{t_i},\theta,\tilde{\partial_{x_i}},\tilde{\partial_{t_i}}].
\]
The ring $A$ is bigraded as follows:
\begin{description}
\item
\quad $\textrm{deg}(\tilde{t_i})=(0,-1)$, \quad $\textrm{deg}(\theta)=(0,1)$,
\quad
$\textrm{deg}(\tilde{\partial_{x_i}})=(1,0)$,\quad
$\textrm{deg}(\tilde{\partial_{t_i}})=(1,1)$.
\end{description}
This is not a positive grading since $\mathbb{K}[(\tilde{t_i}\theta)]=A_{0,0}$ is infinite over $\mathbb{K}$. Let 
\[
\tilde{M}=\mathbb{K}\otimes \textrm{gr}^{F}(\mathcal{R}_V(M)).
\]
 A bifiltered free resolution of $M$ induces a bigraded free resolution of $\tilde{M}$, thus $K_{F,V}(M;T_1,T_2)=K(\tilde{M};T_1,T_2)$. 

Let us endow $M$ with another good bifiltration $(F_{d,k}^{'}(M))_{d,k}$. We denote by $M'$ the module $M$ endowed with this bifiltration. 
In view of Proposition \ref{prop2}, it is sufficient to prove
\begin{itemize}
\item $\textrm{rad}(\textrm{ann}\tilde{M})=\textrm{rad}(\textrm{ann}\tilde{M'})$
\item For any prime ideal $\mathfrak{p}$ of $A$,
 $\textrm{mult}_{\mathfrak{p}}\tilde{M}=\textrm{mult}_{\mathfrak{p}}\tilde{M'}$.
\end{itemize}

To prove these two assertions, we argue exactly in the same way as in the proof of Proposition 1.3.2 of \cite{laurent88}. For the convenience of the reader, we give here the details.

We shall also use the behaviour of dimensions and multiplicities in short exact sequences. 

\begin{lemme}[\cite{cimpa1}, Proposition 24]\label{lemme2}
Let $$0\to E\to F\to G\to 0$$
 be an exact sequence of finite type $A$-modules, and let $\mathfrak{p}$ be a prime ideal of $A$. Then
\begin{enumerate}
\item $\mathrm{dim} F_{\mathfrak{p}}=\mathrm{max}(\mathrm{dim} E_{\mathfrak{p}},\mathrm{dim} G_{\mathfrak{p}}).$
\item If $\mathrm{dim} E_{\mathfrak{p}}=\mathrm{dim} G_{\mathfrak{p}}$, then
$\mathrm{mult}_{\mathfrak{p}}F=\mathrm{mult}_{\mathfrak{p}}E+\mathrm{mult}_{\mathfrak{p}}G$.\\
If $\mathrm{dim} E_{\mathfrak{p}}<\mathrm{dim} G_{\mathfrak{p}}$, then
$\mathrm{mult}_{\mathfrak{p}}F=\mathrm{mult}_{\mathfrak{p}}G$.\\
If $\mathrm{dim} E_{\mathfrak{p}}>\mathrm{dim} G_{\mathfrak{p}}$, then
$\mathrm{mult}_{\mathfrak{p}}F=\mathrm{mult}_{\mathfrak{p}}E$.\\
\end{enumerate}
\end{lemme}

We will follow the proof of \cite{laurent88} and indicate at each step how to prove :
\begin{description}
\item[Claim 1.] $\textrm{rad}(\textrm{ann}\tilde{M})\subset\textrm{rad}(\textrm{ann}\tilde{M'})$,
\item[Claim 2.] 
$\textrm{mult}_{\mathfrak{p}}\tilde{M}\geq\textrm{mult}_{\mathfrak{p}}\tilde{M'}$
if $\textrm{dim} \tilde{M}_{\mathfrak{p}}=\textrm{dim} \tilde{M'}_{\mathfrak{p}}$. 
\end{description}

First, since $F_{d,k}(M)$ and $F'_{d,k}(M)$ are good bifiltrations, there exist $d_{0},k_{0}\in\mathbb{N}$ such that for any $d,k$,  $F_{d,k}(M)\subset F'_{d+d_{0},k+k_{0}}(M)$. Let us denote by $M''$ the module $M$ endowed with the bifiltration $(F'_{d+d_{0},k+k_{0}}(M))_{d,k}$. The algebraic cycle associated with $\tilde{M'}$ is equal to the algebraic cycle associated with $\tilde{M''}$. Thus we can suppose $F'_{d,k}(M)\subset F_{d,k}(M)$.

Let us introduce the Rees algebra $\mathcal{R}(D)$ with respect to the bifiltration $F,V$, i.e. 
\[
\mathcal{R}(D)=\bigoplus_{d,k}F_{d,k}(D)\tau^{d}\theta^{k}.
\]
This is isomorphic to the $\C$-algebra generated by $x_{i}$, $t_{i}\theta^{-1}$, $\partial_{x_{i}}\tau$, 
$\partial_{t_{i}}\tau\theta$, $\tau$ and $\theta$, subject to the relations 
$[\partial_{x_{i}}\tau,x_{i}]=\tau$ and $[\partial_{t_{i}}\tau\theta,t_{i}\theta^{-1}]=\tau$. This is a noetherian algebra.

We define also the Rees module $\mathcal{R}(M)=\bigoplus_{d,k}F_{d,k}(M)\tau^{d}\theta^{k}$.
We have 
\[
\textrm{gr}^{F}(\mathcal{R}_V(M))\simeq \frac{\mathcal{R}(M)}{\tau\mathcal{R}(M)}.
\]

Let us suppose moreover that there exists $r\geq 1$ such that for any $d,k$, 
$F'_{d,k}(M)\subset F_{d,k}(M)\subset F'_{d+r,k}(M)$. Let $F''_{d,k}(M)=F_{d,k}(M)\cap F'_{d+1,k}(M)$.
We have 
\[F'_{d,k}(M)\subset F''_{d,k}(M)\subset F'_{d+1,k}(M)\quad \textrm{and}\quad 
F_{d-r+1,k}(M)\subset F''_{d,k}(M)\subset F_{d,k}(M). 
\]
By induction on $r$ we can suppose $r=1$, i.e. 
$\tau\mathcal{R}(M)\subset \mathcal{R}(M')\subset \mathcal{R}(M)$. Then we have the following exact sequences
of $\textrm{gr}^{F}\mathcal{R}_{V}(D)$-modules of finite type:
\[
0\to \frac{\tau\mathcal{R}(M)}{\tau\mathcal{R}(M')}\to \frac{\mathcal{R}(M')}{\tau\mathcal{R}(M')}
\to\frac{\mathcal{R}(M')}{\tau\mathcal{R}(M)}\to 0
\]
\[
0\to \frac{\mathcal{R}(M')}{\tau\mathcal{R}(M)}\to \frac{\mathcal{R}(M)}{\tau\mathcal{R}(M)}
\to\frac{\mathcal{R}(M)}{\mathcal{R}(M')}\to 0.
\]
After tensorizing by $\mathbb{K}$, we deduce $\textrm{rad}(\textrm{ann}\tilde{M})=\textrm{rad}(\textrm{ann}\tilde{M'})$. 
Then using Lemma \ref{lemme2}, we get $\textrm{mult}_{\mathfrak{p}}\tilde{M}=\textrm{mult}_{\mathfrak{p}}\tilde{M'}$. 

Let $F''_{d,k}(M)=F_{d,k}(M)\cap (\cup_{i}F'_{i,k}(M))$. We have :
\[
\mathcal{R}(M'')=\mathcal{R}(M)\cap(\cup_{i\geq 0}\tau^{-i}\mathcal{R}(M')).
\]
Let $\mathcal{L}_{j}=\mathcal{R}(M)\cap (\cup_{0\leq i\leq j}\tau^{-i}\mathcal{R}(M'))$. This is an ascending chain of finite type sub-modules of $\mathcal{R}(M)$. Hence it is stationary and there exists an integer $r\geq 0$ such that
\[
\mathcal{R}(M'')=\mathcal{R}(M)\cap \tau^{-r}\mathcal{R}(M').
\]
In particular $\mathcal{R}(M'')$ is of finite type and $F''_{d,k}(M)$ is a good bifiltration.\\ 
We have $\tau^{r}\mathcal{R}(M'')\subset \mathcal{R}(M')\subset\mathcal{R}(M'')$, i.e.\
we are in the situation of the previous paragraph. This implies $\textrm{rad}(\textrm{ann}\tilde{M''})=\textrm{rad}(\textrm{ann}\tilde{M'})$ and $\textrm{mult}_{\mathfrak{p}}\tilde{M''}=\textrm{mult}_{\mathfrak{p}}\tilde{M'}$. 

On the other hand, we have a canonical injection
\[
\frac{\mathcal{R}(M'')}{\tau\mathcal{R}(M'')}\to \frac{\mathcal{R}(M)}{\tau\mathcal{R}(M)}.
\]
Then $\textrm{rad}(\textrm{ann}\tilde{M''})\subset\textrm{rad}(\textrm{ann}\tilde{M})$, and Claim 1 is proved. 
From this canonical injection, we deduce Claim 2 by using Lemma \ref{lemme2}. 
\end{proof}

\section{Nicely bifiltered $D$-modules}

In this section we consider a bifiltered $D$-module satisfying the following condition:

\begin{definition}
Let $M$ be a $D$-module endowed with a good bifiltration. We say that the bifiltration is \emph{nice} if for any $d,k$, 
\begin{equation}
\left(\bigcup_{d'}F_{d',k}(M)\right)\bigcap \left(\bigcup_{k'} F_{d,k'}(M)\right)=F_{d,k}(M).
\end{equation}
In such a case, we say that $M$ is nicely bifiltered.
\end{definition}

\begin{definition}
Let $N$ be a bigraded $\textrm{gr}^V(D^{(h)})$-module. $N$ is said to be \emph{$h$-saturated} if the map 
$N\to N$ sending $m$ to $hm$ is injective.\\
Let $N$ be a bigraded $\textrm{gr}^F(\mathcal{R}_V(D))$-module. $N$ is said to be \emph{$\theta$-saturated} if the map 
$N\to N$ sending $m$ to $\theta m$ is injective.
\end{definition}
\begin{lemme}\label{lemme8}
The following are equivalent :
\begin{enumerate}
\item $M$ is nicely bifiltered,
\item $\mathrm{gr}^{V}(\mathcal{R}_{F}(M))$ is $h$-saturated,
\item $\mathrm{gr}^{F}(\mathcal{R}_{V}(M))$ is $\theta$-saturated.
\end{enumerate}
\end{lemme}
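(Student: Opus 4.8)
The plan is to reduce all three conditions to a single ``nearest--neighbour'' identity on the bifiltration and then to propagate it. First observe that niceness is literally the identity $F_d(M)\cap V_k(M)=F_{d,k}(M)$ for all $d,k$, where $F_d(M)=\bigcup_{k'}F_{d,k'}(M)$ and $V_k(M)=\bigcup_{d'}F_{d',k}(M)$.

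I would begin by unwinding (2) and (3) using the descriptions of the associated graded objects from Section~3. From $\mathcal{R}_F(M)=\bigoplus_d F_d(M)\tau^d$ with $V_k(\mathcal{R}_F(M))=\bigoplus_d F_{d,k}(M)\tau^d$ one gets $\mathrm{gr}^V(\mathcal{R}_F(M))\simeq\bigoplus_{d,k}\frac{F_{d,k}(M)}{F_{d,k-1}(M)}\tau^d$, and since $h$ acts as multiplication by $\tau$, it sends the class of $m\tau^d$ (with $m\in F_{d,k}(M)$) to the class of $m\tau^{d+1}$ in $\frac{F_{d+1,k}(M)}{F_{d+1,k-1}(M)}$; the target class vanishes exactly when $m\in F_{d+1,k-1}(M)$, the source exactly when $m\in F_{d,k-1}(M)$. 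Hence (2) is equivalent to $F_{d,k}(M)\cap F_{d+1,k-1}(M)=F_{d,k-1}(M)$ for all $d,k$, i.e.\ (shifting $k$)
\[
(\star)\qquad F_{d+1,k}(M)\cap F_{d,k+1}(M)=F_{d,k}(M)\quad\text{for all }d,k.
\]
Running the identical computation on $\mathrm{gr}^F(\mathcal{R}_V(M))\simeq\bigoplus_{d,k}\frac{F_{d,k}(M)}{F_{d-1,k}(M)}\theta^k$ with the action of $\theta$ shows that (3) is equivalent to the \emph{same} identity $(\star)$; equivalently, both say that in the bi--Rees module $\mathcal{R}(M)=\bigoplus_{d,k}F_{d,k}(M)\tau^d\theta^k$ one has $\tau\mathcal{R}(M)\cap\theta\mathcal{R}(M)=\tau\theta\mathcal{R}(M)$. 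This settles (2) $\Leftrightarrow$ (3).

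It remains to prove (1) $\Leftrightarrow$ $(\star)$. The implication (1) $\Rightarrow$ $(\star)$ is immediate, since $F_{d+1,k}(M)\cap F_{d,k+1}(M)\subseteq V_k(M)\cap F_d(M)=F_{d,k}(M)$. For $(\star)\Rightarrow$ (1) I would establish by a two--step induction the strengthened statement $F_{d+j,k}(M)\cap F_{d,k+l}(M)=F_{d,k}(M)$ for all $d,k$ and all $j,l\ge 0$. First fix $j=1$ and induct on $l$: given $m\in F_{d+1,k}(M)\cap F_{d,k+l+1}(M)$, enlarging the $V$--index gives $m\in F_{d+1,k+l}(M)\cap F_{d,k+l+1}(M)=F_{d,k+l}(M)$ by $(\star)$ at $(d,k+l)$, and then the inductive hypothesis at $(d,k)$ yields $m\in F_{d,k}(M)$. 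Then induct on $j$ in the same way: using the case $j=1$ (all $l$) at $(d+j,k)$ descends $F_{d+j+1,k}(M)\cap F_{d,k+l}(M)$ into $F_{d+j,k}(M)\cap F_{d,k+l}(M)$, after which the inductive hypothesis closes the argument. Finally, if $m\in F_d(M)\cap V_k(M)$ then $m\in F_{d,k+l}(M)$ and $m\in F_{d+j,k}(M)$ for $l$ and $j$ large, hence $m\in F_{d,k}(M)$; this is (1).

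The only real obstacle is this last propagation: turning the nearest--neighbour identity $(\star)$ into the ``arbitrary rectangle'' identity. Each inductive step is an elementary manipulation in the lattice of subspaces $F_{d,k}(M)$, the one trick being to enlarge whichever index is needed so as to reach a configuration where $(\star)$ or a previously proved case applies; no genuine technical difficulty lies beyond this bookkeeping. Everything else — the explicit shapes of $\mathrm{gr}^V(\mathcal{R}_F(M))$ and $\mathrm{gr}^F(\mathcal{R}_V(M))$ and the fact that $h$ and $\theta$ act by multiplication — is already recorded in Section~3.
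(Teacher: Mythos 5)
Your proof is correct and follows essentially the same route as the paper: unwind $h$-saturatedness and $\theta$-saturatedness via the explicit descriptions of $\mathrm{gr}^V(\mathcal{R}_F(M))$ and $\mathrm{gr}^F(\mathcal{R}_V(M))$, observe that both amount to the nearest-neighbour identity $F_{d+1,k}(M)\cap F_{d,k+1}(M)=F_{d,k}(M)$, and then show this propagates to niceness. The only difference is that the paper simply cites [Arcadias, \emph{Minimal resolutions of geometric $\mathcal{D}$-modules}, Lemma 1.1] for the final equivalence, whereas you supply the inductive propagation argument explicitly; that argument is correct and is presumably the content of the cited lemma.
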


\begin{proof}
By definition, 2) and 3) are equivalent to the following : $\forall d,k, F_{d+1,k}(M)\cap F_{d,k+1}(M) \subset F_{d,k}(M)$. By \cite{moi}, Lemma 1.1, this is equivalent to 1).
\end{proof}

\paragraph{$h$-saturatedness and Gr\"obner bases.}
Let us give a criterion for $h$-saturatedness using Gr\"obner bases. Using the preceding lemma, that leads to a criterion for the niceness of a bifiltration.
Let in this paragraph $D^{(h)}=\C[x_1,\dots,x_n]\langle\partial_1,\dots,\partial_n,h\rangle$. It is graded by setting for any $i$, $\deg x_i=0$, $\deg\partial_i=1$ and $\deg h=1$. 

Let $<''$ be a well-order on $\mathbb{N}^{2n}$, compatible with sums. 
Then we define a well-order $<'$ on $\mathbb{N}^{2n+1}$ by
\[
(\alpha,\beta,k)<'(\alpha',\beta',k') \quad\textrm{iff}\quad
\]
\[
\left\{
\begin{array}{l}
\vert\beta\vert+k <\vert\beta'\vert+k' \\
\textrm{or} \quad \vert\beta\vert+k =\vert\beta'\vert+k'
\quad\textrm{and}\quad
\vert\beta\vert<\vert\beta'\vert\\
\textrm{or} \quad \vert\beta\vert+k =\vert\beta'\vert+k',
\vert\beta\vert=\vert\beta'\vert\ \textrm{and}\ 
(\alpha,\beta)<''(\alpha',\beta').
\end{array}
\right.
\]
This is a well-order on the monomials of $D^{(h)}$ adapted to the $F$-filtration.
To deal with submodules of $(D^{(h)})^r$, we define a well-ordering $<$ on $\mathbb{N}^{2n+1}\times \{1,\dots,r\}$ by
 \[
(\alpha,\beta,k,i)<(\alpha',\beta',k',i') \quad\textrm{iff}\quad
\left\{
\begin{array}{l}
(\alpha,\beta,k)<'(\alpha',\beta',k') \\
\textrm{or} \quad (\alpha,\beta,k)=(\alpha',\beta',k')
\quad\textrm{and}\quad i<i'.
\end{array}
\right.
\]
Note that if $(\alpha,\beta,k,i)\geq (\alpha',\beta',k',i')$ and $\vert\beta\vert+k=\vert\beta'\vert+k'$, then $k\leq k'$.
If $P\in W^r$, we denote by $\textrm{in}(P)$ the leading monomial of $P$.

\begin{definition}
Let $P_1,\dots,P_s$ be a Gr\"obner base of a homogeneous submodule $N\subset (D^{(h)})^r$. Such a base is called \emph{minimal} if 
\[
\forall i,\ \mathrm{Exp}P_i\notin \bigcup_{j\neq i}\left(\mathrm{Exp}P_j+\mathbb{N}^{2n+1}\right).
\]
\end{definition}

\begin{prop}\label{prop11}
The following assertions are equivalent :
\begin{enumerate}
\item $(D^{(h)})^r/N$ is $h$-saturated.
\item For any minimal homogeneous Gr\"obner base $P_1,\dots,P_s$ of $N$, 
for any $i$, $h$ does not divide $\mathrm{in}\,P_i$.
\item There exists a minimal homogeneous Gr\"obner base $P_1,\dots,P_s$ of $N$, such that
for any $i$, $h$ does not divide $\mathrm{in}\,P_i$.
\end{enumerate}
\end{prop}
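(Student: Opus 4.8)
The plan is to reduce everything to the monomial module generated by the leading terms of a Gröbner basis. First I would recall that $h$-saturatedness of $(D^{(h)})^r/N$ means exactly that $h$ acting on this quotient is injective, i.e.\ that whenever $hP\in N$ for some $P\in(D^{(h)})^r$, then already $P\in N$. The key observation, which makes the ordering $<$ the right tool, is contained in the remark preceding the statement: if $\mathrm{Exp}\,P\ge\mathrm{Exp}\,Q$ (for the order $<$ on $\mathbb{N}^{2n+1}\times\{1,\dots,r\}$) and the two leading monomials have the same $F$-degree $|\beta|+k=|\beta'|+k'$, then the exponent of $h$ in $\mathrm{Exp}\,P$ is $\le$ that in $\mathrm{Exp}\,Q$. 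Since $N$ is homogeneous for the grading $\deg x_i=0$, $\deg\partial_i=\deg h=1$, a minimal homogeneous Gröbner basis $P_1,\dots,P_s$ has all its elements homogeneous, so within each graded piece the above monotonicity of the $h$-exponent applies.

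The main structural fact I would establish is: $(D^{(h)})^r/N$ is $h$-saturated if and only if $h$ is a nonzerodivisor on $(D^{(h)})^r/\mathrm{in}(N)$, where $\mathrm{in}(N)$ is the initial module with respect to $<$. The ``only if'' direction uses that $\mathrm{gr}$ with respect to the $F$-filtration (which $<$ refines) preserves the property that $h$ is injective, using that $h$ is itself $F$-homogeneous of degree $1$ and that $\mathrm{in}$ of a filtered-homogeneous element behaves well — concretely, if $h$ kills a nonzero class in $\mathrm{gr}$ one lifts to a witness of non-$h$-saturatedness in $(D^{(h)})^r/N$ and vice versa, via a standard Gröbner-basis normal-form argument. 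Once this reduction is in hand, $h$ being a nonzerodivisor on $(D^{(h)})^r/\mathrm{in}(N)$ is a purely combinatorial condition on the monomial ideal-type object $\mathrm{in}(N)$: since $\mathrm{in}(N)$ is generated by $\mathrm{in}\,P_1,\dots,\mathrm{in}\,P_s$, and multiplication by $h$ on a quotient by a monomial module is injective precisely when no minimal generator is divisible by $h$, this is exactly the statement ``$h\nmid\mathrm{in}\,P_i$ for all $i$.'' This gives $(1)\Leftrightarrow$ (the condition holds for \emph{some} minimal homogeneous Gröbner basis), hence $(1)\Rightarrow(3)$; and since the condition ``$h$ is a nonzerodivisor on $(D^{(h)})^r/\mathrm{in}_<(N)$'' does not refer to the chosen basis, it forces the condition to hold for \emph{every} minimal homogeneous Gröbner basis, giving $(1)\Rightarrow(2)$. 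The implications $(2)\Rightarrow(3)$ is trivial and $(3)\Rightarrow(1)$ follows by running the reduction in reverse: from $h\nmid\mathrm{in}\,P_i$ conclude $h$ is a nonzerodivisor mod $\mathrm{in}(N)$, then lift back to $N$.

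The step I expect to be the main obstacle is the equivalence ``$h$ injective on $(D^{(h)})^r/N$ $\iff$ $h$ injective on $(D^{(h)})^r/\mathrm{in}_<(N)$.'' The forward direction is the delicate one: one must show that if $hP\in N$ but $P\notin N$, then after replacing $P$ by its normal form modulo $N$ one still has $P\ne 0$ with $\mathrm{in}(P)\notin\mathrm{in}(N)$, yet $h\cdot\mathrm{in}(P)\in\mathrm{in}(N)$ — and the subtlety is that $\mathrm{in}(hP)$ need not equal $h\cdot\mathrm{in}(P)$ a priori. This is exactly where the remark about the order is used: because $N$ is $F$-homogeneous we may assume $P$ is $F$-homogeneous of a single degree, and then every monomial of $hP$ has $F$-degree one more than the corresponding monomial of $P$, so $\mathrm{in}(hP)=h\cdot\mathrm{in}(P)$ after all; one then checks $h\cdot\mathrm{in}(P)\in\mathrm{in}(N)$ while $\mathrm{in}(P)\notin\mathrm{in}(N)$, contradicting that $h$ is a nonzerodivisor on the monomial quotient. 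The converse direction uses the homogeneity of the Gröbner basis in the same way to lift an $h$-torsion element from $\mathrm{in}(N)$ back to $N$. Apart from this, the argument is bookkeeping with normal forms and the grading.
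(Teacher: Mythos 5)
Your proposal is correct and routes the equivalence through a different, cleanly stated intermediate claim than the paper. You prove (and use) the structural lemma ``$(D^{(h)})^r/N$ is $h$-saturated $\iff$ $h$ is a nonzerodivisor on $(D^{(h)})^r/\mathrm{in}_<(N)$'', combined with the purely combinatorial observation that the latter holds exactly when no minimal monomial generator of $\mathrm{in}_<(N)$ is divisible by $h$; this simultaneously yields $(1)\Leftrightarrow(2)$ (since the minimal leading exponents depend only on $N$, not on the chosen basis) and $(3)\Rightarrow(1)$. The paper never isolates this statement about the initial module: it proves $(1)\Rightarrow(2)$ directly (if $h\mid\mathrm{in}\,P_i$ then, by homogeneity and the structure of $<$, $h\mid P_i$; $h$-saturatedness gives $P_i/h\in N$, whose leading exponent then violates minimality), and it proves $(3)\Rightarrow(1)$ by a somewhat different trick: dividing $hP=\sum Q_i P_i$ and comparing $\mathrm{ord}^F$ with the total degree to force $h\mid Q_i$ for all $i$, so $P=\sum(Q_i/h)P_i\in N$. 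Both arguments rest on the same two features of the order $<$ — it is compatible with multiplication by $h$, and among monomials of fixed total degree it picks out the one with smallest $h$-exponent — so your route is a natural repackaging rather than a fundamentally new idea, but it is a cleaner and more symmetric organization of the proof. Two small inaccuracies in your write-up: the identity $\mathrm{in}(hP)=h\cdot\mathrm{in}(P)$ holds for all $P$ (not only homogeneous ones), because the order is $h$-compatible; the homogeneity is instead what you need for the other implication, namely that $h\mid\mathrm{in}(Q)$ forces $h\mid Q$. Also, what you call ``$F$-degree'' in the sentence about $hP$ should be the total degree (with $\deg h=1$), since the $F$-order $|\beta|$ is unchanged under multiplication by $h$.
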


\begin{proof}
Let us prove $1)\Rightarrow 2)$
Let  $P_1,\dots,P_s$ be a minimal homogeneous Gr\"obner base of $N$. 
Suppose that there exists $i$ such that $h$ divides $\textrm{in}P_i$.
Then $h$ divides $P_i$ by the definition of $<$. 
By $h$-saturatedness, $P_i/h\in N$. Thus
\[
\textrm{Exp}\frac{P_i}{h}\in\bigcup_{j\neq i}\left(\textrm{Exp}P_j+\mathbb{N}^{2n+1}\right),
\]
then
\[
\textrm{Exp}P_i=h\textrm{Exp}\frac{P_i}{h}\in\bigcup_{j\neq i}\left(\textrm{Exp}P_j+\mathbb{N}^{2n+1}\right),
\]
which contradicts the minimality.

$2)\Rightarrow 3)$ is obvious. Let us show $3)\Rightarrow 1)$.
Let $P\in (D^{(h)})^r$ homogeneous such that $hP\in N$. We shall show that $P\in N$.
By division, $hP=\sum Q_iP_i$ with for any $i$, $Q_i\in D^{(h)}$ homogeneous, 
$\textrm{deg}(Q_iP_i)=\textrm{deg}(hP)$, and 
$\textrm{ord}^{F}(Q_iP_i)\leq\textrm{ord}^{F}(hP)$.

Let us suppose that there exists $i$ such that $h$ does not divide $Q_i$.
Then $\textrm{ord}^{F}Q_i=\textrm{deg}Q_i$. Since $h$ does not divide $P_i$,
we have $\textrm{ord}^{F}P_i=\textrm{deg}P_i$.
Then
\[
\textrm{ord}^{F}(Q_iP_i)=\textrm{ord}^{F}(Q_i)+\textrm{ord}^{F}(P_i)
=\textrm{deg}Q_i+\textrm{deg}P_i=\textrm{deg}(hP).
\]
But 
\[
\textrm{ord}^{F}(Q_iP_i)\leq\textrm{ord}^{F}(hP)<\textrm{deg}(hP),
\]
a contradiction. Thus for any $i$, $h$ divides $Q_i$ and $P=\sum (Q_i/h)P_i\in N$.
\end{proof}

We shall make a link between the $(F,V)$-multidegree and the theory of slopes of Y. Laurent, c.f. \cite{laurent04}.
We consider intermediate filtrations $L$ between $F$ and $V$, denoted by $pF+qV$ with $p>0$, $q>0$, defined by
\[
L_{r}(D)=\sum_{dp+kq\leq r}F_{d,k}(D).
\]
Similarly we endow $M$ with the $L$-filtration 
$
L_{r}(M)=\sum_{dp+kq\leq r}F_{d,k}(M),
$
which is a good filtration since taking a bifiltered free presentation 
\[
 D^{r_1}[\mathbf{n}^{(1)}][\mathbf{m}^{(1)}]\to 
D^{r_0}[\mathbf{n}^{(0)}][\mathbf{m}^{(0)}]\to M\to 0,
\]
we see that $\textrm{gr}^{L}(M)$ is isomorphic to a quotient of 
$\textrm{gr}^{L}(D^{r_0}[p\mathbf{n}^{(0)}+q\mathbf{m}^{(0)}])$. 

On the other hand, since $\textrm{gr}^{V}(M)$ is isomorphic to a quotient of 
$\textrm{gr}^{V}(D^{r_0}[\mathbf{m}^{(0)}])[\mathbf{n}^{(0)}]$, it is endowed with a natural $F$-filtration. Similarly, $\textrm{gr}^{F}(M)$ is isomorphic to a quotient of 
$\textrm{gr}^{F}(D^{r_0}[\mathbf{n}^{(0)}])[\mathbf{m}^{(0)}]$, and it is endowed with a natural $V$-filtration.

In \cite{moi}, we considered also the bigraded module
\[
\textrm{bigr}(M)=\bigoplus_{d,k}\frac{F_{d,k}(M)}{F_{d,k-1}(M)+F_{d-1,k}(M)}
\]
over the ring $\textrm{bigr}(D)\simeq\textrm{gr}^{V}(\textrm{gr}^{F}(D))\simeq \textrm{gr}^{F}(\textrm{gr}^{V}(D))$.

\begin{lemme}\label{lemme3}
If $M$ is nicely bifiltered, we have  
\[
\mathrm{bigr}(M)\simeq\mathrm{gr}^{V}(\mathrm{gr}^{F}(M))\simeq \mathrm{gr}^{F}(\mathrm{gr}^{V}(M)).
\]
\end{lemme}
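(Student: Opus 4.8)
The statement to prove is that for a nicely bifiltered $M$, the three bigraded modules $\textrm{bigr}(M)$, $\textrm{gr}^V(\textrm{gr}^F(M))$ and $\textrm{gr}^F(\textrm{gr}^V(M))$ are isomorphic. My plan is to establish the two isomorphisms with $\textrm{bigr}(M)$ separately; by symmetry (interchanging the roles of $F$ and $V$) it suffices to treat one, say $\textrm{bigr}(M)\simeq\textrm{gr}^V(\textrm{gr}^F(M))$. Recall that $\textrm{gr}^F(M)$ carries its natural $V$-filtration, whose piece in degree $d$ and $V$-level $k$ is the image of $F_{d,k}(M)$ in $F_d(M)/F_{d-1}(M)$, namely $(F_{d,k}(M)+F_{d-1}(M))/F_{d-1}(M)$. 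Taking the associated graded of this along $V$, the $(d,k)$-component of $\textrm{gr}^V(\textrm{gr}^F(M))$ is
\[
\frac{F_{d,k}(M)+F_{d-1}(M)}{F_{d,k-1}(M)+F_{d-1}(M)}
\;=\;\frac{F_{d,k}(M)+F_{d-1,k}(M)+F_{d-1}(M)}{F_{d,k-1}(M)+F_{d-1,k}(M)+F_{d-1}(M)},
\]
where I used that $F_{d-1}(M)=\bigcup_{k'}F_{d-1,k'}(M)\supseteq F_{d-1,k}(M)$. There is an evident surjection from the $(d,k)$-component of $\textrm{bigr}(M)$, namely $F_{d,k}(M)/(F_{d,k-1}(M)+F_{d-1,k}(M))$, onto this quotient, induced by the inclusion $F_{d,k}(M)\hookrightarrow F_{d,k}(M)+F_{d-1}(M)$.

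The heart of the argument is to show this surjection is injective, and this is exactly where niceness enters. Injectivity means: if $a\in F_{d,k}(M)$ lies in $F_{d,k-1}(M)+F_{d-1,k}(M)+F_{d-1}(M)$, then $a\in F_{d,k-1}(M)+F_{d-1,k}(M)$. Writing $a = b + c$ with $b\in F_{d,k-1}(M)+F_{d-1,k}(M)\subseteq F_{d,k}(M)$ and $c\in F_{d-1}(M)$, we get $c = a-b\in F_{d,k}(M)$, so $c\in F_{d-1}(M)\cap F_{d,k}(M)$. Now $F_{d-1}(M)=\bigcup_{k'}F_{d-1,k'}(M)$ and $F_{d,k}(M)\subseteq\bigcup_{d'}F_{d',k}(M)$, so $c$ lies in the intersection $\left(\bigcup_{k'}F_{d-1,k'}(M)\right)\cap\left(\bigcup_{d'}F_{d',k}(M)\right)$. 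But $\bigcup_{k'}F_{d-1,k'}(M)\subseteq\bigcup_{k'}F_{d,k'}(M)$, so by the niceness equation (applied with $d$) this intersection is contained in $F_{d,k}(M)$ — which we already knew — and in fact, since $c\in\bigcup_{k'}F_{d-1,k'}(M)$, we can be sharper: $c\in F_{d-1,k''}(M)$ for some $k''$, and combining with $c\in F_{d,k}(M)$, niceness in degree $d-1$ together with the equivalent reformulation $F_{d+1,k}(M)\cap F_{d,k+1}(M)\subseteq F_{d,k}(M)$ from Lemma \ref{lemme8} (and its iterates) forces $c\in F_{d-1,k}(M)$. Hence $a=b+c\in F_{d,k-1}(M)+F_{d-1,k}(M)$, as needed. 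I should double-check the bookkeeping of this descent step carefully, possibly phrasing it directly via the stepwise inclusions $F_{d'+1,k}(M)\cap F_{d',k+1}(M)\subseteq F_{d',k}(M)$ guaranteed by Lemma \ref{lemme8}, which is the cleanest route.

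The remaining isomorphism $\textrm{bigr}(M)\simeq\textrm{gr}^F(\textrm{gr}^V(M))$ follows by the identical computation with $F$ and $V$ exchanged, using that niceness is symmetric in $d$ and $k$ (clear from the defining equation) and that Lemma \ref{lemme8} provides the symmetric reformulation $F_{d+1,k}(M)\cap F_{d,k+1}(M)\subseteq F_{d,k}(M)$. I expect the main obstacle to be purely organizational: keeping the four-term intersections and unions straight, and making the descent from "$c\in F_{d-1}(M)\cap F_{d,k}(M)$" down to "$c\in F_{d-1,k}(M)$" rigorous rather than hand-waved — the niceness hypothesis as literally stated only yields membership in $F_{d,k}(M)$, so one genuinely needs the pointwise refinement that an element of $\bigcup_{k'}F_{d-1,k'}(M)$ sits in some finite level, then walks down one $d$-step at a time. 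Everything else is a routine verification that the maps are well-defined morphisms of bigraded $\textrm{bigr}(D)$-modules, which is immediate since they are all induced by the identity on $M$.
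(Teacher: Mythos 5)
Your proof is correct, and it takes a genuinely more direct route than the paper's. The paper works through a presentation $M = D^r/N$, reformulates niceness as the inclusion $(F_d(D^r)+N)\cap(V_k(D^r)+N)\subset F_{d,k}(D^r)+N$, and then carries out a computation on the level of $D^r$ comparing $\mathrm{gr}^F\mathrm{gr}^V(N)$ with $\mathrm{bigr}(N)$. You work intrinsically with the bifiltration on $M$, identify the $(d,k)$-component of $\mathrm{gr}^V\mathrm{gr}^F(M)$ with $(F_{d,k}(M)+F_{d-1}(M))/(F_{d,k-1}(M)+F_{d-1}(M))$, and reduce the isomorphism to the single intersection identity $F_{d,k}(M)\cap(F_{d,k-1}(M)+F_{d-1}(M))=F_{d,k-1}(M)+F_{d-1,k}(M)$. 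This avoids the passage to $D^r$ entirely; both proofs are using the same essential mechanism (niceness kills the unwanted part of the intersection), but yours exposes it more cleanly and needs no presentation.

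One simplification in the key descent step, where you worried most: your element $c$ lies in $F_{d-1}(M)$ by assumption and in $F_{d,k}(M)\subseteq V_k(M)$, so $c\in F_{d-1}(M)\cap V_k(M)=F_{d-1,k}(M)$ immediately, by applying the niceness identity with parameters $(d-1,k)$. Your initial attempt applied niceness at parameter $d$ (which indeed gives nothing new), and you then routed around the difficulty via the stepwise inclusions from Lemma \ref{lemme8}; that detour does work, but it is unnecessary. With the right index, the niceness equation as literally stated finishes the argument in one line, and no iterated descent is needed.
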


\begin{proof}
For the sake of simplicity, we suppose that $\mathbf{n}^{(0)}=\mathbf{m}^{(0)}=\mathbf{0}$ and  consider $M=D^r/N$. We have 
\[
F_{d,k}(M)=\frac{F_{d,k}(D^r)+N}{N},
\quad  F_{d}(M)=\frac{F_{d}(D^r)+N}{N},
\quad V_{k}(M)=\frac{V_{k}(D^r)+N}{N}.
\]
The niceness assumption is equivalent to the following:
\begin{equation}\label{eq4}
\forall d,k,\ (F_{d}(D^r)+N)\cap (V_{k}(D^r)+N)\subset F_{d,k}(D^r)+N.
\end{equation}
We have $\textrm{gr}^V(M)=\textrm{gr}^V(D^r)/\textrm{gr}^V(N)$ with
\[
\textrm{gr}^V(N)=\bigoplus_k \frac{V_k(D^r)\cap N+V_{k-1}(D^r)}{V_{k-1}(D^r)}.
\]
We naturally define
\begin{eqnarray*}
F_d(\textrm{gr}^V(N)) & = & F_d(\textrm{gr}^V(D^r))\cap\textrm{gr}^V(N)\\
 & = & \bigoplus_k \frac{F_{d,k}(D^r)+V_{k-1}(D^r)}{V_{k-1}(D^r)}\bigcap 
 \frac{V_k(D^r)\cap N+V_{k-1}(D^r)}{V_{k-1}(D^r)}.
\end{eqnarray*}
Thus we have 
\[
\textrm{gr}^F\textrm{gr}^V (N)=\bigoplus_{d,k}
\frac{(F_{d,k}(D^r)+V_{k-1}(D^r))\cap (V_k(D^r)\cap N+V_{k-1}(D^r))}
{(F_{d-1,k}(D^r)+V_{k-1}(D^r))\cap (V_k(D^r)\cap N+V_{k-1}(D^r))}.
\]
This is included in 
\[
\textrm{gr}^F\textrm{gr}^V(D^r)=\bigoplus_{d,k}\frac{F_{d,k}(D^r)}{F_{d-1,k}(D^r)+F_{d,k-1}(D^r)}
\]
via the map
\[
(F_{d,k}(D^r)+V_{k-1}(D^r))\cap (V_k(D^r)\cap N+V_{k-1}(D^r))\to F_{d,k}(D^r)+V_{k-1}(D^r)
\to F_{d,k}(D^r).
\]
Hence 
\begin{eqnarray*}
\textrm{gr}^F\textrm{gr}^V (M) & = &
\textrm{gr}^F\textrm{gr}^V (D)/\textrm{gr}^F\textrm{gr}^V (N)\\
 & = & 
 \frac{F_{d,k}(D^r)}{F_{d,k}(D^r)\cap(V_k(N)+V_{k-1}(D^r))+F_{d-1,k}(D^r)+F_{d,k-1}(D^r)}.
\end{eqnarray*}
On the other hand,
\begin{eqnarray*}
\textrm{bigr}_{d,k}(M) & = & \frac{F_{d,k}(M)}{F_{d-1,k}(M)+F_{d,k-1}(M)}\\
 & = &  \frac{F_{d,k}(D^r)+N}{F_{d-1,k}(D^r)+F_{d,k-1}(D^r)+N}\\
  & = & \frac{F_{d,k}(D^r)}{F_{d,k}(D^r)\cap (F_{d-1,k}(D^r)+F_{d,k-1}(D^r)+N)}\\
  & = & \frac{F_{d,k}(D^r)}{F_{d-1,k}(D^r)+F_{d,k-1}(D^r)+N\cap F_{d,k}(D^r)}.
\end{eqnarray*}
We have to show
\begin{eqnarray}
F_{d-1,k}(D^r)+F_{d,k-1}(D^r)+N\cap F_{d,k}(D^r) & = & 
F_{d,k}(D^r)\cap(V_k(N)+V_{k-1}(D^r))\nonumber\\
 & + & F_{d-1,k}(D^r)+F_{d,k-1}(D^r).\label{eq5}
\end{eqnarray}
The inclusion $\subset$ is obvious. On the other hand, 
\begin{eqnarray*}
F_{d,k}(D^r)\cap(V_k(N)+V_{k-1}(D^r)) & = & F_{d,k}(D^r)\cap(N+V_{k-1}(D^r))\\
 & \subset & (F_{d,k-1}(D^r)+N)\cap F_{d,k}(D^r)\quad(\textrm{using}\,(\ref{eq4}))\\
  & \subset &  F_{d,k-1}(D^r)+N\cap F_{d,k}(D^r),
\end{eqnarray*}
which proves (\ref{eq5}).

We have showed that 
$\textrm{bigr}(M)\simeq\textrm{gr}^{F}(\textrm{gr}^{V}(M))$, and by exchanging the role of $F$ and $V$ we show that $\textrm{bigr}(M)\simeq\textrm{gr}^{V}(\textrm{gr}^{F}(M))$.

Note also that under the niceness assumption, the module $\textrm{bigr}(N)$ is identified with a submodule of $\textrm{bigr}(D^r)$ such that 
$\textrm{bigr}(M)\simeq\textrm{bigr}(D^r)/\textrm{bigr}(N)$.
\end{proof}

\begin{lemme}[\cite{SST}, Lemma 2.1.6]\label{lemme4}
For $\epsilon>0$ small enough, 
\[
\mathrm{gr}^{V}(\mathrm{gr}^{F}(M))\simeq \mathrm{gr}^{L}(M)\quad \textrm{with}\quad L=F+\epsilon V,
\]
 and 
\[
\mathrm{gr}^{F}(\mathrm{gr}^{V}(M))\simeq \mathrm{gr}^{L}(M)\quad \textrm{with} \quad L=V+\epsilon F.
\]
\end{lemme}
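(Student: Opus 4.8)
The plan is to reduce the statement to an equality of initial (symbol) submodules with respect to weight vectors and then invoke the finiteness of slopes. First I would write $M\simeq D^{r_0}[\mathbf{n}^{(0)}][\mathbf{m}^{(0)}]/N$ coming from a bifiltered presentation; since the shifts $\mathbf{n}^{(0)},\mathbf{m}^{(0)}$ only translate the gradings and so do not affect the isomorphism type of any associated graded, it is harmless to assume $\mathbf{n}^{(0)}=\mathbf{m}^{(0)}=0$, i.e.\ $M=D^r/N$ with the bifiltration induced from that of $D^r$. The filtration $L=F+\epsilon V$ is the weight filtration attached to the admissible weight vector assigning $0$ to the $x_i$, $1$ to the $\partial_{x_i}$, $-\epsilon$ to the $t_i$ and $1+\epsilon$ to the $\partial_{t_i}$; each conjugate pair still has total weight $1>0$, so $[\partial,x]=1$ has strictly smaller $L$-weight than $x\partial$, $\textrm{gr}^{L}(D)$ is the commutative polynomial ring $\C[x,t,\xi_x,\xi_t]$, and $\textrm{gr}^{L}(M)=\textrm{gr}^{L}(D^r)/\textrm{in}^{L}(N)$ where $\textrm{in}^{L}(N)$ is the module of $L$-principal symbols. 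Exactly the same description holds for the iterated graded: $\textrm{gr}^{V}(\textrm{gr}^{F}(D^r))$ is the \emph{same} polynomial ring with the same generators, and $\textrm{gr}^{V}(\textrm{gr}^{F}(M))=\textrm{gr}^{V}(\textrm{gr}^{F}(D^r))/\textrm{in}^{V}(\textrm{in}^{F}(N))$. Thus the lemma amounts to the equality of the two submodules $\textrm{in}^{L}(N)$ and $\textrm{in}^{V}(\textrm{in}^{F}(N))$ inside this fixed polynomial ring, for $\epsilon$ small enough.

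Next I would fix a finite generating system $g_1,\dots,g_s$ of $N$ and a well-ordering $\prec$ on the monomials of $D^r$ refining the iterated weight, namely: compare by $F$-weight, then by $V$-weight, then by a fixed admissible order; this is precisely the order computing $\textrm{in}^{V}(\textrm{in}^{F}(\cdot))$ in one step. For a real $\epsilon>0$ let $\prec_\epsilon$ be the order ``compare by $L=F+\epsilon V$ weight, tie-break by $\prec$''. Running Buchberger's algorithm for $\prec_\epsilon$ from $g_1,\dots,g_s$ terminates after finitely many steps, and in its course only finitely many pairs of monomials are ever compared; the difference of the $L$-weights of two monomials $m,m'$ is $(F(m)-F(m'))+\epsilon(V(m)-V(m'))$, an affine function of $\epsilon$, whose sign therefore changes at most at one (rational) value of $\epsilon$. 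Let $\epsilon_0>0$ be smaller than all these finitely many critical values. Then for $0<\epsilon<\epsilon_0$ the algorithm performs the same operations independently of $\epsilon$, outputs a Gr\"obner basis $G$ whose leading-exponent set is constant in $\epsilon$, and this set is exactly the one obtained with the refinement $\prec$, i.e.\ the leading exponents of a Gr\"obner basis of $N$ adapted to the iterated filtration. Hence $\textrm{in}^{L}(N)$ and $\textrm{in}^{V}(\textrm{in}^{F}(N))$ are generated by the same monomials and so coincide, giving $\textrm{gr}^{L}(M)\simeq\textrm{gr}^{V}(\textrm{gr}^{F}(M))$ for $0<\epsilon<\epsilon_0$. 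This ``finitely many slopes'' phenomenon is the content of \cite{SST}, Lemma 2.1.6, and is also implicit in Laurent's theory \cite{laurent04}; the one genuine subtlety is checking that the canonical identifications of the two polynomial rings and of the two symbol maps are compatible, which is a direct monomial-by-monomial verification.

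Finally, for the second isomorphism $\textrm{gr}^{F}(\textrm{gr}^{V}(M))\simeq\textrm{gr}^{L}(M)$ with $L=V+\epsilon F$ I would run the identical argument with the roles of $F$ and $V$ interchanged. The only extra care needed is that $V$ is not bounded below and $\textrm{gr}^{V}(D)$ carries negative degrees, so a plain Gr\"obner basis over $D$ with a $V$-weight order is not directly available; to remedy this I would carry out the computation over the Rees algebra $\mathcal{R}_V(D)$, equivalently after $V$-homogenization, where the weight becomes nonnegative and the finite Gr\"obner theory above applies verbatim, and then dehomogenize through $\rho_V$ to recover the statement for $M$. The main obstacle is precisely the stabilization claim --- that only finitely many values of $\epsilon$ are exceptional and that below them the $L$-filtration coincides with the iterated one; the identification of the two polynomial rings and the symmetry argument are routine once that is secured.
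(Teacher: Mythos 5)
The paper does not give its own proof of this lemma: it is stated with the citation [SST], Lemma~2.1.6 and nothing more. Your proof supplies the standard Gr\"obner-deformation argument that underlies that reference, so at the level of strategy you are on the right track: reduce the isomorphism of associated graded modules to an equality of initial submodules $\textrm{in}^{V}(\textrm{in}^{F}(N))=\textrm{in}^{F+\epsilon V}(N)$, then establish stabilization for small $\epsilon$ via finiteness of Gr\"obner computations, and handle the non-positive $V$-weight by passing to the Rees ring $\mathcal{R}_V(D)$ (equivalently, $V$-homogenizing), as [SST] does.

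One step in your stabilization argument is stated in a way that, read literally, is circular. You say: ``Running Buchberger's algorithm for $\prec_\epsilon$ \dots only finitely many pairs of monomials are ever compared \dots Let $\epsilon_0>0$ be smaller than all these finitely many critical values.'' But which monomials get compared during the run \emph{a priori} depends on $\epsilon$, so the finite set of critical values you extract is not yet uniform in $\epsilon$; you cannot define $\epsilon_0$ before knowing the run is $\epsilon$-independent, which is what $\epsilon_0$ was supposed to ensure. The standard fix is to run the algorithm once, for the iterated (i.e.\ limit, $\epsilon\to 0^+$) order $\prec$, collect the finitely many monomials occurring in the resulting Gr\"obner basis and in the reductions of all its $S$-pairs, and define $\epsilon_0$ from that fixed finite set; then for $0<\epsilon<\epsilon_0$ the orders $\prec$ and $\prec_\epsilon$ agree on all comparisons actually needed, so the same basis is a Gr\"obner basis for $\prec_\epsilon$ and the initial modules coincide. (Alternatively, invoke directly the finiteness of the Gr\"obner fan.) With that repair your argument is sound and matches the route of the cited lemma; the rest of your sketch, including the reduction to $\textrm{in}$-modules in a fixed polynomial ring and the care with the unbounded-below $V$-weight, is correct.
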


It is known that for any $L$, $\textrm{gr}^{L}(M)$ defines an algebraic cycle 
independent of the good filtration (the proof is almost the same as for the $F$-filtration). The variety defined by the annihilator of $\textrm{gr}^L(M)$ is denoted by $\textrm{char}_L(M)$. Remember that $\mathbb{K}$ denotes the fraction field of $\C[x]$. The module $\mathbb{K}\otimes\textrm{gr}^{L}(M)$ also defines an algebraic cycle independent of the good filtration.

\begin{prop}\label{prop14}
If $M$ is nicely bifiltered, we have 
\[
 K_{F,V}(M;T_{1},T_{2})=K(\mathrm{bigr}(M);T_{1},T_{2})=K(\mathrm{gr}^{L}M;T_{1},T_{2})
\]
with $L=V+\epsilon F$ or $L=F+\epsilon V$ with $\epsilon>0$ small enough. Here $\mathrm{gr}^{L}M$ is considered as a bigraded module.
\end{prop}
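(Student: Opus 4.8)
The plan is to reduce everything to the commutative invariance statement of Proposition \ref{prop1} together with the structural results already established. First I would observe that, by Proposition \ref{prop7}, the $K$-polynomial $K_{F,V}(M;T_1,T_2)$ may be computed from any bifiltered free resolution of $M$, and grading such a resolution gives a bigraded free resolution of $\textrm{gr}^F(\mathcal{R}_V(M))$ over the commutative bigraded ring $\textrm{gr}^F(\mathcal{R}_V(D))$; hence $K_{F,V}(M;T_1,T_2)=K(\textrm{gr}^F(\mathcal{R}_V(M));T_1,T_2)$ with the bigrading recorded in Section 3. So the whole proposition comes down to identifying $\textrm{gr}^F(\mathcal{R}_V(M))$, as a bigraded module over that polynomial ring, with $\textrm{bigr}(M)$ and with $\textrm{gr}^L(M)$, and then checking that the three $K$-polynomials agree.

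The first equality $K_{F,V}(M;T_1,T_2)=K(\textrm{bigr}(M);T_1,T_2)$ I would get from the niceness hypothesis: by Lemma \ref{lemme8}, niceness is equivalent to $\textrm{gr}^F(\mathcal{R}_V(M))$ being $\theta$-saturated. Now $\textrm{gr}^F(\mathcal{R}_V(M))=\bigoplus_{d,k}\big(F_{d,k}(M)/F_{d-1,k}(M)\big)\theta^k$, and quotienting out the action of $\theta$ (equivalently, taking $\textrm{gr}^V$ of this module, since $\theta$ plays the role of the $V$-Rees variable) yields precisely $\textrm{bigr}(M)=\bigoplus_{d,k}F_{d,k}(M)/(F_{d-1,k}(M)+F_{d,k-1}(M))$. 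Because the module is $\theta$-saturated, $\theta$ is a nonzerodivisor, and a free resolution of $\textrm{gr}^F(\mathcal{R}_V(M))$ over $\textrm{gr}^F(\mathcal{R}_V(D))$ descends, after reducing mod $\theta$, to a free resolution of $\textrm{bigr}(M)$ over $\textrm{bigr}(D)\simeq\textrm{gr}^F(\mathcal{R}_V(D))/(\theta)$ — the point being that the shifts $(\mathbf{n}^{(i)},\mathbf{m}^{(i)})$ are literally unchanged, since reducing mod $\theta$ does not move degrees in the bigrading where $\textrm{deg}(\theta)=(0,1)$ but the $K$-polynomial only records the shift data. Hence the two $K$-polynomials coincide. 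Concretely I would say: niceness gives, via Lemma \ref{lemme3}, $\textrm{bigr}(M)\simeq\textrm{gr}^F(\textrm{gr}^V(M))\simeq\textrm{gr}^V(\textrm{gr}^F(M))$, and the free resolution of $\textrm{gr}^F(\mathcal{R}_V(M))$ obtained in the proof of Proposition \ref{prop12} (a lift of a bigraded resolution of $\textrm{gr}^F\mathcal{R}_V(M)$ itself) is $\theta$-homogeneous, so reducing mod $\theta$ preserves all Betti numbers and shifts.

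For the second equality, $K(\textrm{bigr}(M);T_1,T_2)=K(\textrm{gr}^L(M);T_1,T_2)$, I would invoke Lemma \ref{lemme4}: for $\epsilon>0$ small enough, $\textrm{gr}^V(\textrm{gr}^F(M))\simeq\textrm{gr}^L(M)$ with $L=F+\epsilon V$, and $\textrm{gr}^F(\textrm{gr}^V(M))\simeq\textrm{gr}^L(M)$ with $L=V+\epsilon F$. Combined with Lemma \ref{lemme3}, both of these iterated graded modules are isomorphic, as bigraded modules over $\textrm{bigr}(D)$, to $\textrm{bigr}(M)$, so their $K$-polynomials agree and equal $K(\textrm{bigr}(M);T_1,T_2)$. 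The only subtlety is bookkeeping: $\textrm{gr}^L(M)$ a priori carries a single $L$-grading, and one must view it as bigraded by the pair $(d,k)$ coming from the bifiltration — but that is exactly the content of the isomorphism in Lemma \ref{lemme4}, where the right-hand side is understood with this bigrading (this is why the statement ends with "Here $\textrm{gr}^L M$ is considered as a bigraded module").

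The main obstacle I expect is the first equality: one must be careful that passing from $\textrm{gr}^F(\mathcal{R}_V(M))$ to $\textrm{bigr}(M)$ by killing $\theta$ genuinely does not change the $K$-polynomial. Without niceness, $\theta$ could be a zerodivisor and the long exact sequence coming from $0\to \textrm{gr}^F\mathcal{R}_V(M)\xrightarrow{\theta}\textrm{gr}^F\mathcal{R}_V(M)\to \textrm{bigr}(M)\to 0$ would fail to be short, so this is precisely where the hypothesis is used. I would spell out that under niceness the short exact sequence $0\to \textrm{gr}^F\mathcal{R}_V(M)\xrightarrow{\theta}\textrm{gr}^F\mathcal{R}_V(M)\to \textrm{bigr}(M)\to 0$ holds (this is the $\theta$-saturatedness of Lemma \ref{lemme8} part 3 together with the identification of the cokernel), take $K$-polynomials, note $K$ of the middle term equals $T_2\cdot K$ of the left term minus nothing — actually the cleanest route is: tensor a free resolution $\mathcal{L}_\bullet\to \textrm{gr}^F\mathcal{R}_V(M)$ with $\textrm{gr}^F\mathcal{R}_V(D)/(\theta)$; since $\theta$ is a nonzerodivisor on the module, the Koszul complex on $\theta$ has no higher homology, so $\mathcal{L}_\bullet\otimes(\cdot/(\theta))$ is still exact and resolves $\textrm{bigr}(M)$; the shifts are unchanged hence so is the $K$-polynomial. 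Everything else is routine application of Lemmas \ref{lemme3} and \ref{lemme4} and Proposition \ref{prop7}.
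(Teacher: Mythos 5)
Your proof is correct and follows essentially the same route as the paper: identify $K_{F,V}(M)$ with $K(\mathrm{gr}^F\mathcal{R}_V(M))$, use niceness to pass to $\mathrm{bigr}(M)$, then appeal to Lemmas \ref{lemme3} and \ref{lemme4}. The only difference is that where the paper cites \cite{moi}, Theorem 1.1 for the fact that a bifiltered free resolution of $M$ induces a bigraded free resolution of $\mathrm{bigr}(M)$, you supply a direct proof via $\theta$-saturatedness from Lemma \ref{lemme8} and the Tor-vanishing/Koszul argument (a nonzerodivisor $\theta$ of bidegree $(0,1)$ lets one reduce the resolution modulo $\theta$ without disturbing shifts), which is a valid and self-contained way to fill the same step.
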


\begin{proof}
Under this assumption, any bifiltered free resolution of $M$ induces a bigraded free resolution of $\textrm{bigr}M$ (see \cite{moi}, Theorem 1.1, forgetting the minimality). Thus $K_{F,V}(M;T_{1},T_{2})=K(\textrm{bigr}M;T_{1},T_{2})$. But by Lemma \ref{lemme3} and Lemma \ref{lemme4}, $\textrm{bigr}M\simeq \textrm{gr}^{L}(M)$. 
\end{proof}

\paragraph{Remark 4.1.}
 The multidegree $\mathcal{C}_{F,V}(M;T_{1},T_{2})$ has total degree 
\[
d=\textrm{codim}\,\mathbb{K}\otimes\textrm{gr}^F(\mathcal{R}_V(M)),
\]
by definition. On the other hand, since the multigrading on $\mathbb{K}\otimes\textrm{bigr}\,D$ is positive, we know that the first non-zero terms in the expansion of $K_{F,V}(M;1-T_{1},1-T_{2})$ have total degree equal to 
\[
d'=\textrm{codim}\,(\mathbb{K}\otimes\textrm{bigr}\,M).
\]
Thus $d\leq d'$. If $d<d'$, then $\mathcal{C}_{F,V}(M;T_{1},T_{2})=0$.
We will see in the next section non trivial cases in which $d=d'$.

We then have, applying Proposition \ref{prop2} : 

\begin{theo}
The multidegree $\mathcal{C}_{F,V}(M;T_{1},T_{2})$ only depends on 
$\mathrm{codim}\,\mathbb{K}\otimes\mathrm{gr}^F(\mathcal{R}_V(M))$ and on the algebraic cycle defined by $\mathbb{K}\otimes\mathrm{gr}^{L}(M)$ with $L=V+\epsilon F$ or $L=F+\epsilon V$ with $\epsilon>0$ small enough.
\end{theo}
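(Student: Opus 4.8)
The plan is to combine Proposition \ref{prop14}, the definition of the multidegree, and Proposition \ref{prop2} in a straightforward way. First I would recall that, by definition, $\mathcal{C}_{F,V}(M;T_1,T_2)$ is obtained from $K_{F,V}(M;1-T_1,1-T_2)$ by extracting the homogeneous part of total degree $d=\mathrm{codim}\,\mathbb{K}\otimes\mathrm{gr}^F(\mathcal{R}_V(M))$. By Proposition \ref{prop14} (which applies because $M$ is nicely bifiltered), we have $K_{F,V}(M;T_1,T_2)=K(\mathbb{K}\otimes\mathrm{gr}^L(M);T_1,T_2)$ for $L=V+\epsilon F$ or $L=F+\epsilon V$ with $\epsilon>0$ small; hence $\mathcal{C}_{F,V}(M;T_1,T_2)$ is the degree-$d$ part of $K(\mathbb{K}\otimes\mathrm{gr}^L(M);1-T_1,1-T_2)$, computed over the positively multigraded ring $\mathbb{K}\otimes\mathrm{bigr}\,D$.

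Next I would invoke Proposition \ref{prop2}: applied to the module $\mathbb{K}\otimes\mathrm{gr}^L(M)$ over $A'=\mathbb{K}\otimes\mathrm{bigr}\,D$, it expresses the multidegree $\mathcal{C}(\mathbb{K}\otimes\mathrm{gr}^L(M);T_1,T_2)$ (the degree-$d'$ part, where $d'=\mathrm{codim}\,\mathbb{K}\otimes\mathrm{bigr}\,M$) as $\sum_i(\mathrm{mult}_{\mathfrak{p}_i}\,\mathbb{K}\otimes\mathrm{gr}^L(M))\,\mathcal{C}(A'/\mathfrak{p}_i;T_1,T_2)$, the sum over maximal-dimensional associated primes. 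This shows that the top-degree part of $K(\mathbb{K}\otimes\mathrm{gr}^L(M);1-T_1,1-T_2)$ depends only on the algebraic cycle of $\mathbb{K}\otimes\mathrm{gr}^L(M)$. The remaining point is the discrepancy discussed in Remark 4.1: the multidegree $\mathcal{C}_{F,V}$ picks out the part of degree $d$, while Proposition \ref{prop2} controls the part of degree $d'\geq d$. When $d<d'$, Remark 4.1 already tells us $\mathcal{C}_{F,V}(M;T_1,T_2)=0$, which trivially depends only on $d$ and the cycle; when $d=d'$, the degree-$d$ part coincides with the genuine multidegree $\mathcal{C}(\mathbb{K}\otimes\mathrm{gr}^L(M);T_1,T_2)$, and Proposition \ref{prop2} applies verbatim.

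So the proof I would write is essentially: combine these observations. The main obstacle — really the only subtlety — is bookkeeping the two possible codimensions $d$ and $d'$ and making sure the statement "only depends on $d$ and the cycle" is correctly worded to cover both cases simultaneously; this is exactly why the theorem is phrased in terms of \emph{both} $\mathrm{codim}\,\mathbb{K}\otimes\mathrm{gr}^F(\mathcal{R}_V(M))$ and the cycle of $\mathbb{K}\otimes\mathrm{gr}^L(M)$. One should also note in passing that the algebraic cycle of $\mathbb{K}\otimes\mathrm{gr}^L(M)$ is itself independent of the good bifiltration, which has been recorded just before the statement, so the theorem genuinely produces a well-defined invariant. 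No deep new argument is needed beyond quoting Proposition \ref{prop14}, Proposition \ref{prop2}, and Remark 4.1.
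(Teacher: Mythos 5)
Your argument is correct and coincides with the paper's intended proof, which is compressed into the single phrase ``applying Proposition~\ref{prop2}'' preceding the theorem: Proposition~\ref{prop14} replaces $K_{F,V}(M)$ by $K(\mathbb{K}\otimes\mathrm{gr}^L(M))$ over the positively multigraded ring $\mathbb{K}\otimes\mathrm{bigr}\,D$, and the split via Remark~4.1 (vanishing when $d<d'$, Proposition~\ref{prop2} verbatim when $d=d'$) is exactly the needed bookkeeping, with the independence of the cycle of $\mathbb{K}\otimes\mathrm{gr}^L(M)$ from the good bifiltration recorded just before the statement. Just note that the niceness hypothesis, though implicit in the section, must be read into the theorem since Proposition~\ref{prop14} requires it.
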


Let us recall some geometric meaning related to the $L$-filtration. 
Let $X=\C^{n+p}$, $Y=\{t=0\}\subset X$ and $\Lambda=T^*_Y X$ the conormal bundle. 
We have $\textrm{gr}^L(D)\simeq\mathcal{O}(T^*\Lambda)$, c.f. \cite{laurent04}. 
Let $\pi : T^*\Lambda\to Y$ be the canonical projection. 

By Proposition \ref{prop10}, 
$\mathcal{C}_{F,V}(\mathbb{K}\otimes\textrm{gr}^{L}(M);T_1,T_2)=\mathcal{C}_{F,V}(\textrm{gr}^{L}(M)^y;T_1,T_2)$ for $y\in Y$ generic. This depends only on the algebraic cycle on $\pi^{-1}(y)$ defined by $\textrm{gr}^{L}(M)^y$ for $y$ generic. $d'$ is equal to the codimension of $\textrm{char}_L(M)\cap \pi^{-1}(y)\subset \pi^{-1}(y)$, for $y$ generic.

For any $L$, we have $ \textrm{gr}^{L}(D)\simeq\textrm{gr}^{F}(\textrm{gr}^{V}(D))$ thus $ \textrm{gr}^{L}(D)$ is a bigraded ring. Following the theory of Y. Laurent, we say that $M$ has no slopes along $Y$ if for any $L$, the ideal $\textrm{rad}(\textrm{ann}\,\textrm{gr}^{L}(M))$ (defining $\textrm{char}_L (M)$) is bihomogeneous. The following means that niceness of the bifiltration is a strong regularity condition.

\begin{prop}
If $M$ is a nicely bifiltered holonomic $D$-module, then $M$ has no slopes along $Y$.
\end{prop}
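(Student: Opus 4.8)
The plan is to show that niceness forces the characteristic cycle $\mathrm{char}_L(M)$ to be bihomogeneous for \emph{every} intermediate filtration $L=pF+qV$, not just for $L$ close to $F$ or to $V$. The starting point is Lemma \ref{lemme3}: under the niceness assumption, $\mathrm{bigr}(M)\simeq\mathrm{gr}^V(\mathrm{gr}^F(M))\simeq\mathrm{gr}^F(\mathrm{gr}^V(M))$, and combining this with Lemma \ref{lemme4} we get $\mathrm{bigr}(M)\simeq\mathrm{gr}^L(M)$ as a bigraded module when $L=F+\epsilon V$ (and likewise $L=V+\epsilon F$). The key structural input I would isolate first is that, for a nicely bifiltered module, $\mathrm{gr}^V(M)$ is a \emph{nicely $(F$-$)$filtered} object in the appropriate sense — that is, passing to $\mathrm{gr}^V$ and then taking $\mathrm{gr}^F$ loses no information, so the $F$-filtration on $\mathrm{gr}^V(M)$ is itself compatible with the bigrading. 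Concretely, $\mathrm{char}_L(M)$ is (the support of) $\mathrm{gr}^L(M)=\mathrm{bigr}(M)$, and the annihilator ideal of a bigraded module is bihomogeneous; hence $\mathrm{rad}(\mathrm{ann}\,\mathrm{gr}^L(M))$ is bihomogeneous for the two extreme choices of $L$.

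The remaining point — and this is the main obstacle — is to pass from the two extreme $L$'s to \emph{all} $L=pF+qV$. Here I would invoke the holonomicity hypothesis together with Laurent's theory of slopes. The slopes of $M$ along $Y$ form a finite set, and $M$ has no slopes precisely when $\mathrm{char}_L(M)$ is bihomogeneous for all $L$; equivalently, the algebraic cycle of $\mathrm{gr}^L(M)$ is constant (equal to the cycle of $\mathrm{bigr}(M)$) for $L$ in the whole range $p,q>0$. The standard mechanism is a semicontinuity/generic-flatness argument in the parameter $s=q/p\in(0,\infty)$: the function $L\mapsto \mathrm{char}_L(M)$ can only jump at finitely many rational values of $s$ (the slopes), and between jumps it is the ``generic'' $L$-characteristic variety, which coincides with $\mathrm{char}_F$ near $s=0$ and with $\mathrm{char}_V$ near $s=\infty$. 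For a holonomic module one knows a priori that $\dim\mathrm{gr}^L(M)=n+p$ for every $L$ (holonomicity is preserved), so Lemma \ref{lemme2}-type bookkeeping shows that the cycle cannot drop in dimension; what must be ruled out is a genuine jump at some intermediate slope. But if such a jump occurred, the cycle of $\mathrm{gr}^L(M)$ for intermediate $L$ would strictly contain extra components, contradicting that $\mathrm{bigr}(M)$ — which by Lemma \ref{lemme3} already computes $\mathrm{gr}^L(M)$ at both ends and has a well-defined $K$-polynomial independent of the resolution (Proposition \ref{prop14}) — governs the $L$-graded module throughout. I expect the cleanest route is to cite directly Laurent's characterization (c.f.\ \cite{laurent04}): $M$ has no slopes along $Y$ iff $\mathrm{gr}^V(M)$, endowed with its induced $F$-filtration, has a bihomogeneous characteristic variety, and niceness gives exactly this via Lemma \ref{lemme3}.

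So the steps, in order, are: (i) recall that $\mathrm{char}_L(M)$ is the support of $\mathrm{gr}^L(M)$ and that this cycle is independent of the good filtration; (ii) use Lemmas \ref{lemme3} and \ref{lemme4} to identify $\mathrm{gr}^L(M)\simeq\mathrm{bigr}(M)$ for $L=F+\epsilon V$ and $L=V+\epsilon F$, so that in particular $\mathrm{rad}(\mathrm{ann}\,\mathrm{gr}^L(M))=\mathrm{rad}(\mathrm{ann}\,\mathrm{bigr}(M))$ is bihomogeneous at both ends; (iii) observe that $\mathrm{bigr}(M)\simeq\mathrm{bigr}(D^r)/\mathrm{bigr}(N)$ with $\mathrm{bigr}(N)$ a \emph{bihomogeneous} submodule (the last sentence of the proof of Lemma \ref{lemme3}), so the bigraded structure is honest; (iv) invoke holonomicity to guarantee $\dim\mathrm{gr}^L(M)$ is constant in $L$, hence no slope can come from a dimension drop; (v) conclude, via Laurent's slope criterion, that the bihomogeneity at the extremes propagates to all $L$, i.e.\ $M$ has no slopes along $Y$. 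The one step needing care is (iv)--(v): making precise that ``bihomogeneous at the two extreme filtrations $+$ holonomic'' implies ``bihomogeneous for all $L$'', which is where Laurent's finiteness-of-slopes theorem does the real work.
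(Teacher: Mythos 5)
Your proposal diverges from the paper's argument in a way that leaves a genuine gap. Steps (i)--(iii) are fine but they only establish bihomogeneity of $\mathrm{char}_L(M)$ for $L=F+\epsilon V$ and $L=V+\epsilon F$, and this is essentially automatic: in Laurent's theory the $L$-characteristic variety is locally constant and bihomogeneous \emph{between} consecutive slopes, so it is always bihomogeneous sufficiently near either extreme, even when $M$ does have slopes. The ``characterization'' you propose to cite in step (v) — that absence of slopes is equivalent to bihomogeneity of the induced $F$-characteristic variety of $\mathrm{gr}^V(M)$, i.e.\ bihomogeneity at one extreme — is therefore not a usable criterion; the whole content of ``no slopes'' is that bihomogeneity holds at \emph{every} intermediate $L$, and bihomogeneity at the ends does not propagate by semicontinuity alone. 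Your step (iv) is also weaker than what is needed: knowing $\dim\mathrm{gr}^L(M)=n+p$ for all $L$ does not prevent an intermediate $\mathrm{char}_L(M)$ from being non-bihomogeneous.

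The paper closes this gap with a concrete, uniform-in-$L$ argument that your plan does not contain. Starting from a \emph{minimal} bifiltered presentation $D^s[\mathbf{n}][\mathbf{m}]\to D^r\to M\to 0$, it uses $\theta$-saturatedness (Lemma \ref{lemme8}, i.e.\ niceness) to show that for each generator $P^{(i)}$ of $N$ there is a nonempty set $J(i)$ of components $P^{(i)}_j$ simultaneously attaining the $F$-order $n_i$, the $V$-order $m_i$, and having trivial Newton polygon. This yields the crucial inclusion $\mathrm{bigr}(N)\subset\mathrm{gr}^L(N)$ for \emph{every} intermediate $L$, hence $\mathrm{char}_L(M)\subset\mathrm{supp}(\mathrm{bigr}\,M)$ for every $L$. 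Holonomicity then enters through the \emph{purity} theorem (Smith; SST Theorem 2.2.1): both $\mathrm{char}_L(M)$ and $\mathrm{supp}(\mathrm{bigr}\,M)$ are pure of dimension $n+p$, so the inclusion forces $\mathrm{char}_L(M)$ to be a union of irreducible components of the bihomogeneous set $\mathrm{supp}(\mathrm{bigr}\,M)$, hence itself bihomogeneous. If you want to rescue your plan, the missing ingredient is precisely this uniform comparison of $\mathrm{gr}^L(N)$ with $\mathrm{bigr}(N)$ across all $L$; semicontinuity and the finiteness of slopes by themselves do not supply it.
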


\begin{proof}
As before, we identify $\mathcal{R}_V(D)$ with $D[\theta]$.
Let us take a bifiltered free presentation 

\begin{equation}\label{eq2}
D^{s}[\mathbf{n}][\mathbf{m}]\stackrel{\phi_{1}}{\to} D^{r}\stackrel{\phi_{0}}\to M\to 0,
\end{equation}
with $\phi_{1}(e_{i})=P^{(i)}=\sum_{j}P_{j}^{(i)}e_{j}$, and let $N=\textrm{Im}\,\phi_{1}$. For the sake of simplicity, we have assumed $\mathbf{n}^{(0)}=\mathbf{m}^{(0)}=\mathbf{0}$.
 This induces a bigraded free resolution
\[
\textrm{gr}^{F}(D[\theta])^{s}[\mathbf{n}][\mathbf{m}]\stackrel{\overline{\phi}_{1}}{\to}
 \textrm{gr}^{F}(D[\theta])^{r}\stackrel{\overline{\phi}_{0}}\to \textrm{gr}^{F}\mathcal{R}_{V}M\to 0.
\]
Using the lifting (\cite{granger04}, Proposition 2.7), we can suppose that the presentation (\ref{eq2}) is minimal, in the sense that the elements $\overline{\phi}_1(e_i)$ form a minimal set of generators of $\textrm{Ker}\,\overline{\phi}_{0}$.

Let us introduce some notations in order to determine $\overline{\phi}_1(e_i)$.\\
%
If $P=\sum a_{\nu,\mu}(x,\partial_x)t^{\nu}\partial_t^{\mu}\in V_k(D)$, we define
\[
H_k^V(P)=\sum a_{\nu,\mu}(x,\partial_x)t^{\nu}\partial_t^{\mu}\theta^{k-(|\mu|-|\nu|)}\in D[\theta],
\]
and $H^V(P)=H^V_{\textrm{ord}^V(P)}(P)$, the $V$-homogenization of $P$.
 Similarly if $P=\sum P_j e_j\in \oplus V_{m_j}(D)$, we define 
$H^V_{\mathbf{m}}(P)=\sum H^V_{m_j}(P_j)e_j\in (D[\theta])^r$.\\
Now if $P=\sum a_{\beta}(x,t,\partial_t,\theta)\partial_x^{\beta}\in F_d(D[\theta])$, we define
\[
\sigma^F_d(P)=\sum_{|\beta|=d} a_{\beta}(x,t,\partial_t,\theta)\partial_x^{\beta}\in\textrm{gr}^F_d(D[\theta]),
\]
and $\sigma^F(P)=\sigma^F_{\textrm{ord}^F(P)}P$. Similarly if $P=\sum P_j e_j\in \oplus F_{n_j}(D[\theta])$, we define 
$\sigma^F_{\mathbf{n}}(P)=\sum \sigma^F_{n_j}(P_j)e_j\in \textrm{gr}^F (D[\theta])^r$.

We have 
\[
\overline{\phi}_1(e_i)=\sigma^F_{\mathbf{n}}(H_{\mathbf{m}}(P)).
\]

For $P=\sum_{\nu,\beta,\mu}a_{\nu,\beta,\mu}(x)t^{\nu}\partial_{x}^{\beta}\partial_{t}^{\mu}$, let us define the Newton polygon by
\[
\mathcal{P}(P)=\bigcup_{(\nu,\beta,\mu)|a_{\nu,\beta,\mu}(x)\neq 0}(|\nu|-|\mu|,|\beta|+|\mu|)-\mathbb{N}^{2}\subset \mathbb{Z}^{2}.
\]
We say that $\mathcal{P}(P)$ is \emph{trivial} if it is equal to a translate of $(-\mathbb{N})\times(-\mathbb{N})$.

\newpage
For $1\leq i\leq s$, let $J(i)$ be the set of integers $1\leq j\leq r$ such that 
\begin{itemize}
\item $\textrm{ord}^{F}P^{(i)}_{j}=n_{i},$
\item $\textrm{ord}^{V}P^{(i)}_{j}=m_{i},$ 
\item $\mathcal{P}(P^{(i)}_{j})$ is trivial.
\end{itemize}
We claim that for any $i$, the set $J(i)$ is non-empty. 
Otherwise, $\theta$ would divide $\overline{\phi}_1(e_i)$. 
By $\theta$-saturatedness, $\overline{\phi}_1(e_i)/\theta$ would belong to $\textrm{gr}^{F}\mathcal{R}_{V}N$, thus the presentation (\ref{eq2}) would not be minimal.

Then $\textrm{bigr}N$ is generated by the elements 
\[
\sum_{j\in J(i)}\sigma^{F}\sigma^{V}(P^{(i)}_{j})e_{j}
\]
for $1\leq i\leq s$. Let $L$ be an intermediate filtration. We have 
\[
\sigma^{L}(P^{(i)})=\sum_{j\in J(i)}\sigma^{L}(P^{(i)}_{j})e_{j}=\sum_{j\in J(i)}\sigma^{F}\sigma^{V}(P^{(i)}_{j})e_{j}.
\]
Thus for any $L$,

\begin{equation}\label{eq3}
\textrm{bigr} N\subset \textrm{gr}^{L} N.
\end{equation}
If $\mathcal{M}$ is a $\textrm{gr}^L(D)$-module, we denote by $\textrm{supp} \mathcal{M}$ the zero-set of the annihilator of $\mathcal{M}$. 
By \cite{smith}, Theorem 1.1 and \cite{SST}, Theorem 2.2.1 (valid for any $L$), $\textrm{char}_{L}(M)=\textrm{supp} (\textrm{gr}^{L}(M))$ is pure of dimension $n+p$ for any $L$. 
Since $\textrm{bigr} N=\textrm{gr}^{F}\textrm{gr}^{V}(N)=\textrm{gr}^{L}(N)$ for $L$ close to $V$, 
then $\textrm{supp} (\textrm{bigr} M)$ is pure of dimension $n+p$. 

By (\ref{eq3}), we have for any $L$, 
$\textrm{char}_{L}M\subset \textrm{supp} (\textrm{bigr} M)$, 
thus $\textrm{char}_{L}M$ is the union of some irreducible components of $\textrm{supp} (\textrm{bigr} M)$. 
The irreducible components are bihomogeneous (a bihomogeneous module admits a bihomogeneous primary decomposition), so $\textrm{char}_{L}M$ is bihomogeneous. 
\end{proof}

\section{Examples from the theory of hypergeometric systems }

Let $D=\C[x_1,\dots,x_n]\langle \partial_1,\dots,\partial_n\rangle$.
We consider the $A$-hypergeometric $D$-module $M_A(\beta)=D/H_A(\beta)$. This is a holonomic system associated with a $d\times n$ integer matrix $A$ and $\beta_1,\dots,\beta_d\in\C$ as follows. We suppose that the abelian group generated by the columns $a_1,\dots,a_n$ of $A$ is equal to $\mathbb{Z}^d$. Let $I_A$ be the ideal of $\C[\partial_1,\dots,\partial_n]$ generated by the elements $\partial^u-\partial^v$ with $u,v\in\mathbb{N}^n$ such that $A.u=A.v$. 
The hypergeometric ideal $H_A(\beta)$ is the ideal of $D$ generated by $I_A$ and the elements $\sum_j a_{i,j}x_j\partial_j-\beta_i$ for $i=1,\dots,d$. The hypergeometric modules were introduced by I. M. Gelfand, M. M. Kapranov and A. V. Zelevinsky in \cite{GKZ}; their holonomicity (in the general case) was proved by A. Adolphson in \cite{ado}.

We endow $M$ with the quotient $F$-filtration and the quotient $V$-filtration with respect to  $x_1=\dots=x_n=0$. 

Let us assume that the abelian group generated by the rows of $A$ contains a vector $\mathbf{w}=(w_1,\dots,w_n)\in \mathbb{Z}^{n}_{>0}$. That is equivalent to the fact that the semigroup generated by the columns of $A$ is pointed. 
By applying the weight vector $W=(-\mathbf{w},\mathbf{w})$ to 
$(x,\partial)$, we get a grading on $D$. The hypergeometric module $M_A(\beta)$ is homogeneous w.r.t. to $W$.

Our first topic is to strenghten the correspondence between $\mathcal{C}_{F,V}(M_A(\beta);T_1,T_2)$ and $\mathcal{C}(\textrm{bigr}M_A(\beta);T_1,T_2)$, i.e. to prove that the modules $\textrm{bigr}M_A(\beta)$ and \\
 $\textrm{gr}^F(\mathbf{R}_V(M_A(\beta)))$ have the same codimension if $M_A(\beta)$ is nicely bifiltered.

The codimension of a finite type $D$-module $M$ is by definition the codimension of $gr^F(M)$, that does not depend on the good $F$-filtration. In fact we can make the weight vector vary as well.

\begin{prop}[\cite{SST}, pp. 65-66]\label{prop13}
Let $(\mathbf{u},\mathbf{v})\in \mathbb{N}^{2n}$ be a weight vector such that for all $i$, $u_i+v_i>0$. Endow $M$ with a good $(\mathbf{u},\mathbf{v})$-filtration. 
Then $\mathrm{codim}(\mathrm{gr}^{(\mathbf{u},\mathbf{v})}(M))=\mathrm{codim}M$.
\end{prop}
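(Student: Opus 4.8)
The plan is to reduce the statement to a deformation/flatness argument in the style of the Gr\"obner degeneration used throughout \cite{SST}. First I would recall the general principle: if $(\mathbf u,\mathbf v)$ is a weight vector with $u_i+v_i>0$, then one can form the homogenization $D^{(h)}$-type Rees construction (or, equivalently, introduce a new parameter $s$ and look at the family interpolating between the trivial filtration and the $(\mathbf u,\mathbf v)$-filtration), and the associated graded $\mathrm{gr}^{(\mathbf u,\mathbf v)}(M)$ arises as the special fibre of a flat family whose generic fibre is $M$ itself (viewed with the trivial filtration, where $\mathrm{gr}$ is $M$). Flatness over the parameter line is what forces the Hilbert function — hence the dimension, hence the codimension — to be preserved under the degeneration. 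This is exactly the content cited as \cite{SST}, pp.~65--66, so the proof really consists of setting up the family and invoking flatness.

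Concretely, the key steps in order would be: (1) fix a good $(\mathbf u,\mathbf v)$-filtration on $M$ coming from a presentation $D^r[\mathbf n]/N$, and form the Rees module $\mathcal R(M)=\bigoplus_k F_k^{(\mathbf u,\mathbf v)}(M)s^k$ over the Rees algebra $\mathcal R(D)=\bigoplus_k F_k^{(\mathbf u,\mathbf v)}(D)s^k$; (2) observe that $\mathcal R(M)/s\mathcal R(M)\simeq \mathrm{gr}^{(\mathbf u,\mathbf v)}(M)$ while $\mathcal R(M)[s^{-1}]\simeq M[s,s^{-1}]$; (3) check that, because $u_i+v_i>0$, the Rees algebra is a polynomial-type (noetherian, commutative after passing to $\mathrm{gr}^F$) ring over which $\mathcal R(M)$ is $s$-torsion-free, hence flat over $\C[s]$; (4) conclude from flatness that $\dim \mathrm{gr}^{(\mathbf u,\mathbf v)}(M)+1=\dim \mathcal R(M)=\dim M+1$, so the codimensions agree once one normalizes by the ambient dimensions — here one must be a little careful that the ambient ring $\mathrm{gr}^F(\mathcal R(D))$ after the degeneration has the same Krull dimension as $\mathrm{gr}^F(D)$ plus one, which again uses $u_i+v_i>0$ so that no variable degenerates to a non-zero-divisor situation that would drop dimension.

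The genuine subtlety, and where I expect the main obstacle, is step (3): one must verify that $\mathcal R(M)$ is flat over $\C[s]$, equivalently $s$-torsion-free, which is automatic for the Rees module of a filtration but needs the filtration to be \emph{good} (finitely generated Rees module) — and, more importantly, one needs that passing from $\mathcal R(D)$ to its special fibre does not collapse the ring, i.e. that $\mathrm{gr}^{(\mathbf u,\mathbf v)}(D)$ is again a polynomial ring in $2n$ variables. This is where the hypothesis $u_i+v_i>0$ (strict, not just $\geq 0$) is essential: it guarantees that in $\mathrm{gr}^{(\mathbf u,\mathbf v)}(D)$ the commutator $[\partial_i,x_i]=1$ becomes $0$ because $1$ has order $0<u_i+v_i=\mathrm{ord}(x_i\partial_i)$, so the associated graded is the \emph{commutative} polynomial ring $\C[x,\xi]$ with the given weights — a domain of dimension $2n$. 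Once this is in place, a standard semicontinuity/flatness argument (or direct comparison of Hilbert polynomials, since both $M$ with the trivial filtration and $\mathrm{gr}^{(\mathbf u,\mathbf v)}(M)$ have the same Hilbert polynomial as modules over appropriately filtered rings) gives the equality of dimensions. I would then remark that applying this with $(\mathbf u,\mathbf v)=(\mathbf 0,\mathbf 1)$ recovers the classical codimension, so all these codimensions coincide.
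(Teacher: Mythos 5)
The paper does not prove this proposition; it cites it to \cite{SST}, pp.~65--66, so there is no internal argument to compare against. Your sketch is in the right spirit (a one-parameter Gr\"obner/Rees degeneration is indeed how one ordinarily approaches such a statement, and you correctly identify where the strict positivity $u_i+v_i>0$ enters: to kill the commutator in the special fibre and get a commutative polynomial ring), but as written there are two genuine gaps in the dimension bookkeeping.

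First, the opening appeal to preservation of the \emph{Hilbert function} under the degeneration does not literally make sense here: because the components of $(\mathbf u,\mathbf v)$ are allowed to be zero (e.g.\ the order filtration itself has $\mathbf u=\mathbf 0$), the filtration steps $F_k^{(\mathbf u,\mathbf v)}(M)$ are typically infinite-dimensional over $\C$, so there is no naive Hilbert function to preserve. The paper's own machinery deals with exactly this issue by passing to $\mathbb K\otimes -$ (with $\mathbb K=\mathrm{Frac}(\C[x])$), where the graded pieces become finite; you would need that, or a comparable device, to turn the heuristic into a proof.

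Second, the equality $\dim\mathcal R(M)=\dim\mathrm{gr}^{(\mathbf u,\mathbf v)}(M)+1=\dim M+1$ in step (4) is where the real work is, and your sketch elides it. ``Dimension'' of a module over the noncommutative Rees algebra $\mathcal R(D)$ has to be computed through some auxiliary filtration $W$ on $\mathcal R(D)$ making $\mathrm{gr}^W\mathcal R(D)$ commutative, and then the two equalities you want are statements about $\mathrm{gr}^W\mathcal R(M)$. The equality $\dim\mathcal R(M)[s^{-1}]=\dim M+1$ goes through provided $W$ is chosen to localize to the $F$-filtration on $D[s,s^{-1}]$. But the step $\dim\mathcal R(M)/s\mathcal R(M)=\dim\mathcal R(M)-1$ relies on $s$ remaining a non-zerodivisor \emph{after} passing to $\mathrm{gr}^W$, and $s$-torsion-freeness of $\mathcal R(M)$ does not automatically persist in the associated graded: this is a strictness condition, and it is precisely the sort of phenomenon the paper has to control elsewhere (the ``niceness'' hypothesis). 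Without verifying it here, the claimed chain of equalities is not established. So the proposal is a plausible strategy, but the crux — compatible filtrations on the Rees algebra and the absence of extra $s$-torsion in $\mathrm{gr}^W\mathcal R(M)$ — is exactly the part left unaddressed.
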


We have an analogous statement for $D^{(h)}$-modules, proved in the same way.
Let $(\mathbf{u},\mathbf{v},t)\in\mathbb{N}^{2n+1}$ such that for all $i$, $u_i+v_i>t$. Then $\textrm{gr}^{(\mathbf{u},\mathbf{v},t)}(D^{(h)})$ is commutative.

\begin{definition}
Let $M$ be a graded $D^{(h)}$-module of finite type. Endow $M$ with a good $(\mathbf{u},\mathbf{v},t)$-filtration. 
We define $\mathrm{codim} M=\mathrm{codim}(\mathrm{gr}^{(\mathbf{u},\mathbf{v},t)}M)$. This depends neither on the good filtration nor on the weight vector $(\mathbf{u},\mathbf{v},t)$.
\end{definition}

Finally, since $\textrm{gr}^V(D^{(h)})\simeq D^{(h)}$, we define in the same way the codimension of a $\textrm{gr}^V(D^{(h)})$-module of finite type.

We adopt the following notation. If $P=\sum a_{\beta}(x)\partial_x^{\beta}\in F_d(D)$, we define
$H_d(P)=\sum a_{\beta}(x)\partial_x^{\beta}h^{d-|\beta|}\in D^{(h)}$,
and the $F$-homogenization $H(P)=H_{\textrm{ord}^F(P)}(P)$. If $I$ is an ideal of $D$, let $H(I)$ be the ideal of $D^{(h)}$ generated by the elements $H(P)$ such that $P\in I$.
We have $\mathcal{R}_F(M)=D^{(h)}/H(I)$. 
Similarly we define the $V$-homogenization, denoted by $H^V(P)\in D[\theta]$ and $H^V(I)\subset D[\theta]$.

\begin{prop}\label{prop15}
Let $M=D/I$ be a $W$-homogeneous nicely bifiltered $D$-module.
Then the modules $M$, $\mathrm{gr}^F(\mathbf{R}_V(M))$, $\mathrm{gr}^V(\mathbf{R}_F(M))$ and $\mathrm{bigr} M$ all have the same codimension.
\end{prop}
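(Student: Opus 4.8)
The plan is to exploit the $W$-homogeneity to identify $\mathrm{bigr}\,M$ with $\mathrm{gr}^F(\mathbf{R}_V(M))$ up to a Rees-type variable, rather than merely having an inclusion of annihilators as in Remark~4.1. First I would set up the comparison for $M$ itself: since $M$ is $W$-homogeneous with $W=(-\mathbf{w},\mathbf{w})$ and $\mathbf{w}\in\mathbb{Z}^n_{>0}$, the ideal $I$ is generated by $W$-homogeneous elements. The $V$-filtration here is the one along $x=0$, which (by the conventions fixed at the start of Section~5) assigns weights $(-\mathbf{1},\mathbf{0})$ to $(x,\partial)$ up to sign; the key point is that $W$-homogeneity forces $\mathrm{ord}^V$ and $\mathrm{ord}^F$ of a $W$-homogeneous operator to be linked, so that the $V$-homogenization $H^V(P)$ and the symbol computations needed to produce generators of $\mathrm{gr}^F(\mathbf{R}_V(N))$ coincide with those producing $\mathrm{bigr}(N)$. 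Concretely I would argue that for a $W$-homogeneous generator $P$, the Newton polygon $\mathcal{P}(P)$ from Section~4 is trivial, so the set $J(i)$ in the proof of the no-slopes proposition is everything; by the niceness hypothesis and Lemma~\ref{lemme3} we then have $\mathrm{bigr}\,M\simeq\mathrm{gr}^F(\mathrm{gr}^V(M))\simeq\mathrm{gr}^V(\mathrm{gr}^F(M))$ already, which handles the equality of the codimensions of $\mathrm{bigr}\,M$, $\mathrm{gr}^F(\mathbf{R}_V(M))$ modulo the $\theta$-variable, and $\mathrm{gr}^V(\mathbf{R}_F(M))$ modulo the $h$-variable.

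Next I would reduce everything to the codimension of $M$ via Proposition~\ref{prop13} and its $D^{(h)}$- and $\mathrm{gr}^V(D^{(h)})$-analogues stated just above. The chain of reasoning is: $\mathrm{codim}\,M=\mathrm{codim}\,\mathrm{gr}^F(M)$ by definition; applying the weight-vector-independence (Proposition~\ref{prop13}) I can replace the $F$-filtration by an intermediate $L$-filtration with positive weights to get $\mathrm{codim}\,\mathrm{gr}^F(M)=\mathrm{codim}\,\mathrm{gr}^L(M)$ for $L=F+\epsilon V$, and by Lemma~\ref{lemme4} together with Lemma~\ref{lemme3} this equals $\mathrm{codim}\,\mathrm{bigr}\,M$. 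For $\mathrm{gr}^F(\mathbf{R}_V(M))$: this is a $\mathrm{gr}^F(\mathcal{R}_V(D))$-module, and $\mathrm{gr}^F(\mathcal{R}_V(D))\simeq\mathrm{gr}^V(D^{(h)})\simeq D^{(h)}$; the relation $\mathrm{gr}^F(\mathbf{R}_V(M))/\theta\cdot\mathrm{gr}^F(\mathbf{R}_V(M))\simeq\mathrm{bigr}\,M$ (from the niceness, as in the proof of Theorem~\ref{theo1}) together with $\theta$ being a nonzerodivisor (by $\theta$-saturatedness, Lemma~\ref{lemme8}) shows that $\theta$ is a regular parameter, so $\mathrm{codim}\,\mathrm{gr}^F(\mathbf{R}_V(M))=\mathrm{codim}\,\mathrm{bigr}\,M$. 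The argument for $\mathrm{gr}^V(\mathbf{R}_F(M))$ is symmetric, using $h$-saturatedness and $\mathrm{gr}^V(\mathbf{R}_F(M))/h\simeq\mathrm{bigr}\,M$.

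The main obstacle is the reduction-modulo-$\theta$ step: I need that $\theta$ is a nonzerodivisor on $\mathrm{gr}^F(\mathbf{R}_V(M))$ \emph{and} that cutting by it drops the dimension by exactly one, i.e. that $\theta$ is part of a system of parameters on every maximal-dimensional component. The $\theta$-saturatedness gives the nonzerodivisor property, but to conclude the codimension statement I must rule out that $\theta$ is a unit on some component or lies in no minimal prime of positive-dimensional support in a way that fails to drop dimension; here I would invoke Krull's principal ideal theorem in the form that for a nonzerodivisor $\theta$ in a Noetherian ring, $\dim(A/\theta A)=\dim A-1$ provided $\theta$ is not a unit, and check that $\theta$ is genuinely not a unit because $\mathrm{gr}^F(\mathbf{R}_V(M))$ is a nontrivial graded module with $\theta$ of nonzero degree. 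Combining these, all four codimensions equal $\mathrm{codim}\,M$, which is the claim. I would also remark that the $W$-homogeneity is precisely what guarantees niceness can be checked and that $d=d'$ in the notation of Remark~4.1, tying this back to the statement that $\mathcal{C}_{F,V}(M_A(\beta);T_1,T_2)$ is then nonzero and computed by the $L$-characteristic cycle.
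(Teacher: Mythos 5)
Your reduction-modulo-$\theta$ idea for the equality $\mathrm{codim}\,\mathrm{gr}^F(\mathcal{R}_V(M))=\mathrm{codim}\,\mathrm{bigr}\,M$ is valid and in fact more direct than what the paper does: since $\theta$ is central, $\mathrm{gr}^F(\mathcal{R}_V(M))/\theta\,\mathrm{gr}^F(\mathcal{R}_V(M))\simeq\mathrm{bigr}\,M$, $\theta$ is a nonzerodivisor by Lemma~\ref{lemme8}, and Krull's theorem gives the desired drop of dimension by exactly one, which matches the drop of one in the dimension of the ambient ring. (For the $h$-step you do need to be a bit more careful, because $\mathrm{gr}^V(D^{(h)})\simeq D^{(h)}$ is noncommutative and the codimension there is only defined after passing to a further associated graded; you would need to check that $h$ stays a nonzerodivisor after that extra $\mathrm{gr}$.)

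The real gap is in the step $\mathrm{codim}\,\mathrm{bigr}\,M=\mathrm{codim}\,M$. You write $\mathrm{bigr}\,M\simeq\mathrm{gr}^{L}(M)$ for $L=F+\epsilon V$ (correct, from Lemmas~\ref{lemme3} and \ref{lemme4}) and then invoke Proposition~\ref{prop13} to conclude $\mathrm{codim}\,\mathrm{gr}^{L}(M)=\mathrm{codim}\,M$. But Proposition~\ref{prop13} requires the weight vector to lie in $\mathbb{N}^{2n}$, and here $L=F+\epsilon V=(-\epsilon\mathbf{1},(1+\epsilon)\mathbf{1})$ has strictly negative entries. This is precisely the obstruction that Remark~4.1 is about ($d\leq d'$ in general), and it is exactly where the $W$-homogeneity hypothesis must enter. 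The paper uses it via the weight $\Lambda=V-(-\mathbf{w},\mathbf{w})+\mu(\mathbf{0},\mathbf{1})\in\mathbb{N}^{2n}$: since $\mathrm{gr}^F(M)$ is $(-\mathbf{w},\mathbf{w})$-homogeneous, one has $\mathrm{gr}^{V}\mathrm{gr}^F(M)=\mathrm{gr}^{\Lambda}\mathrm{gr}^F(M)=\mathrm{gr}^{F+\epsilon\Lambda}(M)$, and $F+\epsilon\Lambda$ does have nonnegative entries, so Proposition~\ref{prop13} applies. Without such a device your argument does not close.

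Your two attempted uses of $W$-homogeneity are both incorrect and do not substitute for this. First, $W$-homogeneity does not force the Newton polygon $\mathcal{P}(P)$ to be trivial: e.g.\ $\partial_1^{w_2}-\partial_2^{w_1}$ is $W$-homogeneous but has Newton-polygon vertices $(-w_2,w_2)$ and $(-w_1,w_1)$, neither dominating the other when $w_1\neq w_2$. (And you do not need it: niceness alone gives $\mathrm{bigr}\,M\simeq\mathrm{gr}^F\mathrm{gr}^V(M)\simeq\mathrm{gr}^V\mathrm{gr}^F(M)$ by Lemma~\ref{lemme3}.) Second, $W$-homogeneity does not guarantee niceness; niceness is a separate hypothesis of Proposition~\ref{prop15}, and indeed in the inhomogeneous case it only holds for generic $\beta$ (Theorem~\ref{theo3}), while $W$-homogeneity always holds when the semigroup is pointed.
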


\begin{proof}
First, we prove that 
\[
\textrm{codim} \mathcal{R}_F(M)=\textrm{codim} M.
\]

Let $<$ be a well-order on $\mathbb{N}^{2n}$ (the monomials of $D$) adapted to $F$, i.e. for any $\alpha,\alpha',\beta,\beta', |\beta|<|\beta'| \Rightarrow (\alpha,\beta)<(\alpha',\beta')$. We derive from it a well-order $<'$ on $\mathbb{N}^{2n+1}$ (the monomials of $(D^{(h)})$) in the following way:
\[
(\alpha,\beta,k)<'(\alpha',\beta',k') \quad\textrm{iff}\quad
\left\{
\begin{array}{l}
|\beta| +k<|\beta'|+k'\\
\textrm{or} \quad 
\left\{
\begin{array}{l}
|\beta| +k=|\beta'|+k'\\
\textrm{and}\quad (\alpha,\beta)<(\alpha',\beta'),
\end{array}
\right.
\end{array}
\right.
\]
which is adapted to the $F$-filtration.
Let $P_1,\dots,P_s$ be a Gr\"obner base of $I$ with respect to $<$. Then $H(P_1),\dots,H(P_s)$ is a Gr\"obner base of $H(I)$ with respect to $<'$ (use the Buchberger criterion).
We have $\sigma^F(H(P_i))=\sigma^F(P_i)\in \C[x,\xi]$, thus 
$\textrm{codim}(\textrm{gr}^F(\mathbf{R}_F(M)))=\textrm{codim}(\textrm{gr}^F(M))$.

Now, we prove that  
\[
\textrm{codim}(\textrm{gr}^V(\mathcal{R}_F(M)))=\textrm{codim} M.
\]
The module $\mathcal{R}_F(M)$ is bihomogeneous with respect to the weight vectors 
$(-\mathbf{w},\mathbf{w},0)$ and $(\mathbf{0},\mathbf{1},1)$.
Let $\mu=\textrm{max}(w_i-1)\in\mathbb{N}$ and
\[
\Lambda=(-\mathbf{1},\mathbf{1},0)-(-\mathbf{w},\mathbf{w},0)+\mu.(\mathbf{0},\mathbf{1},1)=
(\mathbf{w}-\mathbf{1},(1+\mu)\mathbf{1}-\mathbf{w},\mu.\mathbf{1})\in\mathbb{N}^{2n+1}.
\] 
Using the bihomogeneity, a $V$-adapted base of $H(N)$ is also adapted to $\Lambda$, so
$\textrm{gr}^{\Lambda}(\mathcal{R}_F(M))=\textrm{gr}^{V}(\mathcal{R}_F(M))$.
Then
\begin{eqnarray*}
\textrm{codim}\, \textrm{gr}^V(\mathcal{R}_F(M)) & = &
\textrm{codim}\,\textrm{gr}^{(\mathbf{0},\mathbf{1},0)}\textrm{gr}^V(\mathcal{R}_F(M))
\quad \textrm{(by definition)}\\
 & = & \textrm{codim}\,\textrm{gr}^{(\mathbf{0},\mathbf{1},0)}\textrm{gr}^{\Lambda}(\mathcal{R}_F(M))\\
 & = & \textrm{codim}\,\textrm{gr}^{\Lambda+\epsilon.(\mathbf{0},\mathbf{1},0)}(\mathcal{R}_F(M)) \quad\textrm{with}\ \epsilon>0, 
\end{eqnarray*}
by \cite{SST}, Lemma 2.1.6, which proves our assertion since 
$\Lambda+\epsilon.(\mathbf{0},\mathbf{1},0)\in\mathbb{N}^{2n+1}$.

Next, let us see that  
\[
\textrm{codim}(\textrm{gr}^F(\mathcal{R}_V(M)))=\textrm{codim} M.
\]
We will slightly modify the problem using the niceness assumptiom. We can endow $\textrm{gr}^F(D)\simeq\C[x,\xi]$ with a filtration with respect to the weight vector $(-\mathbf{1},\mathbf{1})$, which we still call the $V$-filtration. The module $\textrm{gr}^F(M)\simeq\textrm{gr}^F(D)/\textrm{gr}^F(I)$ is naturally endowed with the quotient $V$-filtration. In the same way as in the proof of Lemma \ref{lemme3}, we have 
\[
\textrm{gr}^F(\mathcal{R}_V(M))=\mathcal{R}_V(\textrm{gr}^F(M)).
\]
Thus we are reduced to show 
$\textrm{codim}(\mathcal{R}_V(\textrm{gr}^F(M))=\textrm{codim} M$.
As before, let $\mu=\textrm{max}(w_i-1)$ and define
$\Lambda=V-(-\mathbf{w},\mathbf{w})+\mu.(\mathbf{0},\mathbf{1})\in\mathbb{N}^{2n}$. 
We have a ring isomorphism 
\[
\mathcal{R}_V(\textrm{gr}^F(D))\simeq \textrm{gr}^F(D)[\theta]
\simeq \mathcal{R}_{\Lambda}(\textrm{gr}^F(D)),
\]
and 
$\mathcal{R}_V(\textrm{gr}^F(M))\simeq\mathcal{R}_{\Lambda}(\textrm{gr}^F(M))$ above this ring isomorphism.  Next, 
\begin{eqnarray*}
\textrm{codim}\,\mathcal{R}_{\Lambda}(\textrm{gr}^F(M)) & = & 
\textrm{codim}\,\textrm{gr}^{\Lambda}\textrm{gr}^F(M)\\
& = & 
\textrm{codim}\,\textrm{gr}^{F+\epsilon\Lambda}(M)\\
& = & \textrm{codim}\,M.
\end{eqnarray*}

Finally, we show that
\[
\textrm{codim}(\textrm{bigr}M)=\textrm{codim}(M).
\]
We have $\textrm{bigr}M\simeq\textrm{gr}^V\textrm{gr}^F(M)$, by Lemma \ref{lemme3}. Taking again
$\Lambda=V-(-\mathbf{w},\mathbf{w})+\mu.(\mathbf{0},\mathbf{1})$, the assertion follows from $\textrm{gr}^V\textrm{gr}^F(M)=\textrm{gr}^{\Lambda}\textrm{gr}^F(M)=
\textrm{gr}^{F+\epsilon\Lambda}(M)$.
\end{proof}

\begin{remark} If $M_A(\beta)$ is nicely bifiltered, then we have 
\[
\mathcal{C}_{F,V}(M_A(\beta);T_1,T_2)_{|T_2=0}=(\mathrm{rank}\, M_A(\beta)).T_1^n.
\]
\end{remark}
Indeed, a bifiltered free resolution induces a $F$-filtered free resolution, 
thus $K_{F}(M;T_1)=K_{F,V}(M;T_1,T_2)_{|T_2=1}$, so 
$K_{F}(M;1-T_1)=K_{F,V}(M;1-T_1,1-T_2)_{|T_2=0}$, and by the Proposition above, we have $\textrm{codim}\,\textrm{gr}^F(\mathcal{R}_V(M_A(\beta)))=\textrm{codim}\,M=\textrm{codim}\,\textrm{gr}^F(M_A(\beta))=n$. We conclude by using Proposition \ref{prop6}.

Let us note for $1\leq i\leq d$, $(Ax\xi)_i=\sum_j a_{i,j}x_j\xi_j\in\textrm{gr}^F(D)$.

\begin{lemme}\label{lemme7}
If $\mathrm{gr}^F(\C[\partial]/I_A)$ is Cohen-Macaulay, then $(Ax\xi)_1,\dots,(Ax\partial)_d$ is a regular sequence in $\mathrm{gr}^F(D/D I_A)$.
\end{lemme}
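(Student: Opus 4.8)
The plan is to reduce the statement to a standard commutative-algebra fact: a sequence of elements in a Cohen–Macaulay ring is regular if and only if it is a system of parameters on each irreducible component, equivalently if it cuts the dimension down by the expected amount. So the real work is to understand $\mathrm{gr}^F(D/DI_A)$ concretely and to compute the codimension of the ideal it generates together with $(Ax\xi)_1,\dots,(Ax\xi)_d$.

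First I would identify the ring. Since $I_A\subset\C[\partial_1,\dots,\partial_n]$ is generated by elements of $\C[\partial]$ (which are already $F$-homogeneous of degree equal to their order), one has $\mathrm{gr}^F(D/DI_A)\simeq\C[x_1,\dots,x_n,\xi_1,\dots,\xi_n]/(I_A)$, where now $I_A$ is read inside $\C[\xi_1,\dots,\xi_n]$ (the toric ideal), and the $x_i$ are free variables. Thus $\mathrm{gr}^F(D/DI_A)\simeq\C[x_1,\dots,x_n]\otimes_\C \bigl(\C[\xi]/I_A\bigr)$. The hypothesis is exactly that $\C[\xi]/I_A=\mathrm{gr}^F(\C[\partial]/I_A)$ is Cohen–Macaulay; since $\C[x]$ is a polynomial ring (hence regular, hence Cohen–Macaulay) and we are over a field, the tensor product $\C[x]\otimes_\C\C[\xi]/I_A$ is again Cohen–Macaulay. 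Its Krull dimension is $n+d$ (because $\dim\C[\xi]/I_A=d$, the rank of $A$), so in $\C[x,\xi]$ (dimension $2n$) this quotient has codimension $n-d$.

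Next I would check that adding the $d$ elements $(Ax\xi)_i=\sum_j a_{i,j}x_j\xi_j$ drops the dimension by exactly $d$, i.e.\ that the quotient $\C[x,\xi]/(I_A+(Ax\xi)_1,\dots,(Ax\xi)_d)$ has dimension $n$ (equivalently codimension $n+d$ in $\C[x,\xi]$, which is codimension $d$ in $\mathrm{gr}^F(D/DI_A)$). For this it suffices to exhibit, on the affine variety $V(I_A)\times\C^n_x$, that the common vanishing of the $(Ax\xi)_i$ has the right dimension; since $d$ elements can drop the dimension by at most $d$, I only need the upper bound $\dim\le n$. One way is a direct count: on the open torus orbit where the $\xi_j$ parametrize a $d$-dimensional torus, the equations $\sum_j a_{i,j}x_j\xi_j=0$ for $i=1,\dots,d$ are $d$ independent linear conditions on $(x_j)$ generically (the matrix $(a_{i,j}\xi_j)_{i,j}$ has rank $d$ for generic $\xi$, since $A$ has rank $d$ and the $\xi_j$ are invertible), cutting the $n$-dimensional $x$-space down to dimension $n-d$; together with the $d$-dimensional torus this gives $n$, and the components supported on smaller torus orbits of $V(I_A)$ contribute no more. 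Hence the codimension is exactly $n+d$ and the drop is exactly $d$.

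Finally, invoking that $\mathrm{gr}^F(D/DI_A)$ is Cohen–Macaulay, a sequence of $d$ elements whose successive quotients drop the dimension by $1$ at each step is automatically a regular sequence: in a Cohen–Macaulay ring, codimension equals depth, so if $\mathrm{codim}(f_1,\dots,f_d)=d$ then $\mathrm{depth}(f_1,\dots,f_d)\ge d$, which forces $f_1,\dots,f_d$ to be a regular sequence (one can also argue inductively, since quotienting a Cohen–Macaulay ring by a regular element leaves it Cohen–Macaulay, and re-apply the codimension computation). I expect the main obstacle to be the dimension count in the previous paragraph: one must be careful that the linear forms $(Ax\xi)_i$ genuinely impose independent conditions along every component of $V(I_A)\times\C^n$, including the non-generic torus orbits where some $\xi_j$ vanish; the cleanest remedy is to note that $\mathrm{gr}^F(\C[\partial]/I_A)$ Cohen–Macaulay implies it is equidimensional of dimension $d$, so it suffices to bound the dimension on a dense open subset, which is the torus-orbit computation above.
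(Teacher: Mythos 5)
Your overall strategy matches the paper's: identify $\mathrm{gr}^F(D/DI_A)$ with $\C[x]\otimes_\C\bigl(\C[\xi]/I_A\bigr)$, note it is Cohen--Macaulay of dimension $n+d$, show the $d$ elements $(Ax\xi)_i$ drop the dimension to exactly $n$, and invoke the standard fact that in a Cohen--Macaulay ring a sequence whose zero locus has codimension equal to its length is regular. The only divergence is that for the dimension count $\dim\C[x,\xi]/(I_A+(Ax\xi))=n$ the paper simply cites the proof of Proposition~3.8 of Schulze--Walther, whereas you attempt a self-contained torus-orbit argument.

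That torus-orbit argument has a gap, and your proposed remedy does not repair it. You compute the dimension over the dense torus orbit of $V(I_A)$, assert that ``the components supported on smaller torus orbits contribute no more,'' and justify this by the (true) fact that the Cohen--Macaulay ring $\C[\xi]/I_A$ is equidimensional, claiming it therefore ``suffices to bound the dimension on a dense open subset.'' That last step is false: a dimension bound on $Z\cap V(f_1,\dots,f_d)$ over a dense open $U\subset Z$ does not control components lying entirely in $Z\setminus U$. (Take $Z=\C^2$, $f_1=x$, $f_2=xy$: the intersection is empty over $\{x\ne0\}$, yet the full intersection is the line $\{x=0\}$.) The repair is to run your rank computation over \emph{every} torus orbit, not only the dense one. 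On the orbit $O_\sigma$ attached to a face $\sigma$ of the cone $\mathbb{R}_{\geq0}A$, the nonvanishing coordinates are exactly the $\xi_j$ with $a_j\in\sigma$, so the matrix $(a_{i,j}\xi_j)_{i,j}$ has rank $\dim\sigma$; the fiber in $x$-space then has dimension $n-\dim\sigma$, and the total over $O_\sigma$ is $\dim\sigma+(n-\dim\sigma)=n$. Since these orbits stratify $V(I_A)$, which is irreducible because $I_A$ is a prime toric ideal, every component of $V(I_A+(Ax\xi))$ has dimension at most $n$, and Krull's height theorem gives the reverse inequality. Note that this corrected dimension count does not use the Cohen--Macaulay hypothesis at all; as you correctly observe, that hypothesis enters only in the final step, converting ``codimension equals $d$'' into ``regular sequence.''
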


\begin{proof}
By \cite{stu05}, Proposition 7.5, $\textrm{dim} (C[\partial]/I_A)=d$. Using Proposition \ref{prop13}, we get 
$\textrm{dim}(\mathrm{gr}^F(D/D I_A))=n+d$. But $\textrm{dim}(\C[x,\xi]/(Ax\xi+\textrm{gr}^F(I_A))=n$ by \cite{SW}, proof of Proposition 3.8. The results follows from the Cohen-Macaulay assumption.
\end{proof}

\subsection{The homogeneous case}

We suppose moreover that the columns of $A$ lie in a common hyperplane, i.e. $(1,\dots,1)$ belongs to the $\mathbb{Q}$-row span of $A$. Then $I_A$ is homogeneous for the weight vector $(1,\dots,1)$ and $M_A(\beta)$ is $V$-homogeneous.

\begin{lemme}\label{lemme9}
$M_A(\beta)$ is nicely bifiltered.
\end{lemme}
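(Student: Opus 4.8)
The plan is to use the Gröbner-basis criterion for niceness supplied by Lemma \ref{lemme8} together with Proposition \ref{prop11}: $M_A(\beta)=D/H_A(\beta)$ is nicely bifiltered if and only if $\mathcal{R}_F(M_A(\beta))=D^{(h)}/H(H_A(\beta))$ is $h$-saturated, and by Proposition \ref{prop11} this holds as soon as there exists a minimal homogeneous Gröbner basis of $H(H_A(\beta))$, with respect to an order adapted to the $F$-filtration, none of whose leading monomials is divisible by $h$. So the first step is to exhibit an explicit such Gröbner basis. The natural generators of $H_A(\beta)$ split into two families: the toric binomials $\partial^u-\partial^v$ (with $Au=Av$) generating $I_A$, and the Euler operators $E_i-\beta_i:=\sum_j a_{i,j}x_j\partial_j-\beta_i$. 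Homogenizing, the binomials are already $F$-homogeneous (they live in $\C[\partial]$), and the Euler operators have $F$-order $1$, so $H(E_i-\beta_i)=\sum_j a_{i,j}x_j\partial_j-\beta_i h$. I would take a term order adapted to $F$ — for instance a position-over-term or block order in which the $\partial$-degree dominates, with $h$ placed so that within a given $F$-degree $h$ is \emph{larger} than monomials involving the $\partial_j$ (this is exactly the convention built into the order $<'$ of Section 4, where among monomials of equal $|\beta|+k$ the one with smaller $|\beta|$ is bigger; I use the opposite so $h$ is small, or rather I arrange that leading monomials avoid $h$). The key point is that for such an order the leading term of $H(E_i-\beta_i)$ is one of the $x_j\partial_j$, never the $\beta_i h$ term, and the leading terms of the homogenized binomials are pure $\partial$-monomials.

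The second and central step is to verify, via Buchberger's criterion, that after completing $\{\partial^u-\partial^v\}\cup\{H(E_i-\beta_i)\}$ to a Gröbner basis, no element with an $h$-divisible leading term is forced in. This is where the $V$-homogeneity hypothesis (the columns of $A$ lie in a common hyperplane, so $I_A$ is homogeneous for the weight $(1,\dots,1)$) does the real work. Under this hypothesis the whole ideal $H_A(\beta)$, hence $H(H_A(\beta))$, is homogeneous for an extra grading (the $W$-grading, and the total-degree grading on the $\partial_j$), and a homogeneous ideal has a Gröbner basis of homogeneous elements; moreover the $S$-polynomial reductions preserve this homogeneity. The crucial observation is that $h$ only ever appears through homogenization of the degree-$1$ Euler operators, so an $S$-polynomial between two generators either lives in $\C[x,\partial,h]$ with the $h$-power controlled purely by $F$-degree bookkeeping, or — for the toric binomials — lives in $\C[\partial]$ entirely. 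One shows that every such $S$-polynomial reduces to zero \emph{without} introducing a new leading monomial divisible by $h$, essentially because the $F$-order of each $S$-polynomial is strictly less than the sum predicted by the leading monomials, exactly as in the contradiction argument $3)\Rightarrow 1)$ of Proposition \ref{prop11}. Equivalently, one can argue directly: if $hP\in H(H_A(\beta))$ with $P$ homogeneous, then setting $h=1$ gives $P|_{h=1}\in H_A(\beta)$, and using the $V$-homogeneity one re-homogenizes to conclude $P\in H(H_A(\beta))$, which is precisely $h$-saturatedness.

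The step I expect to be the main obstacle is controlling the interaction between the binomial part $I_A$ and the Euler operators under homogenization — specifically, showing that $S$-polynomials mixing a toric binomial $\partial^u-\partial^v$ with a homogenized Euler operator $H(E_i-\beta_i)$ reduce without spawning an $h$-divisible leading term. The binomials can have high $\partial$-degree while the Euler operators have $\partial$-degree $1$, so the homogenization powers of $h$ are genuinely asymmetric, and one must check that the reduction of such an $S$-pair never "uses up" $\partial$-degree in a way that leaves a bare $h$ behind. I would handle this by working with the $V$-homogenization picture simultaneously: the hypothesis that $M_A(\beta)$ is $V$-homogeneous means $\mathbf{R}_V(M_A(\beta))$ is $\theta$-saturated for a structural reason (homogeneity $\Rightarrow$ the graded object equals a polynomial-ring quotient with no $\theta$-torsion), and by Lemma \ref{lemme8} part (3) $\theta$-saturatedness of $\mathrm{gr}^F(\mathcal{R}_V(M_A(\beta)))$ is equivalent to niceness — so in fact, once $V$-homogeneity is granted, one can bypass the delicate mixed-$S$-polynomial analysis entirely and deduce $\theta$-saturatedness directly from the fact that a $W$- and $V$-bihomogeneous ideal admits a bihomogeneous Gröbner basis whose initial ideal inherits the bigrading, forcing the $\theta$-torsion to vanish. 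That is the route I would actually write up, keeping the Gröbner-basis criterion of Proposition \ref{prop11} as the conceptual backbone.
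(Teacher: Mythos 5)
Your final paragraph eventually lands on the right idea, but the route to it is muddled and the justifications you give along the way do not actually close the gap. The paper's proof is two lines and uses none of the Gröbner-basis machinery you spend most of the proposal on: since $I_A$ is $(1,\dots,1)$-homogeneous, the ideal $H_A(\beta)$ is $V$-homogeneous, hence so is $\mathcal{R}_F(M_A(\beta))$; therefore $\mathrm{gr}^V(\mathcal{R}_F(M_A(\beta)))\simeq\mathcal{R}_F(M_A(\beta))$, and the latter is $h$-saturated \emph{automatically} (the Rees module with respect to any filtration is always saturated for its Rees variable). Lemma~\ref{lemme8}(2) then gives niceness immediately.

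Two concrete problems with what you wrote. First, the ``direct'' argument in your second paragraph (``if $hP\in H(H_A(\beta))$, set $h=1$, re-homogenize'') only proves that $\mathcal{R}_F(M_A(\beta))=D^{(h)}/H(H_A(\beta))$ is $h$-saturated --- but that is always true, for any module and any good filtration, and does not use $V$-homogeneity at all. Lemma~\ref{lemme8} requires $h$-saturatedness of $\mathrm{gr}^V(\mathcal{R}_F(M))$, a genuinely stronger condition; without the observation that $V$-homogeneity makes $\mathrm{gr}^V$ a no-op on $\mathcal{R}_F(M)$, your argument does not reach it. Second, in the last paragraph you assert that ``$\mathbf{R}_V(M_A(\beta))$ is $\theta$-saturated for a structural reason'' as if this were the point --- again, $\mathcal{R}_V(M)$ is always $\theta$-saturated; what must be checked is $\theta$-saturatedness of $\mathrm{gr}^F(\mathcal{R}_V(M))$, and your appeal to ``a bihomogeneous Gröbner basis whose initial ideal inherits the bigrading, forcing the $\theta$-torsion to vanish'' is too vague to count as a proof of that. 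The clean way to finish, via either item of Lemma~\ref{lemme8}, is to say that the relevant graded Rees module is \emph{isomorphic} to the ungraded one because the filtration being graded comes from an actual grading --- that is the only place $V$-homogeneity is used, and it is the step your write-up never isolates. Once this is stated, the long excursion through Proposition~\ref{prop11} and explicit $S$-polynomial analysis can be deleted entirely.
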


Indeed, $M_A(\beta)$ is $V$-homogeneous, thus $\mathbf{R}_F(M_A(\beta))$ is also $V$-homogeneous, thus $\textrm{gr}^V\mathbf{R}_F(M_A(\beta))\simeq \mathbf{R}_F(M_A(\beta))$ is $h$-saturated. Then apply Lemma \ref{lemme8}.

\begin{lemme}\label{lemme5}
Let $R=\mathrm{bigr} D$ and $M$ be a finite type bigraded $R$-module. Let $P\in R$ be bihomogeneous of degree $(d,k)$. If $P$ is a non zero-divisor on $M$ then
\begin{enumerate}
\item $K_{F,V}(M/PM;T_1,T_2)=(1-T_1^dT_2^k)K_{F,V}(M;T_1,T_2)$ and
\item $\mathcal{C}_{F,V}(M/PM;T_1,T_2)=(dT_1+kT_2)\mathcal{C}_{F,V}(M;T_1,T_2)$.
\end{enumerate}
\end{lemme}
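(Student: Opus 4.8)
The plan is to reduce both statements to the corresponding facts about $K$-polynomials and multidegrees over the commutative polynomial ring $R=\mathrm{bigr}\,D\simeq\C[\tilde x_i,\tilde t_i,\theta,\tilde\partial_{x_i},\tilde\partial_{t_i}]$, which is bigraded as described in Section 3, and then invoke the definitions of $K_{F,V}$ and $\mathcal C_{F,V}$ together with Propositions \ref{prop1} and \ref{prop3}. The key point is that $P\in R$ bihomogeneous of degree $(d,k)$ being a non-zero-divisor on $M$ means we have a short exact sequence of bigraded $R$-modules
\[
0\to M[-d][-k]\stackrel{\cdot P}{\to} M\to M/PM\to 0,
\]
where $[-d][-k]$ denotes the shift placing the degree-$(d,k)$ part of $M$ in degree $(0,0)$, i.e.\ multiplication by $T_1^dT_2^k$ at the level of $K$-polynomials.

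First I would establish assertion (1). Since the above sequence is exact, splicing free resolutions of $M[-d][-k]$ and $M/PM$ (or simply using additivity of the $K$-polynomial on short exact sequences, which follows from Proposition \ref{prop1} applied to the mapping cone) gives
\[
K(M/PM;T_1,T_2)=K(M;T_1,T_2)-K(M[-d][-k];T_1,T_2)=(1-T_1^dT_2^k)\,K(M;T_1,T_2).
\]
It then remains to observe that $K_{F,V}(M;T_1,T_2)$ is by definition the $K$-polynomial of $M$ as a bigraded $R$-module — strictly speaking one should note that if $M$ here stands for a bifiltered $D$-module then $\mathrm{bigr}\,M$ plays this role, and $M/PM$ should be read accordingly; in the application $M$ is already a bigraded $R$-module so no translation is needed. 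This gives (1).

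For assertion (2), substitute $T_i\mapsto 1-T_i$ in (1): we get
\[
K_{F,V}(M/PM;1-T_1,1-T_2)=\bigl(1-(1-T_1)^d(1-T_2)^k\bigr)\,K_{F,V}(M;1-T_1,1-T_2).
\]
The lowest-degree term of $1-(1-T_1)^d(1-T_2)^k$ is $dT_1+kT_2$. If $\mathcal C_{F,V}(M;T_1,T_2)$ collects the terms of total degree $e:=\mathrm{codim}\,M$ in $K_{F,V}(M;1-T_1,1-T_2)$ (and these are the lowest-degree terms, by Proposition \ref{prop3}(2) applied over the positively bigraded ring $\mathbb K\otimes R$ — one works over $\mathbb K=\mathrm{Frac}(\C[x])$ as in the definition of $\mathcal C_{F,V}$), then multiplying by $dT_1+kT_2$ shows that the lowest-degree terms of $K_{F,V}(M/PM;1-T_1,1-T_2)$ have total degree $e+1$ and equal $(dT_1+kT_2)\,\mathcal C_{F,V}(M;T_1,T_2)$. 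Finally, $\mathrm{codim}\,(M/PM)=\mathrm{codim}\,M+1$ because $P$ is a non-zero-divisor (so $\dim M/PM=\dim M-1$), hence these lowest-degree terms are exactly $\mathcal C_{F,V}(M/PM;T_1,T_2)$, proving (2).

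The main obstacle I anticipate is purely bookkeeping rather than conceptual: one must make sure the grading shift conventions ($[b]_a=S_{a-b}$ from Section 1 versus the bifiltration shifts $[\mathbf n][\mathbf m]$) produce the sign $T_1^dT_2^k$ and not its inverse, and one must be careful that Proposition \ref{prop3}(2) genuinely applies — this requires passing to $\mathbb K\otimes R$, which is positively bigraded (the offending $A_{0,0}=\mathbb K[(\tilde t_i\theta)]$ does not obstruct positivity in the sense needed once we are careful, or else one argues directly that no lower-degree terms can appear using that $1-T_1^dT_2^k$ has no constant term). Once these conventions are pinned down the argument is immediate.
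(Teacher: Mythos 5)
Your proof is correct and takes essentially the same route as the paper: the short exact sequence $0\to M[d][k]\to M\to M/PM\to 0$ (in the paper's notation, $S_{d,k}(M)\to M$) together with the mapping-cone construction of a free resolution of $M/PM$ yields (1), and (2) follows by substituting $T_i\mapsto 1-T_i$ and reading off lowest-degree terms. Two small slips in your write-up, both self-corrected in your closing remarks: with the paper's convention $S[b]_a=S_{a-b}$ the degree-preserving multiplication map is $M[d][k]\stackrel{\cdot P}{\to}M$, not $M[-d][-k]\to M$, which is what makes $K$ get multiplied by $T_1^dT_2^k$ rather than its inverse; and your worry about positivity is unfounded because $\mathbb K\otimes\mathrm{bigr}\,D$ is positively bigraded (it contains no $\theta$ — the non-positivity the paper points out concerns $A=\mathrm{gr}^F\mathcal R_V(\mathbb K\otimes D)$, a different ring), so Proposition \ref{prop3}(2) applies directly.
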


\begin{proof}
Let us prove 1). 
If $N$ is a bigraded $R$-module, let $S_{d,k}(N)$ be the bigraded module defined by 
$(S_{d,k}(N))_{d',k'}=N_{d'-d,k'-k}$. In particular, 
$S_{d,k}(D^r[\mathbf{n}][\mathbf{m}])=D^r[\mathbf{n}+d.\mathbf{1}][\mathbf{m}+k.\mathbf{1}]$. 
A bigraded free resolution
\[
\cdots\to\mathcal{L}_1\to\mathcal{L}_0\to M\to 0
\]
 of $M$ induces a bigraded free resolution
\[
\cdots\to S_{d,k}(\mathcal{L}_1)\to S_{d,k}(\mathcal{L}_0)\to S_{d,k}(M)\to 0
\]
 of $S_{d,k}(M)$.
We have a bigraded exact sequence 
\[
0\to S_{d,k}(M)\stackrel{P.}{\to}M\to\frac{M}{PM}\to 0.
\]
Then taking the cone of the morphism of resolutions 
$S_{d,k}(\mathcal{L}_{\bullet})\stackrel{P.}{\to} \mathcal{L}_{\bullet}$ gives a resolution 
\[
\cdots\to S_{d,k}(\mathcal{L}_1)\oplus\mathcal{L}_2\to 
S_{d,k}(\mathcal{L}_0)\oplus\mathcal{L}_1\to
\mathcal{L}_0\to \frac{M}{PM}\to 0
\]
of $M/PM$.
Then 1) follows, and 2) follows from 1).
\end{proof}

Let us denote by $\mathrm{vol}(A)$ the normalized volume of the convex hull in $\mathbb{R}^d$ of the set $\{0,a_1,\dots,a_n\}$. The normalization means that the set $[0,1]\times\dots\times[0,1]\subset\mathbb{R}^d$ has volume 
$d!$.

\begin{theo}\label{theo2}
If $\C[\partial]/I_A$ is homogeneous and Cohen-Macaulay, then for any $\beta\in\C^d$ we have
\[
\mathcal{C}_{F,V}(M_A(\beta);T_1,T_2)=
\mathrm{vol}(A).\sum_{j=d}^{n}\binom{n-d}{j-d}T_1^jT_2^{n-j},
\]
\end{theo}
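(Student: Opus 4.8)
The plan is to transport the whole computation into the commutative bigraded ring $R=\textrm{bigr}(D)\simeq\C[x,\xi]$, with $\deg x_i=(0,-1)$ and $\deg\xi_i=(1,1)$, for which the choice $Y=\{0\}\subset\C^n$ is crucial: no variable has bidegree $(0,0)$, so $R$ is \emph{positively} multigraded. By Lemma~\ref{lemme9}, $M_A(\beta)$ is nicely bifiltered, so Proposition~\ref{prop14} gives $K_{F,V}(M_A(\beta);T_1,T_2)=K(\textrm{bigr}(M_A(\beta));T_1,T_2)$; by Proposition~\ref{prop15} the modules $M_A(\beta)$, $\textrm{gr}^F(\mathbf{R}_V(M_A(\beta)))$ and $\textrm{bigr}(M_A(\beta))$ all have codimension $n$, so Proposition~\ref{prop3} identifies $\mathcal{C}_{F,V}(M_A(\beta);T_1,T_2)$ with the lowest-degree part of $K(\textrm{bigr}(M_A(\beta));1-T_1,1-T_2)$, i.e.\ with the commutative multidegree $\mathcal{C}(\textrm{bigr}(M_A(\beta));T_1,T_2)$. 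Thus it suffices to compute this multidegree. I would do it in two steps: first present $\textrm{bigr}(M_A(\beta))$ as a regular-sequence quotient of $\textrm{bigr}(M_A)$, where $M_A:=D/DI_A$; then compute $\mathcal{C}(\textrm{bigr}(M_A))$.

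First, the regular-sequence identification. Since $\C[\partial]/I_A$ is homogeneous, $I_A$ is generated by binomials $\partial^u-\partial^v$ with $|u|=|v|$; these coincide with their own $F$-leading forms and are $V$-homogeneous, so with the quotient bifiltration $\textrm{gr}^F(M_A)\simeq\C[x,\xi]/\C[x,\xi]\bar I_A$, where $\bar I_A$ is $I_A$ read in the variables $\xi$, and this is genuinely $V$-graded. As $M_A$ is $V$-homogeneous it is nicely bifiltered (the argument of Lemma~\ref{lemme9}), so by Lemma~\ref{lemme3} and the triviality of $\textrm{gr}^V$ on a $V$-graded module, $\textrm{bigr}(M_A)\simeq\textrm{gr}^F(M_A)$. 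Now $E_i-\beta_i$ has $F$-symbol $(Ax\xi)_i$, of bidegree $(1,0)$ in $R$; and in the homogeneous case $\textrm{gr}^F(\C[\partial]/I_A)=\C[\partial]/I_A$ is Cohen--Macaulay by hypothesis, so Lemma~\ref{lemme7} makes $(Ax\xi)_1,\dots,(Ax\xi)_d$ a regular sequence on $\textrm{gr}^F(M_A)$. The standard fact that the symbols of a regular sequence present the associated graded of the quotient --- applied to the $F$-filtration, one element at a time --- gives $\textrm{gr}^F(M_A(\beta))=\textrm{gr}^F(M_A)/\big((Ax\xi)_1,\dots,(Ax\xi)_d\big)$, which is still $V$-graded; since $M_A(\beta)$ is nicely bifiltered, once more $\textrm{bigr}(M_A(\beta))\simeq\textrm{gr}^F(M_A(\beta))=\textrm{bigr}(M_A)/\big((Ax\xi)_1,\dots,(Ax\xi)_d\big)$.

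Next, the multidegree of $\textrm{bigr}(M_A)$ and the conclusion. We have $\textrm{bigr}(M_A)\simeq\C[x]\otimes_\C\big(\C[\xi]/\bar I_A\big)$; a free resolution of $\C[\xi]/\bar I_A$ over $\C[\xi]$ base-changes to a bigraded free resolution over $R$ with the same shifts, all on the diagonal (since $\deg\xi_i=(1,1)$), so $K(\textrm{bigr}(M_A);T_1,T_2)=\widetilde K_A(T_1T_2)$, where $\widetilde K_A(s)$ is the $K$-polynomial of the Cohen--Macaulay, $d$-dimensional ring $\C[\xi]/\bar I_A$ for the standard grading; hence $\widetilde K_A(s)=h(s)(1-s)^{n-d}$ with $h(1)$ the multiplicity of the toric ring $\C[\xi]/\bar I_A$, which is $\textrm{vol}(A)$. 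Substituting $T_i\mapsto 1-T_i$ and using $1-(1-T_1)(1-T_2)=T_1+T_2-T_1T_2$, the lowest-degree part of $h\big((1-T_1)(1-T_2)\big)\,(T_1+T_2-T_1T_2)^{n-d}$ is $\textrm{vol}(A)\,(T_1+T_2)^{n-d}$, so $\mathcal{C}(\textrm{bigr}(M_A);T_1,T_2)=\textrm{vol}(A)\,(T_1+T_2)^{n-d}$. Finally, applying Lemma~\ref{lemme5} (its second assertion) $d$ times to the regular sequence above, whose members have bidegree $(1,0)$,
\[
\mathcal{C}_{F,V}(M_A(\beta);T_1,T_2)=T_1^d\,\mathcal{C}(\textrm{bigr}(M_A);T_1,T_2)=\textrm{vol}(A)\,T_1^d(T_1+T_2)^{n-d}=\textrm{vol}(A)\sum_{j=d}^{n}\binom{n-d}{j-d}T_1^jT_2^{n-j}.
\]

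The main obstacle is the first step: one must verify that in the homogeneous situation the $V$-operation is actually a grading on the $F$-associated graded modules involved, so that $\textrm{bigr}$ reduces to $\textrm{gr}^F$ there, and that niceness (Lemma~\ref{lemme9}) together with Cohen--Macaulayness --- the latter entering only through Lemma~\ref{lemme7} --- are precisely what make the Euler symbols a genuine regular sequence cutting out $\textrm{bigr}(M_A(\beta))$. Without the Cohen--Macaulay hypothesis these symbols fail to be a regular sequence, higher syzygies appear, and the clean product form of the answer breaks down.
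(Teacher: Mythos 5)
Your proof is correct and follows essentially the same route as the paper's: reduce $\mathcal{C}_{F,V}$ to the commutative multidegree of $\textrm{bigr}(M_A(\beta))$ via Lemma \ref{lemme9}, Proposition \ref{prop14} and Proposition \ref{prop15}, identify $\textrm{bigr}(M_A(\beta))$ with $\textrm{gr}^F(M_A(\beta))$ and cut out the Euler symbols as a regular sequence using Lemma \ref{lemme7} (the paper cites \cite{SST}, Theorem 4.3.8 for the ``symbols of a regular sequence present the associated graded'' step you invoke as standard), then peel them off with Lemma \ref{lemme5} and compute the multidegree of $\textrm{bigr}(D/DI_A)$ by the diagonal specialization $K(\cdot;T_1,T_2)=Q(T_1T_2)$. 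The only cosmetic difference is that the paper expands $Q(1-T)=b_{n-d}T^{n-d}+O(n-d+1)$ directly from positivity of the grading, whereas you factor $\widetilde K_A(s)=h(s)(1-s)^{n-d}$ (which holds for any $d$-dimensional module, not only Cohen--Macaulay ones, so attributing it to Cohen--Macaulayness is a harmless red herring; Cohen--Macaulayness is needed only for the regular-sequence step, as you correctly note at the end).
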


\begin{proof}
By Proposition \ref{prop14}, Proposition \ref{prop15} and Lemma \ref{lemme9},
$\mathcal{C}_{F,V}(M_A(\beta);T_1,T_2)$ is equal to the sum of the terms of least degree in $K_{F,V}(\textrm{bigr}M_A(\beta);1-T_1,1-T_2)$, and by Lemma \ref{lemme3} we have 
\[
\textrm{bigr}M_A(\beta)\simeq\textrm{gr}^F\textrm{gr}^V(M_A(\beta))=\textrm{gr}^F(M_A(\beta)).
\]
When $\C[\xi]/I_A$ is Cohen-Macaulay, $(Ax\xi)_1,\dots,(Ax\xi)_d$ form a regular sequence in 
$\C[x,\xi]/I_A$, and $\textrm{gr}^F(H_A(\beta))$ is generated by $I_A$ and $(Ax\xi)_1,\dots,(Ax\xi)_d$, by Lemma \ref{lemme7} and \cite{SST}, Theorem 4.3.8.
Using Lemma \ref{lemme5} repeatedly, we get 
\[
\mathcal{C}_{F,V}(\textrm{gr}^F(M_A(\beta));T_1,T_2)=T_1^d.\mathcal{C}_{F,V}(\C[x,\xi]/I_A;T_1,T_2).
\] 
But $\mathcal{C}_{F,V}(\C[x,\xi]/I_A;T_1,T_2)=\mathcal{C}_{F,F}(\C[\xi]/I_A;T_1,T_2)$ since $I_A\subset \C[\xi]$. 
Let $R=\C[\xi]$, $P(T_1,T_2)=K_{F,F}(R/I_A;T_1,T_2)$ and 
 $Q(T)=K_{F}(R/I_A;T)$. Consider a graded free resolution 
\[
0\to R^{r_{\delta}}[\mathbf{n}^{(\delta)}]\to\cdots R^{r_{0}}[\mathbf{n}^{(0)}]\to R/I_A\to 0
\]
of $R/I_A$.
Then we have a bigraded free resolution 
\[
0\to R^{r_{\delta}}[\mathbf{n}^{(\delta)}][\mathbf{n}^{(\delta)}]\to
\cdots R^{r_{0}}[\mathbf{n}^{(0)}][\mathbf{n}^{(0)}]\to R/I_A\to 0
\]
of $R/I_A$. 
We deduce that $P(T_1,T_2)=Q(T_1T_2)$. We have $Q(1-T)=b_{n-d}T^{n-d}+O(n-d+1)$, with 
$b_{n-d}=\textrm{deg}(R/I_A)\neq 0$, and $O(n-d+1)$ denotes a polynomial of valuation greater than $n-d$. By \cite{GKZ94}, Chapter 6, Theorem 2.3, $\textrm{deg}(R/I_A)=\textrm{vol}(A)$.
We have
\begin{eqnarray*}
P(1-T_1,1-T_2) & = & Q((1-T_1)(1-T_2))\\
 & = & Q(1-(T_1+T_2-T_1T_2))\\
 & = & b_{n-d} (T_1+T_2)^{n-d}+O(n-d+1)\\
 & = &  b_{n-d}\left(\sum_{i=0}^{n-d}\binom{n-d}{i}T_1^iT_2^{n-d-i}\right)+O(n-d+1),
\end{eqnarray*}
from which the statement follows.
\end{proof}

To compute the multidegree in the following examples, we used the computer algebra systems Singular \cite{singular} and Macaulay2 \cite{M2}.

\paragraph{Example 1.} Let 
$A=\left(
\begin{array}{ccc}
1 & 1& 1\\
0 & 1 & 2
\end{array}
\right).
$
Then $I_A$ is generated by $\partial_1\partial_3-\partial_2^2$. For all $\beta$,
$\mathcal{C}_{F,V}(M_A(\beta);T_1,T_2)=2T_1^3+2T_1^2T_2$.

\paragraph{Example 2.} Let 
$
A=\left(
\begin{array}{cccc}
1 & 1& 1 & 1\\
0 & 1 & 2 & 3
\end{array}
\right).
$
Then $I_A$ is generated by 
$\partial_2\partial_4-\partial_3^2,
\partial_1\partial_4-\partial_2\partial_3,
\partial_1\partial_3-\partial_2^2.$
For all $\beta$,
$\mathcal{C}_{F,V}(M_A(\beta);T_1,T_2)=3T_1^4+6T_1^3T_2+3T_1^2T_2^2.$
\paragraph{} Let us give homogeneous non-Cohen-Macaulay examples from the book \cite{SST}.
Using Proposition \ref{prop10} repeatedly, we can establish the existence of a stratification of the space of the parameters $\beta_1,\beta_2$ by the multidegree. In the following two examples, this stratification equals the stratification by the holonomic rank.  
\paragraph{Example 3.} Let 
$
A=\left(
\begin{array}{cccc}
1 & 1& 1 & 1\\
0 & 1 & 3 & 4
\end{array}
\right).
$
Then $I_A$ is generated by 
$\partial_2\partial_4^2-\partial_3,\partial_1\partial_4-\partial_2\partial_3,\partial_1\partial_3^2-\partial_2^2\partial_4,\partial_1^2\partial_3-\partial_2^3$.
For $(\beta_1,\beta_2)\neq (1,2)$, we have 
\[
\mathcal{C}_{F,V}(M_A(\beta);T_1,T_2)=4T_1^4+8T_1^3T_2+4T_1^2T_2^2.
\]
For $(\beta_1,\beta_2)= (1,2)$, we have 
\[
\mathcal{C}_{F,V}(M_A(\beta);T_1,T_2)=5T_1^4+12T_1^3T_2+10T_1^2T_2^2+4T_1T_2^3+T_2^4.
\]
\paragraph{Example 4.} Let 
$
A=\left(
\begin{array}{ccccc}
1 & 1& 1 & 1 & 1\\
0 & 2 & 4 & 7 & 9
\end{array}
\right).
$
Then $I_A$ is generated by 
$\partial_2\partial_4-\partial_3^2,\partial_1^2-\partial_2\partial_3$.
Let $E=\{(2,10),(2,12),(3,19)\}.$
For $(\beta_1,\beta_2)\notin E$, we have 
\[
\mathcal{C}_{F,V}(M_A(\beta);T_1,T_2)=9T_1^5+27T_1^4T_2+27T_1^3T_2^2+9T_1^2T_2^3.
\]
For $(\beta_1,\beta_2)\in E$, we have 
\[
\mathcal{C}_{F,V}(M_A(\beta);T_1,T_2)=10T_1^5+32T_1^4T_2+37T_1^3T_2^2+19T_1^2T_2^3+5T_1T_2^4+T_2^5.
\]

\subsection{The inhomogeneous case}

%
%

Following arguments in the book \cite{SST}, we extend Theorem \ref{theo2} in the inhomogeneous case, for generic parameters $\beta$.

\begin{theo}\label{theo3}
Assume that $\C[\partial,h]/H(I_A)$ is Cohen-Macaulay. Then for generic $\beta$, the module $M_A(\beta)$ is nicely bifiltered and 
\[
\mathcal{C}_{F,V}(M_A(\beta);T_1,T_2)=
\mathrm{vol}(A).\sum_{j=d}^{n}\binom{n-d}{j-d}T_1^jT_2^{n-j}.
\]
\end{theo}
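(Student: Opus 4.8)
The plan is to mimic the proof of Theorem \ref{theo2} essentially word for word, replacing $I_A$ throughout by the initial ideal $\mathrm{in}_{(1,\dots,1)}(I_A)$ of the toric ideal with respect to total degree, and compensating for the loss of $V$-homogeneity of $M_A(\beta)$ by a genericity argument in $\beta$. Concretely, the two things to be supplied are: (i) that $M_A(\beta)$ is nicely bifiltered for generic $\beta$, so that Lemma \ref{lemme3} and Proposition \ref{prop14} apply; and (ii) that for generic $\beta$ the ideal $\mathrm{gr}^F(H_A(\beta))$ is generated by $\mathrm{in}_{(1,\dots,1)}(I_A)$ together with the symbols $(Ax\xi)_1,\dots,(Ax\xi)_d$ of the Euler operators, these forming a regular sequence on $\C[x,\xi]/\mathrm{in}_{(1,\dots,1)}(I_A)$.

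First I would carry out the genericity reduction. View $\beta_1,\dots,\beta_d$ as indeterminates and set $\mathbb{L}=\mathrm{Frac}(\C[\beta])$. Each of the modules governing $\mathcal{C}_{F,V}(M_A(\beta))$ — namely $\mathrm{gr}^F(\mathcal{R}_V(M_A(\beta)))$, $\mathrm{gr}^V(\mathcal{R}_F(M_A(\beta)))$, $\mathrm{bigr}(M_A(\beta))$ and $\mathrm{gr}^F(M_A(\beta))$ — is a finite module over a commutative polynomial ring presented by an ideal admitting a Gr\"obner basis over $\C[\beta]$. By Propositions \ref{prop9} and \ref{prop10} there is a nonzero $q\in\C[\beta]$ such that, for $\beta\notin(q=0)$, all these Gr\"obner bases specialize; hence the codimensions, $K$-polynomials, multidegrees, and — via the criterion of Proposition \ref{prop11} — the $h$- and $\theta$-saturatedness of all these modules coincide with their generic ($\mathbb{L}$-linear) counterparts. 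So it is enough to prove niceness and the formula over $\mathbb{L}$.

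The core of the argument, and the step I expect to be the main obstacle, is to establish (i) and (ii) from the Cohen--Macaulay hypothesis. Since $I_A$ is prime, its homogenization $H(I_A)$ is prime and $h$ is a non-zero-divisor on $\C[\partial,h]/H(I_A)$; hence $\C[\partial,h]/(H(I_A)+(h))\simeq\C[\partial]/\mathrm{in}_{(1,\dots,1)}(I_A)$ is Cohen--Macaulay of dimension $d$, so $\C[\xi]/\mathrm{in}_{(1,\dots,1)}(I_A)$ is Cohen--Macaulay and the analogue of Lemma \ref{lemme7} — using \cite{SW}-type dimension counts — makes $(Ax\xi)_1,\dots,(Ax\xi)_d$ a regular sequence on $\C[x,\xi]/\mathrm{in}_{(1,\dots,1)}(I_A)$. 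Next, using the $W$-homogeneity of $M_A(\beta)$ and arguing generically in $\beta$ as in \cite{SST}, Chapter 4, one shows that $H(H_A(\beta))$ is generated in $\mathcal{R}_F(D)=D^{(h)}$ by $H(I_A)$ and the $V$-homogeneous operators $E_1-\beta_1h,\dots,E_d-\beta_dh$, which form a regular sequence modulo $H(I_A)$; as $h$ is a non-zero-divisor on $\mathrm{gr}^V(D^{(h)}/H(I_A)D^{(h)})$ and remains one after quotienting by this $V$-homogeneous regular sequence, $\mathrm{gr}^V(\mathcal{R}_F(M_A(\beta)))$ is $h$-saturated, i.e.\ $M_A(\beta)$ is nicely bifiltered by Lemma \ref{lemme8}; passing to $\mathrm{gr}^F$ in the same presentation yields (ii).

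Granting (i) and (ii), the computation is exactly that of Theorem \ref{theo2}. By Lemma \ref{lemme3} and Proposition \ref{prop14}, $\mathcal{C}_{F,V}(M_A(\beta);T_1,T_2)$ is the sum of the lowest-degree terms of $K_{F,V}(\mathrm{bigr}(M_A(\beta));1-T_1,1-T_2)$ and $\mathrm{bigr}(M_A(\beta))\simeq(\C[x,\xi]/\mathrm{in}_{(1,\dots,1)}(I_A))/((Ax\xi)_1,\dots,(Ax\xi)_d)$; by Proposition \ref{prop15} every module in play has codimension $n$, so Lemma \ref{lemme5}, applied $d$ times to the bidegree-$(1,0)$ regular sequence $(Ax\xi)_i$, gives
\[
\mathcal{C}_{F,V}(M_A(\beta);T_1,T_2)=T_1^{\,d}\,\mathcal{C}_{F,F}\big(\C[\xi]/\mathrm{in}_{(1,\dots,1)}(I_A);T_1,T_2\big).
\]
Writing $Q(T)=K_F(\C[\xi]/\mathrm{in}_{(1,\dots,1)}(I_A);T)$, one has as in Theorem \ref{theo2} that $K_{F,F}(\C[\xi]/\mathrm{in}_{(1,\dots,1)}(I_A);T_1,T_2)=Q(T_1T_2)$, while $Q(1-T)=\mathrm{vol}(A)\,T^{n-d}+O(T^{n-d+1})$ because passing to an initial ideal preserves dimension and degree and $\deg(\C[\partial]/I_A)=\mathrm{vol}(A)$ by \cite{GKZ94}, Chapter 6, Theorem 2.3. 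Substituting $T=(1-T_1)(1-T_2)=1-(T_1+T_2-T_1T_2)$ and keeping the terms of total degree $n-d$ gives $\mathcal{C}_{F,F}(\C[\xi]/\mathrm{in}_{(1,\dots,1)}(I_A);T_1,T_2)=\mathrm{vol}(A)(T_1+T_2)^{n-d}$, whence, multiplying by $T_1^{d}$ and re-indexing $j=d+i$, the asserted formula.
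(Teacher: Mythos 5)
Your genericity reduction (specializing Gr\"obner bases via Propositions \ref{prop9}, \ref{prop10}, \ref{prop11}) and your final computation (applying Lemma \ref{lemme5} repeatedly, the identity $K_{F,F}=Q(T_1T_2)$, then the binomial expansion) are correct and coincide with the paper's. The gap lies in the niceness step and the generation claim (ii), which you handle by a different and incompletely justified route.

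You work on the $\mathcal{R}_F$ side: you assert that for generic $\beta$ the $F$-homogenization $H(H_A(\beta))$ is generated in $D^{(h)}$ by $H(I_A)$ together with $E_1-\beta_1 h,\dots,E_d-\beta_d h$, and that $h$ is a non-zero-divisor on $\textrm{gr}^V\bigl(D^{(h)}/H(I_A)D^{(h)}\bigr)$, so that quotienting by the $V$-homogeneous regular sequence $E_i-\beta_i h$ preserves $h$-torsion-freeness, giving $h$-saturation of $\textrm{gr}^V(\mathcal{R}_F(M_A(\beta)))$. Both assertions are nontrivial and left unproven. The second one is exactly the statement that $D/DI_A$ itself is nicely bifiltered (by Lemma \ref{lemme8}); since $I_A$ is not $F$-homogeneous, a generator $\partial^u-\partial^v$ with $|u|\neq|v|$ $F$-homogenizes to $\partial^u-\partial^v h^{|u|-|v|}$, whose $V$-symbol is a single monomial, and whether $\textrm{gr}^V(H(I_A))$ can be generated free of $h$ is precisely what must be proved, not assumed. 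The first assertion---that $F$-homogenization commutes with the ideal sum $DI_A+\sum_i D(E_i-\beta_i)$ generically in $\beta$---is also not something a pointer to ``\cite{SST}, Chapter 4'' establishes.

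The paper instead works on the $\mathcal{R}_V$ side, where the needed input is actually available. For generic $\beta$, \cite{SST}, Theorem 3.1.3 (with $w=(1,\dots,1)$), combined with \cite{oaku01b}, Theorem 2.5, gives
\[
H^V(H_A(\beta))=D[\theta]H^V(I_A)+\sum_i D[\theta]\bigl((Ax\partial)_i-\beta_i\bigr),
\]
where the Euler operators are already $V$-homogeneous of degree $0$ and need no homogenization. The Cohen--Macaulay hypothesis (via Lemma \ref{lemme7}) makes $(Ax\xi)_1,\dots,(Ax\xi)_d$ a regular sequence on $\textrm{gr}^F(D[\theta])/\textrm{gr}^F(H^V(I_A))$, so by \cite{SST}, Proposition 4.3.2 the displayed generators form an $F$-involutive basis; hence $\textrm{gr}^F(H^V(H_A(\beta)))$ is generated by $\textrm{gr}^F(I_A)$ and the $(Ax\xi)_i$, all independent of $\theta$. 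That gives $\theta$-saturatedness of $\textrm{gr}^F(\mathcal{R}_V(M_A(\beta)))$ directly, hence niceness, and the same presentation passed through $\textrm{gr}^V$ then $\textrm{gr}^F$ yields your claim (ii). To repair your write-up, either supply proofs of the two $\mathcal{R}_F$-side assertions you rely on, or switch to this $\mathcal{R}_V$-side argument.
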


Here, the assumption is that the closure of the variety defined by $I_A$ in the projective space $\mathbb{P}^n$ is Cohen-Macaulay.

\begin{proof}
First, note that the $\C[\partial,h]$-module $\C[\partial,h]/H(I_A)$ and the $\textrm{gr}^F(\C[\partial])$-module
$\textrm{gr}^F(\C[\partial])/\textrm{gr}^F(I_A)$ have same codimension and same projective dimension. Thus by \cite{eisenbud95}, Corollary 19.15, the Cohen-Macaulayness of the former is equivalent to that of the latter.

Also,
\[
\mathcal{C}(\textrm{gr}^F(\C[\partial])/\textrm{gr}^F(I_A);T)=
\mathcal{C}(\C[\partial,h]/H(I_A);T)
=\textrm{deg}(\C[\partial,h]/H(I_A))T^{n-d}
\]
and again by \cite{GKZ94}, Chapter 6, Theorem 2.3, 
$\textrm{deg}(\C[\partial,h]/H(I_A))=\mathrm{vol}(A)$.

For generic $\beta$, by \cite{SST}, Theorem 3.1.3 (with $w=(1,\dots,1)$), and \cite{oaku01b}, Theorem 2.5,
\[
H^V(H_A(\beta))=D[\theta]H^V(I_A)+\sum_i D[\theta]((Ax\partial)_i-\beta_i).
\]
By Lemma \ref{lemme7}, because of the Cohen-Macaulay assumption, $(Ax\xi)_1,\dots,(Ax\xi)_d$ is a regular sequence in $\textrm{gr}^F(D[\theta])/\textrm{gr}^F(I_A)=
\textrm{gr}^F(D[\theta])/\textrm{gr}^F(H^V(I_A))$. That implies that $H^V(I_A)$ and $((Ax\partial)_i-\beta_i)_i$ form an $F$-involutive base of $H^V(H_A(\beta))$ (see \cite{SST}, Proposition 4.3.2). Then 
\begin{eqnarray*}
\textrm{gr}^F(H^V(H_A(\beta))) & = &
\textrm{gr}^F(D[\theta])\textrm{gr}^F(H^V(I_A))+
\sum_i \textrm{gr}^F(D[\theta])(Ax\partial)_i.\\
& = & \textrm{gr}^F(D[\theta])\textrm{gr}^F(I_A)+
\sum_i \textrm{gr}^F(D[\theta])(Ax\partial)_i.
\end{eqnarray*}
Thus $\textrm{gr}^F(H^V(H_A(\beta)))$ is generated by elements independent of $\theta$; this implies that $\textrm{gr}^F(\mathcal{R}_V(M_A(\beta)))$ is $\theta$-saturated (consider the graduation given by the degree in $\theta$), which is equivalent to niceness by Lemma \ref{lemme8}.

We have again $\textrm{bigr}M_A(\beta)\simeq\textrm{gr}^F\textrm{gr}^V(M_A(\beta))$. With same arguments as above, we show that $\textrm{gr}^F\textrm{gr}^V(H_A(\beta))$ is generated by $\textrm{gr}^F(I_A)$ and $(Ax\xi)_i$ for generic $\beta$. We conclude the computation of the multidegree as in the proof of Theorem \ref{theo2}.
\end{proof}
To finish, let us give examples in the inhomogeneous case.


\paragraph{Example 5.} 
Let
$
A=\left(
\begin{array}{ccc}
0 & 1 & 3  \\
4 & 3 & 2
\end{array}
\right).
$
Then $I_A$ is generated by 
$\partial_1^7\partial_3^4-\partial_2^{12}$. The ring $\C[\partial,h]/H(I_A)$ is Cohen-Macaulay.
For any $\beta$, $M_A(\beta)$ is nicely bifiltered and
$\mathcal{C}_{F,V}(M_A(\beta);T_1,T_2)=12T_1^3+12T_1^2T_2.$

\paragraph{Example 6.} 
Let
$
A=\left(
\begin{array}{cccc}
-2&-1 & 0 & 1  \\
1 & 1 & 2 & 2
\end{array}
\right).
$
Then $I_A$ is generated by 
$\partial_2^2\partial_4^2-\partial_3^3, 
\partial_1\partial_4-\partial_2\partial_3,
\partial_1\partial_3^2-\partial_2^3\partial_4,
\partial_1^2\partial_3-\partial_2^4$.
The ring $\C[\partial,h]/H(I_A)$ is not Cohen-Macaulay.
For $\beta$ generic, $M_A(\beta)$ is nicely bifiltered and
\[
\mathcal{C}_{F,V}(M_A(\beta);T_1,T_2)=6T_1^4+12T_1^3T_2+6T_1^2T_2^2.
\]
We could check that the couple $\beta=(-1,2)$ is exceptional. In that case $M_A(\beta)$ is also nicely bifiltered and we have
\[
\mathcal{C}_{F,V}(M_A(\beta);T_1,T_2)=7T_1^4+16T_1^3T_2+12T_1^2T_2^2+4T_1T_2^3+T_2^4.
\]

Let us remark that in Examples 1--6, the formula of Theorems \ref{theo2} and \ref{theo3} holds for generic $\beta$, sometimes without the Cohen-Macaulay assumption.

\end{document}